\documentclass[10pt, english]{amsart}

\usepackage{amsmath,amssymb,enumerate, amscd}

\usepackage{xcolor}
\usepackage[T1]{fontenc}
\usepackage[all]{xy}

\usepackage{babel}
\usepackage{amstext}
\usepackage{amsmath}
\usepackage{amsfonts}
\usepackage{latexsym}
\usepackage{ifthen}

\usepackage{xypic}
\xyoption{all}
\pagestyle{plain}

\newcommand{\id}{{\rm id}}

\newcommand{\rk}{{\rm rk}}

\newtheorem{lemma1}{}[section]

\newenvironment{lemma}{\begin{lemma1}{\bf Lemma.}}{\end{lemma1}}
\newenvironment{example}{\begin{lemma1}{\bf Example.}\rm}{\end{lemma1}}

\newenvironment{theorem}{\begin{lemma1}{\bf Theorem.}}{\end{lemma1}}
\newenvironment{proposition}{\begin{lemma1}{\bf Proposition.}}{\end{lemma1}}
\newenvironment{corollary}{\begin{lemma1}{\bf Corollary.}}{\end{lemma1}}
\newenvironment{remark}{\begin{lemma1}{\bf Remark.}\rm}{\end{lemma1}}

\newenvironment{definition}{\begin{lemma1}{\bf Definition.}}{\end{lemma1}}

\newenvironment{conjecture}{\begin {lemma1}{\bf Conjecture.}}{\end{lemma1}}
\newenvironment{question}{\begin{lemma1}{\bf Question.}}{\end{lemma1}}

\newenvironment{remark*}{{\bf Remark.}}{}
\newenvironment{example*}{{\bf Example.}}{}
\newenvironment{assumption*}{{\bf Assumption.}}{}

\newcommand{\R}{\ensuremath{\mathbb{R}}}
\newcommand{\Q}{\ensuremath{\mathbb{Q}}}
\newcommand{\Z}{\ensuremath{\mathbb{Z}}}
\newcommand{\C}{\ensuremath{\mathbb{C}}}
\newcommand{\N}{\ensuremath{\mathbb{N}}}
\newcommand{\PP}{\ensuremath{\mathbb{P}}}

\usepackage{eurosym}

\newcommand{\merom}[3]{\ensuremath{#1:#2 \dashrightarrow #3}}

\newcommand{\holom}[3]{\ensuremath{#1:#2  \rightarrow #3}}
\newcommand{\fibre}[2]{\ensuremath{#1^{-1} (#2)}}

\makeatletter
\ifnum\@ptsize=0 \addtolength{\hoffset}{-0.3cm} \fi \ifnum\@ptsize=2 \addtolength{\hoffset}{0.5cm} \fi \sloppy

%script letters

\newcommand\sE{{\mathcal E}}

\newcommand\sF{{\mathcal F}}
\newcommand\sG{{\mathcal G}}

\newcommand\sI{{\mathcal I}}

\newcommand\sO{{\mathcal O}}

\newcommand\sV{{\mathcal V}}

\newcommand\ep{{\epsilon}}

\newcommand\sW{{\mathcal W}}

\newcommand{\Chow}[1]{\ensuremath{\mbox{\rm Chow}(#1)}}

\setcounter{tocdepth}{1}

 \setlength{\parindent}{0pt}
\setlength{\parskip}{\smallskipamount}

\title{Direct images of pseudoeffective cotangent bundles} 
\date{February 27, 2023}

\author{Junyan Cao}
\author{Andreas H\"oring}

\address{Junyan Cao, Universit\'e C\^ote d'Azur, CNRS, LJAD, France}
\email{junyan.cao@unice.fr}

\address{Andreas H\"oring, Universit\'e C\^ote d'Azur, CNRS, LJAD, France}
\email{Andreas.Hoering@unice.fr}

\subjclass[2010]{14J60, 14D06, 14E30, 37F75}
\keywords{cotangent bundle, positivity of vector bundles, holomorphic foliations, positivity of direct images, vector bundles on abelian varieties, nonvanishing conjecture, symmetric differentials}

\begin{document}

\begin{abstract}  
Let $A$ be an elliptic curve, and let $V_A$ be the Serre vector bundle on $A$.
A famous example of Demailly-Peternell-Schneider shows that
the tautological class of $V_A$ contains a unique closed positive current.
In this survey we start by generalising this statement to arbitrary compact K\"ahler manifolds. We then give an application to abelian fibrations $X \rightarrow Y$ where the total space $X$ has pseudoeffective cotangent bundle and raise some questions about nonvanishing properties of these bundles.
\end{abstract}

\newpage

\maketitle

\section{Introduction}

\subsection{An example of Demailly-Peternell-Schneider}

The notion of singular metrics on line bundles, introduced by Jean-Pierre Demailly \cite{Dem92b, Dem92}, are indispensable to have analytic analogues of algebraic notions of positivity of line bundles as shown by the following celebrated example:

\begin{example} \cite[Example 1.7]{DPS94} \label{example-DPS}
Let $A$ be an elliptic curve, and let
$$
0 \rightarrow \sO_A \rightarrow V_A \rightarrow \Omega_A \simeq \sO_A \rightarrow 0
$$
be an unsplit extension (i.e. $V_A$ is the Serre vector bundle). Let $\zeta_A
:=c_1(\sO_{\PP(V_A)}(1))$ be the tautological class on the projectivised bundle
$\PP(V_A)$. Then $\zeta_A$ is nef and the unique positive current in the cohomology class of $\zeta_A$ is the current of integration along the curve $\PP(\Omega_A) \subset \PP(V_A)$.
In particular the nef line bundle $\sO_{\PP(V_A)}(\zeta_A)$ does not admit a smooth metric with semipositive curvature.
\end{example}

The assumption that the extension is unsplit is equivalent to assuming that $H^0(A, V_A) \simeq \C$. It is quite remarkable that this algebraic condition leads to such a strong restriction on transcendent elements in the class $\zeta_A$.
Our first result is a natural generalisation of this example
to compact K\"ahler manifolds of arbitrary dimension:

\begin{theorem}\label{theorem-dpsgen}
Let $X$ be a compact K\"ahler manifold, and let 
\begin{equation} \label{extension}
0 \rightarrow \sO_X \rightarrow V\rightarrow   \bigoplus_{i=1}^{r} \sO_X \rightarrow 0,
\end{equation}
be an extension of vector bundles such that $H^0(X, V) \simeq \C$.
Let $T$ be a positive closed current in the tautological class $c_1\big(\sO_{\PP(V)}(1)\big)$.
Then $T$ is the current of integration along the divisor $\PP(\bigoplus_{i=1}^{r} \sO_X ) \subset \PP(V)$.
\end{theorem}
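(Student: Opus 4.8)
The plan is to reduce the statement to a Liouville-type property on the complement of $D$, and then to exploit a natural additive group action coming from the filtration of $V$.

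First I would fix conventions and reduce the problem. Write $\pi\colon \PP(V)\to X$, set $\zeta:=c_1(\sO_{\PP(V)}(1))$ and $D:=\PP(\bigoplus_{i=1}^r\sO_X)$. Since $\det V=\sO_X$, the sequence \eqref{extension} and its dual exhibit $V$ as numerically flat, so $\zeta$ is nef. From $\pi_*\sO_{\PP(V)}(1)=V$ and $H^0(X,V)\simeq\C$ one gets $h^0(\PP(V),\sO_{\PP(V)}(1))=1$; the unique section $s$ (up to scale) is the one induced by $\sO_X\hookrightarrow V$, and a local computation gives $\mathrm{div}(s)=D$ and $[D]=\zeta$, so $D$ is the only effective divisor in $\zeta$. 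Given a positive closed current $T$ in $\zeta$, I would write its Siu decomposition along $D$ as $T=\nu\,[D]+T'$, where $\nu\ge 0$ is the generic Lelong number of $T$ along $D$ and $T'\ge 0$ has vanishing generic Lelong number along $D$. As $T'$ is a positive closed current, its class $(1-\nu)\zeta$ is pseudoeffective, and pairing with a Kähler class forces $\nu\le 1$. Hence it suffices to prove $T'=0$: then $T=\nu[D]$, and comparing classes gives $\nu=1$. Equivalently, I must show $T$ carries no mass on $U:=\PP(V)\setminus D$, since a positive closed current supported on the irreducible divisor $D$ is a multiple of $[D]$.

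Next I would set up the group action and record the algebraic input. Let $h_T$ be the singular metric on $\sO_{\PP(V)}(1)$ with curvature $T$; on $U$ the section $s$ is nowhere zero, so $g:=-\log|s|^2_{h_T}$ is a plurisubharmonic function on $U$ with $i\partial\pp g$ proportional to $T|_U$, and proving $T'=0$ amounts to proving that $g$ is constant. Two structures on $U$ are crucial. First, every global homomorphism in $\Hom(\bigoplus_{i=1}^r\sO_X,\sO_X)\simeq\C^r$ yields a unipotent automorphism $\mathrm{id}+N$ of $V$ preserving the flag $\sO_X\subset V$, hence an automorphism of $\PP(V)$ fixing $D$ pointwise; these assemble into a holomorphic action of $G:=\C^r$ that is free and transitive on the fibres of $\pi|_U$, making $U\to X$ a $G$-torsor with class $(e_1,\dots,e_r)\in H^1(X,\sO_X)^{\oplus r}$. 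As $G$ is connected it acts trivially on $H^2(\PP(V))$, so each translate $t_a^{*}T$ is again a positive closed current in $\zeta$. Second, the hypothesis $H^0(X,V)\simeq\C$ says that the $e_i$ are linearly independent in $H^1(X,\sO_X)$, and I expect a cohomology computation to propagate this to all symmetric powers, giving $h^0(X,\Sym^m V)=1$ for every $m$; equivalently $U$ carries no nonconstant holomorphic function, so any global pluriharmonic function on $U$ (locally the real part of a holomorphic function, and constant on fibres after descent to compact $X$) is constant.

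Finally, the main step is to prove that $g$ is constant on $U$, and this is where I expect the real difficulty. My plan is to average over the amenable group $G$: for a Følner sequence of balls $B_k\subset\C^r$, the currents $T_k:=|B_k|^{-1}\int_{B_k}t_a^{*}T\,d\lambda(a)$ are positive closed in $\zeta$ with uniformly bounded mass, a weak limit $T_\infty$ is $G$-invariant, its potential on $U$ is constant on fibres and descends to a plurisubharmonic function on compact $X$, hence $T_\infty=[D]$. Thus $[D]$ is the unique $G$-invariant current, and the translates $t_a^{*}T$ push the fibrewise mass of $T|_U$ onto $D$. The hard part is to convert this into an estimate on $g$ itself. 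The natural tool is the sub-mean-value inequality $g(u)\le |B_k|^{-1}\int_{B_k}g(t_a u)\,d\lambda(a)$, valid because $a\mapsto g(t_a u)$ is plurisubharmonic on $\C^r$; the obstacle is that, precisely when $T$ has mass on $U$, the right-hand side drifts to $+\infty$ as the mass escapes to $D$, so the inequality must be renormalised. I expect the decisive input to be the vanishing $h^0(\Sym^m V)=1$, which should rule out logarithmic growth of $g$ along the fibres and thereby bound $g$ from above; once $g$ is bounded above on $U$, the Liouville theorem for plurisubharmonic functions on $\C^r$ forces $g$ to be fibrewise constant, and descent to compact $X$ concludes. (For $r=1$ this upper bound is automatic, since $\zeta|_D$ has degree zero on the elliptic curve $D$, which is the mechanism behind the original example; the genuinely new difficulty in higher relative dimension is exactly this boundedness, equivalently that the generic Lelong number of $T$ along $D$ equals $1$.)
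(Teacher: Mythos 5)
Your reduction (Siu decomposition $T=\nu[D]+T'$, $\nu\le 1$ by pairing, and the support theorem to reduce to showing $T$ has no mass on $U=\PP(V)\setminus D$) is sound, and your structural observations are correct and in fact parallel the paper's setup: the unipotent automorphisms $\id+N$ with $N$ factoring through $V\twoheadrightarrow\sO_X^{\oplus r}\xrightarrow{\phi}\sO_X\hookrightarrow V$ do make $U\to X$ a $\C^r$-torsor with class $(\eta_1,\dots,\eta_r)$, which is the global counterpart of the flat local-system structure the paper extracts from numerical flatness (Remark \ref{remark-numerically-flat}, Lemma \ref{newbase}). Likewise, your expected statement $h^0(X,S^mV)=1$ is exactly the paper's Lemma \ref{uniquesects} and Corollary \ref{uniquesect}; your ``cohomology computation'' is not carried out, but it is plausible here precisely because the subbundle has rank one, so the coboundaries involve the $\eta_i$ only linearly --- note that for more general trivial-by-trivial extensions the analogous statement fails (Example \ref{example-bad-case}), so this step is less formal than you suggest.

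The genuine gap is your final step, which is where the entire transcendental content of the theorem sits. You must show that the generic Lelong number of $T$ along $D$ equals $1$ and that the residual current vanishes; instead you offer an averaging scheme whose sub-mean-value inequality, as you yourself note, degenerates exactly when $T'\neq 0$, together with the hope that $h^0(X,S^mV)=1$ ``rules out logarithmic growth of $g$ along the fibres.'' That implication is unsupported and essentially circular: boundedness of $g$ from above along the fibres is equivalent to the conclusion you are trying to reach, and the vanishing of nonconstant sections of all $S^mV$ controls only the algebraic positivity, i.e.\ $\kappa(\PP(V),\sO_{\PP(V)}(1))=0$ --- the introduction of the paper stresses precisely that this classical information does not suffice to constrain the currents in the class. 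A psh function on $U$ with fibrewise logarithmic growth is a transcendental object that no section-counting argument excludes directly. The paper closes this gap in Proposition \ref{exvec} with quite different tools: Lemma \ref{singlocus} shows, via the positivity of the $L^2$ metric on direct images \cite{PT18} and Wu's theorem \cite{Wu22}, that $\pi_*\big(K_{\PP(V)/X}\otimes\sO_{\PP(V)}(m)\otimes\I(m_1h)\big)$ is numerically flat, so the horizontal divisorial parts of $T$ are flat and hence multiples of $[E]$ by Corollary \ref{uniquesect}; then, assuming the coefficient $a<1$, the rescaled residual current has zero Lelong numbers along very general lines through $\PP(\sO_X)$, the Păun--Takayama positivity applies to the direct image under the projection $\widehat\PP(V)\to\PP(V_1)$, and the flatness criterion \cite[Cor 2.9 (b)]{CP17} forces a metric-preserving splitting of the induced nonsplit extension \eqref{extennonsplit} off a divisor, extended across by Hartogs --- a contradiction, with the rank reduced by induction via this projection. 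Your proposal contains no substitute for this direct-image mechanism, so the proof is incomplete at its decisive step.
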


While rather classical arguments allow to see that
$\kappa(\PP(V), \sO_{\PP(V)}(1))=0$, proving that the cohomology class contains no other positive closed currents requires direct image techniques developed only in the last ten years, cf. Proposition \ref{exvec}.
In fact we will prove a slightly more general statement describing currents in the tautological class whenever we consider an extension of the type  \eqref{extension}, cf. Theorem \ref{genDPS}. This description allows us to obtain a positivity result for the ``direct image'' 
of the cotangent bundle:

\begin{theorem}  \label{theorem-direct-image}
	Let $\holom{f}{X}{Y}$ be a fibration from a compact K\"ahler manifold $X$ onto
	smooth compact curve $Y$.
	Assume that the general fibre $F$ is a complex torus,  denote by $\sV \subset \Omega_X$ the saturation of $f^* f_* \Omega_X \rightarrow \Omega_X$.  If $\Omega_X$ is a pseudoeffective vector bundle, then $\sV$ is pseudoeffective.
\end{theorem}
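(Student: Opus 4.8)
The plan is to produce a closed positive current in the tautological class $c_1\big(\sO_{\PP(\sV)}(1)\big)$, reading it off from the positivity of $\Omega_X$ by means of the fibrewise rigidity supplied by Theorem \ref{genDPS}. First I would fix a closed positive current $T\in c_1\big(\sO_{\PP(\Omega_X)}(1)\big)$, which exists because $\Omega_X$ is pseudoeffective, and set $\pi\colon \PP(\Omega_X)\to X$ and $p:=f\circ\pi\colon \PP(\Omega_X)\to Y$. Over the open set $Y_0\subseteq Y$ where $f$ is a smooth fibration by tori, the relative cotangent sequence $0\to f^*\Omega_Y\to \Omega_X\to \Omega_{X/Y}\to 0$ restricts on a fibre $F=f^{-1}(y)$, with $n:=\dim_{\C}F$, to an extension
\[
0\to \sO_F\to \Omega_X|_F\to \sO_F^{\oplus n}\to 0,
\]
since $\Omega_F\cong \sO_F^{\oplus n}$ and $f^*\Omega_Y|_F\cong \sO_F$. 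This is exactly an extension of the shape \eqref{extension}, and by base change $\sV|_F$ is, up to saturation, the image of the evaluation $H^0(F,\Omega_X|_F)\otimes\sO_F\to \Omega_X|_F$, i.e. the subsheaf generated by the global holomorphic $1$-forms on $F$.

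Second, for a general $y\in Y_0$ the slice $T|_{\pi^{-1}(F)}$ is a well-defined closed positive current in $c_1\big(\sO_{\PP(\Omega_X|_F)}(1)\big)$, since $\pi^{-1}(F)$ is not contained in the polar locus of local potentials of $T$. Theorem \ref{genDPS} then applies to the extension above and determines $T|_{\pi^{-1}(F)}$ completely in terms of the extension data: the slice is rigid and concentrated along the subvariety of $\PP(\Omega_X|_F)$ attached to the distinguished subsheaf $\sV|_F$, carrying no mass in the directions transverse to $\sV|_F$. This rigidity is the mechanism that converts pseudoeffectivity of $\Omega_X$ into information about $\sV$; here the scalar hypothesis $H^0\simeq\C$ of Example \ref{example-DPS} is replaced by the structure of the abelian fibration, which is what forces $\Omega_X|_F$ into the form \eqref{extension} on every general fibre.

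Third, I would globalise. Because the description holds uniformly for all general $y\in Y_0$, the vertical behaviour of $T$ over $Y_0$ is pinned down, and the part of $T$ transverse to the fibres is governed by the relative direct image of $f$. Feeding this into the positivity of direct images of Proposition \ref{exvec}, together with the semipositivity of the Hodge sheaf $f_*\Omega_{X/Y}$ of an abelian fibration, I would assemble a closed positive current representing $c_1\big(\sO_{\PP(\sV)}(1)\big)$, first over $p^{-1}(Y_0)$ and then extend it across the finitely many special fibres; the extension across a hypersurface of a current with controlled mass is standard once the mass is bounded.

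The main obstacle I anticipate is precisely this last globalisation over $Y\setminus Y_0$. The fibrewise theorem gives very precise control of $T$ over $Y_0$, but one must bound the Lelong numbers and the mass of $T$ along the singular fibres $p^{-1}(Y\setminus Y_0)$ to guarantee that the candidate current on $\PP(\sV)$ neither loses mass nor acquires negative contributions there, and to check that the \emph{saturation} $\sV$ (which may differ from the direct sum of $f^*\Omega_Y$ with the Hodge part exactly over the bad fibres) still inherits pseudoeffectivity. Controlling these boundary contributions, rather than establishing the fibrewise statement, is where I expect the genuine technical work to lie.
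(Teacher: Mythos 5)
Your fibrewise reduction is exactly the paper's: restrict to a general fibre $A$, observe that the cotangent sequence gives an extension $0\to\sO_A\to\Omega_X\otimes\sO_A\to\sO_A^{\oplus n}\to 0$ of the shape \eqref{extension} with $\sV\otimes\sO_A$ trivial (the paper justifies the triviality, and the fact that $f^*f_*\Omega_X$ is already saturated near $A$, via numerical flatness of $\Omega_X\otimes\sO_A$), and apply Theorem \ref{genDPS}. But there is a genuine gap at the globalisation step, precisely where you say you expect the work to lie. Fibrewise restrictions of $T$ do not determine a closed positive current on $\PP(\sV)$ over $p^{-1}(Y_0)$ --- you would need local potentials that glue, and nothing in your outline produces them; your proposed ingredients do not supply this either, since Proposition \ref{exvec} is an ingredient \emph{inside} the proof of Theorem \ref{genDPS} (it settles the case $a=0$), not a tool for assembling a global current, and the semipositivity of the Hodge sheaf $f_*\Omega_{X/Y}$ plays no role in the argument. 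A further inaccuracy: for $a\geq 1$ the conclusion of Theorem \ref{genDPS} is not that the slice is concentrated along a subvariety attached to $\sV|_F$; the current is $\tau^* pr^* T$, a pull-back from $\PP^a$, which typically has full support. The correct statement, and the one that matters, is that the local potentials depend only on the $\sV^*$-directions.

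The paper's mechanism dissolves the difficulty you anticipate. One works with the global psh function $\varphi=\log h^*$ on the total space of $\Omega_X^*$, where $h$ is a singular metric with $i\Theta_h(\sO_{\PP(\Omega_X)}(1))\geq 0$. Off the codimension-two set $Z$ where $\sV\subset\Omega_X$ fails to be a subbundle, the dual map $i:\Omega_X^*\to\sV^*$ is a surjective bundle morphism whose restriction to a general fibre $A$ is exactly the projection $pr$ of Theorem \ref{genDPS}; hence the fibrewise factorisation of $\varphi|_{\Omega_X^*\otimes\sO_A}$ globalises immediately: $\varphi$ is constant on the fibres of $i$ over the Zariski-open union $U$ of good fibres, so $\varphi=i^*\psi$ with $\psi$ psh on $\sV^*|_U$. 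The extension of $\psi$ across $(X\setminus Z)\setminus U$ --- i.e.\ over the singular fibres --- requires no Lelong-number or mass estimates at all: since $i$ is surjective there and $\varphi$ is locally bounded above, $\psi$ is locally bounded above near the bad set and therefore extends as a psh function to $\sV^*|_{X\setminus Z}$, yielding a semipositively curved singular metric on $\sO_{\PP(\sV)}(1)$ over $X\setminus Z$. The remaining codimension-two locus $Z$, and with it your worry about the saturation, is absorbed by passing to the birational model $P$ of Definition \ref{definitiontautological}: the pulled-back metric extends with $i\Theta_{h_1}(\sO_P(1))+C_1[D]\geq 0$, and Remark \ref{anav} gives pseudoeffectivity of $\sV$. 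So the fibrewise input you identified is indeed the heart of the proof, but the globalisation is a soft descent-and-extension argument for psh potentials on $\Omega_X^*$, not a current-extension problem along the singular fibres.
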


This statement is surprising for two reasons: in general positivity properties are inherited by quotient sheaves, not subsheaves. Moreover it is not difficult 
to find examples where the direct image $f_* \Omega_X$ is an anti-ample vector bundle (cf. Example \ref{examplepushisotrivial}), so the positivity of $\sV$ only appears after saturating the image. Theorem \ref{theorem-direct-image} has no obvious
generalisation to fibres of arbitrary Kodaira dimension: if $F$ is projective manifold such that $\Omega_F$ is ample but $q(F)=0$ (e.g. $F \subset \PP^N$ is a complete intersection
of sufficiently high degree and codimension \cite{BD18, Xie18}), the product $X:= \PP^1 \times F$
has pseudoeffective cotangent bundle, but $\sV = p_{\PP^1}^* (p_{\PP^1})_* \Omega_X
\simeq  p_{\PP^1}^* \Omega_{\PP^1}$ is not pseudoeffective, where $p_{\PP^1}: X \to \PP^1$ is the natural projection.

Since Theorem \ref{theorem-direct-image} is essentially proven by restricting to a general fibre one is tempted to believe that the proof carries over without changes to a higher-dimensional base $Y$. However the proof of Theorem \ref{genDPS} relies  on the form of the extension \eqref{extension}, in fact Example \ref{example-bad-case} shows that the statement of Theorem \ref{genDPS} is already false for arbitrary extensions of the form 
$$
0 \rightarrow \sO_X^{\oplus 2} \rightarrow V  \rightarrow \sO_X^{\oplus 2}
 \rightarrow 0.
$$
In order to make further progress in this direction it seems necessary to have an argument that actually uses that the extension arises from the cotangent sequence of the fibration $f$.

\subsection{Nonvanishing conjecture for cotangent bundle}

In \cite{HP20} the second named author and Peternell introduced the following
conjecture:

\begin{conjecture} \label{conj:nv} 
Let $X$ be a projective manifold\footnote{In \cite{HP20} the conjecture is stated for varieties with klt singularities, in view of possible reduction steps via the MMP it is indeed essential to work in this generality. In this survey we work only on smooth spaces in order to preserve the simplicity of the exposition.}, and let $1 \leq q \leq \dim X$. Then $\Omega^{q}_X$ is pseudoeffective
if and only if
for some positive integer $m$ one has  
$$
H^0(X,S^{m}\Omega^{q}_{X}) \ne 0.
$$ 
\end{conjecture}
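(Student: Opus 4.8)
The plan is to treat the two implications separately, since they differ enormously in difficulty. Throughout set $P := \PP(\Omega^{q}_X)$ with projection $\pi : P \to X$ and tautological bundle $L := \sO_P(1)$, normalised in Grothendieck's convention so that $\pi_* \sO_P(m) = S^{m}\Omega^{q}_X$ for every $m \geq 1$; recall that, by definition, $\Omega^{q}_X$ is pseudoeffective precisely when $c_1(L)$ is a pseudoeffective class on $P$.

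The ``if'' direction is immediate and uses only the projectivity of $X$. Suppose $H^0(X, S^{m}\Omega^{q}_X) \ne 0$ for some $m$. By the projection formula this equals $H^0(P, mL)$, so a nonzero section produces an effective divisor in $|mL|$. Hence $c_1(L)$ lies in the effective cone, a fortiori in its closure, and therefore $\Omega^{q}_X$ is pseudoeffective.

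The content of the conjecture is the converse: that pseudoeffectivity forces a section of some symmetric power. Translated to $P$, the hypothesis says that $L$ carries a closed positive current $T \in c_1(L)$, equivalently a singular hermitian metric of semipositive curvature, and the goal is to show $\kappa(P, L) \geq 0$. I would attack this through the Boucksom divisorial Zariski decomposition $T = \sum_i \nu_i(T)\, [D_i] + R$, attempting to extract an effective $\Q$-divisor from the singular part and to control the residual positive part $R$ by its numerical dimension. The generalised Demailly--Peternell--Schneider situation of Theorem \ref{theorem-dpsgen} is exactly the extremal case in which $T$ is rigid, a single current of integration with $\kappa = 0$; this both explains why the statement is delicate and suggests that one should expect the section to come from the divisorial part, the positive part contributing only in degenerate configurations. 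More structurally, one would run a minimal model program adapted to the fibration $\pi$ and reduce, via an Iitaka-type fibration and abundance/numerical-dimension machinery, to the cases where the numerical dimension of $L$ is either maximal or zero.

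The main obstacle is that this converse already contains the classical nonvanishing conjecture of the minimal model program as the special case $q = \dim X$: there $\Omega^{\dim X}_X = K_X$ and $S^{m}\Omega^{\dim X}_X = K_X^{\otimes m}$, so the assertion becomes ``$K_X$ pseudoeffective $\Rightarrow \kappa(X) \geq 0$'', which is open in general. Consequently an unconditional proof is out of reach with present methods, and the realistic target is twofold: either (i) to deduce the conjecture conditionally on MMP-nonvanishing, or (ii) to establish it in the structured geometries of this paper, for instance when $f : X \to Y$ is a fibration with torus fibres, where Theorem \ref{theorem-direct-image} already furnishes a pseudoeffective subsheaf $\sV \subset \Omega_X$ and the abelian-fibration structure can be exploited to manufacture symmetric differentials directly.
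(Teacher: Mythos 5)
You should first be clear about the status of this statement: it is Conjecture \ref{conj:nv}, and the paper contains no proof of it --- it is the open nonvanishing conjecture for cotangent bundles from \cite{HP20}, which for $q=\dim X$ specialises to the classical nonvanishing conjecture, exactly as you observe. So there is no paper proof to compare your attempt against; what the paper offers is partial evidence in structured situations (Corollary \ref{corollary-rank-one}, and Propositions \ref{proposition-projectively-flat}, \ref{proposition-base-positivity} and \ref{proposition-dichotomy} towards Conjecture \ref{conjecture-dichotomy}). Your ``if'' direction is correct and standard: a nonzero section of $S^m\Omega^q_X = \pi_*\sO_P(m)$ gives an effective divisor in $|mL|$, its multiples verify Definition \ref{def:reflexive} (equivalently, the tautological class is pseudoeffective and one applies Lemma \ref{lemmapseff}); since $X$ is smooth and $\Omega^q_X$ locally free, no saturation or resolution subtleties arise.

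For the converse your sketch is honest about being conditional, but its first proposed mechanism would fail as stated. ``Pseudoeffective implies $\kappa \geq 0$'' is false for arbitrary line bundles on $P$ --- a numerically trivial non-torsion line bundle on an elliptic curve is already a counterexample --- and the divisorial Zariski decomposition does not rescue this: the residual part $R$ can be an irrational class with no effective multiple, and the divisorial part $\sum_i \nu_i(T)[D_i]$ has real coefficients and may be empty, so no section can be ``extracted'' from it in general. Any viable argument must exploit the specific structure of $\Omega^q_X$, and this is precisely what the paper's machinery does: Theorem \ref{theorem-dpsgen} is not merely a rigid extremal case to be excluded, it is the engine --- via numerical flatness of $\Omega_X \otimes \sO_A$ on abelian fibres and the $\pi_1$-representation of Lemma \ref{newbase} --- that converts positive currents into flat algebraic data (Lemma \ref{uniquesects}) and yields Theorem \ref{theorem-direct-image} and Corollary \ref{corollary-rank-one}. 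Your option (ii) coincides with the paper's actual programme; your option (i), reduction to MMP-nonvanishing, is not carried out anywhere in the paper and would itself require a substantial new idea, since sections of $S^m\Omega^q_X$ for $q < \dim X$ are not directly governed by $K_X$.
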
 

This conjecture generalises the well-known nonvanishing conjecture for the canonical class, but is far from being obvious even for surfaces of general type. A general strategy
to attack this problem is to split it into two parts: varieties of (log-)general type
and varieties that admit fibrations with fibres of non-positive Kodaira dimension.
In this survey we single out a situation that is geometrically rather simple, but leads
to some interesting problems about the positivity of vector bundles:

\begin{question} \label{question-curve}
Let $X$ be a projective manifold such that the cotangent bundle $\Omega_X$ is pseudoeffective. Assume that $K_X$ is semiample with numerical dimension one, so the Iitaka fibration is a morphism $\holom{f}{X}{C}$ onto a curve $C$.

What can we say about the fibration? Do we have 
$$
H^0(X, S^k \Omega_X) \neq 0
$$ 
for some $k \in \N$?
\end{question}

If the curve $C$ has genus at least one, or more generally $q(X) \neq 0$,
this explains the pseudoeffectivity of $\Omega_X$ and nothing else can be said.
Thus we can assume that $q(X)=0$, in particular $C \simeq \PP^1$.
Applying the Beauville-Bogomolov decomposition to the general fibre (cf. Lemma \ref{lemmafactorisation}), we see that most difficult case 
is when the general fibre $F$ is an abelian variety.
In this case we can apply Theorem \ref{theorem-direct-image} to get

\begin{corollary} \label{corollary-rank-one}
Let $X$ be a projective manifold such that the cotangent bundle $\Omega_X$ is pseudoeffective. Assume that $X$ admits an abelian fibration
$\holom{f}{X}{C}$ onto a curve $C$.
If $f_* \Omega_X$ has rank one, there exists a finite \'etale cover $X' \rightarrow X$
such that $q(X') \neq 0$. In particular one has $H^0(X, S^k \Omega_X) \neq 0$ for some $k \in \N$.
\end{corollary}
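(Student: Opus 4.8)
Since a fibration onto a curve $C$ of genus $\geq 1$ already yields $q(X) \geq g(C) \geq 1$ by pulling back holomorphic $1$-forms, in that case we may take $X' = X$; so we may assume $g(C) = 0$, i.e. $C \simeq \PP^1$. The plan is to feed the pseudoeffectivity produced by Theorem \ref{theorem-direct-image} into the orbifold structure of the base. By that theorem the saturation $\sV \subset \Omega_X$ of $f^* f_* \Omega_X$ is pseudoeffective, and since $f_* \Omega_X$ has rank one, $\sV$ is a pseudoeffective line bundle. On the general fibre $F$ the rank-one hypothesis means $h^0(F, \Omega_X|_F) = 1$; as the unique section spans the conormal direction $N^*_{F/X} \cong \sO_F$ inside the extension $0 \to N^*_{F/X} \to \Omega_X|_F \to \Omega_F \to 0$, we get $\sV|_F \cong \sO_F$, so $\sV$ is numerically trivial on fibres. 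Moreover, because $\Omega_C \hookrightarrow f_* \Omega_X$ is a generically bijective map of rank-one sheaves, $\sV$ is exactly the saturation of the image of the natural map $f^* \Omega_C \to \Omega_X$.

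Next I would identify this saturation with the pullback of the orbifold canonical bundle of the base. Writing the multiple fibres of $f$ as $f^* p_j = m_j F'_j$ with $F'_j$ reduced, a local computation of $df$ (which vanishes to order $m_j - 1$ along $F'_j$, while reduced singular fibres contribute nothing in codimension one) shows that the saturation of $f^* \Omega_C$ inside $\Omega_X$ is obtained by adding $(m_j - 1) F'_j$ along each multiple fibre. Hence, as $\Q$-divisors, $\sV = f^* \Omega_C \otimes \sO_X\big(\sum_j (m_j - 1) F'_j\big) = f^*(K_{\PP^1} + \Delta)$, where $\Delta := \sum_j (1 - \tfrac{1}{m_j}) p_j$ is the orbifold base divisor and we have used $m_j F'_j = f^* p_j$.

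Pseudoeffectivity now produces a numerical inequality on the base. Setting $D := K_{\PP^1} + \Delta$, so that $\sV = f^* D$ as a $\Q$-line bundle, I would pair $\sV$ with a general movable complete-intersection curve $B$, for which $f|_B : B \to \PP^1$ is finite surjective: by the projection formula $0 \leq \sV \cdot B = \deg(f|_B) \cdot \deg D$, whence $\deg(K_{\PP^1} + \Delta) \geq 0$. Every bad (non-developable) orbifold on $\PP^1$ has strictly negative orbifold degree, so $(\PP^1, \Delta)$ is a good orbifold and admits a finite Galois orbifold cover $C' \to \PP^1$, ramified to order exactly $m_j$ over each $p_j$, with $g(C') \geq 1$.

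Finally, Kawamata's covering trick turns this into a finite \'etale cover $X' \to X$: one normalises $X \times_{\PP^1} C'$, and because $C' \to \PP^1$ ramifies precisely to the multiplicities of the multiple fibres, the induced map $X' \to X$ is \'etale, while $X' \to C'$ is again an abelian fibration over a curve of genus $\geq 1$. Thus $q(X') \geq g(C') \geq 1 \neq 0$. To deduce the last assertion, replace $X'$ by the Galois closure of $X' \to X$ with group $G$ (still \'etale, still $q \neq 0$), pick $0 \neq \omega \in H^0(X', \Omega_{X'})$, and note that $\prod_{\sigma \in G} \sigma^* \omega$ is a nonzero $G$-invariant section of $S^{|G|} \Omega_{X'}$, hence descends to a nonzero element of $H^0(X, S^{|G|} \Omega_X)$. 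I expect the main obstacle to be the second step: rigorously controlling the saturation of $f^* \Omega_C$ in $\Omega_X$ for higher-dimensional abelian fibres, i.e. matching the divisorial discrepancy exactly with the multiple-fibre divisor, and then arranging the cover to be genuinely \'etale over $X$ rather than merely \'etale in codimension one.
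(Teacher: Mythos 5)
Your high-level strategy is the right one, and it is in the spirit of what the paper intends (the corollary is presented as a direct consequence of Theorem \ref{theorem-direct-image}, generalising \cite[Thm.1.2a)]{HP20}; the covering mechanism you invoke is exactly the one used in the paper's proof of Lemma \ref{lemmafundamentalgroup}). Your reduction to $C\simeq\PP^1$, the identification of $\sV$ with the saturation of $f^*\Omega_C$, the triviality of $\sV|_F$, and the final descent step (Galois closure, then $\prod_{\sigma\in G}\sigma^*\omega\in H^0(X',S^{|G|}\Omega_{X'})^G$ descends to $X$) are all correct. But there is a genuine gap exactly where you suspect it, and it is not a technicality. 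The identity $\sV=f^*(K_{\PP^1}+\Delta)$ is false in general, because singular fibres of an abelian fibration need not have the form $m_jF'_j$ with $F'_j$ reduced. Writing $f^*p_j=\sum_k a_{jk}D_{jk}$, the saturation of $f^*\Omega_{\PP^1}$ in $\Omega_X$ is $f^*K_{\PP^1}+\sum_{j,k}(a_{jk}-1)D_{jk}$, and the correction divisor is in general not a $\Q$-pullback from the base: already for an elliptic fibration an $I_0^*$-fibre $2D_0+D_1+\cdots+D_4$ contributes its handle $D_0$, which is not $\Q$-proportional to the fibre class (this is visible in Example \ref{examplepushisotrivial}, where $\sV=f^*\omega_{\PP^1}(D)$ with $D$ the sum of the handles). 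Consequently your movable-curve pairing only yields $\deg\bigl(K_{\PP^1}+\sum_j(1-\tfrac{1}{m_j'})p_j\bigr)\geq 0$ with $m_j'$ governed by the largest component multiplicity, whereas Kawamata's trick requires branching orders dividing the multiplicity of \emph{every} component of $f^*p_j$, i.e. $m_j=\gcd_k a_{jk}$. Since the max-multiplicity orbifold divisor dominates the gcd one, your inequality is too weak to feed the covering construction; Example \ref{examplepushisotrivial} (with $2g+2$ fibres of type $I_0^*$, all of gcd-multiplicity one) has $\deg(K_{\PP^1}+\Delta_{\max})=g-1>0$ while admitting no \'etale cover with $q\neq 0$, so the inference ``max-multiplicity degree $\geq 0$ implies good orbifold cover with the needed branching'' is unsound.

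Closing this gap requires an actual argument, for which the paper's toolkit points the way. Take the divisorial Zariski decomposition $\sV=P+N$; since $\sV|_F$ is trivial, Lemma \ref{lemmanumericalclassfibres} gives $P\equiv\lambda F$ with $\lambda\geq 0$, so $\sum_{j,k}(a_{jk}-1)D_{jk}-N\equiv(\lambda+2)F$. A Zariski-lemma analysis on each fibre (after cutting down to a surface) shows the fibre over $p_j$ contributes at most $(1-\tfrac{1}{\min_k a_{jk}})$ to this count; in particular any fibre possessing a reduced component contributes nothing. For elliptic surfaces Kodaira's classification \cite{Kod} guarantees that every non-multiple singular fibre has a reduced component, so only genuine multiple fibres count and one recovers the gcd-orbifold inequality $\deg(K_{\PP^1}+\Delta_{\gcd})\geq 0$ needed for your cover -- this is why \cite{HP20} can conclude, and the dependence on Kodaira's table is precisely what the paper flags as the hard point. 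For higher-dimensional abelian fibres, fibres all of whose components are non-reduced but with $\gcd_k a_{jk}<\min_k a_{jk}$ are not excluded by this computation and must be handled separately (note that the paper deliberately assumes irreducible fibres in Proposition \ref{proposition-dichotomy} to avoid exactly this). Finally, be aware that the alternative route through Lemma \ref{lemmafundamentalgroup} alone does not save you: it gives either $\tilde q(X)>0$ or $\pi_1(X)$ finite, and in the finite case Lemma \ref{lemmanumericalclassfibres} only yields $\kappa(X,\sV)\geq 0$, i.e. the ``in particular'' nonvanishing, not the \'etale cover with $q\neq 0$ -- so your orbifold approach is indeed the necessary one, but it is incomplete at precisely the step you flagged.
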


For an elliptic fibration the condition that $f_* \Omega_X$ has rank one simply means that
$f$ is not isotrivial, so this generalises \cite[Thm.1.2a)]{HP20}.
The opposite case where $\rk f_* \Omega_X = \dim X$ corresponds to the situation
where $f$ is isotrivial: in this case we have a natural splitting
of $f_* \Omega_X$  (cf. Lemma \ref{lemma-splitting-image}), this reflects the property that $X$ is, after finite base change, birational to a product.
We expect that this ``product structure'' allows to show the nonvanishing by an explicit computation, in particular it should be possible to verify Conjecture \ref{conjecture-dichotomy} in this situation.

This leads us to the intermediate case where $1< \rk f_* \Omega_X < \dim X$
(see Subsection \ref{subsection-example} below for an example).
Since $\sV \subsetneq \Omega_X$, we can consider the distribution
$$
\sF := (\Omega_X/\sV)^* \subset T_X
$$
which is in fact a linear foliation over the smooth locus of the abelian fibration $f$. 

\begin{question}
Let $X$ be a projective manifold such that the cotangent bundle $\Omega_X$ is pseudoeffective. Assume that $X$ admits an abelian fibration
$\holom{f}{X}{C}$ onto a curve $C$. If $1< \rk f_* \Omega_X < \dim X$, is the foliation
$\sF \subset T_X$ algebraically integrable, i.e. is the general leaf an abelian variety?
\end{question}

It follows from the proof of Theorem \ref{theorem-direct-image} that the 
quotient $\Omega_X/\sV$ is contained in the singular locus of every positive current
in the tautological class $\zeta_X$. This seems to indicate that the cotangent sheaf $\sF^*$ of the foliation satisfies the condition of the criterion of Campana-P\v aun \cite{CP19, Dru18}, but e.g. for $\rk \sF=1$ this is never true by Brunella's theorem \cite{Bru06} (cf. also Example \ref{example-nef}). 
Therefore it would be very interesting to have an algebraic integrability criterion that takes into account not only the positivity of the foliation $\sF$, but also its embedding $\sF \subset T_X$.

If one wants to exploit Theorem \ref{theorem-direct-image} more thoroughly,
we can try to obtain stronger properties for the pseudoeffective subsheaf $\sV \subset \Omega_X$. 

\begin{conjecture} \label{conjecture-dichotomy}
Let $X$ be a projective manifold such that the cotangent bundle $\Omega_X$ is pseudoeffective. Assume that $X$ admits an abelian fibration
$\holom{f}{X}{C}$ onto a curve $C$. Let $F$ be a general fibre of $f$.
Then $\sV \subset \Omega_X$ contains a non-zero  reflexive subsheaf $\sW$ such that one of the following holds:
\begin{itemize}
\item $\sW$ is numerically projectively flat (cf. Definition \ref{definition-projectively-flat}); 
\item for some $\varepsilon>0$ the $\Q$-twist $\sW \otimes\sO_X(-\varepsilon F)$ is pseudoeffective.
\end{itemize}
\end{conjecture}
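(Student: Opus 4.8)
The plan is to prove the dichotomy by feeding the pseudoeffectivity supplied by Theorem~\ref{theorem-direct-image} into the variation of Hodge structure carried by the abelian fibration. Following the discussion preceding Corollary~\ref{corollary-rank-one} we may assume $q(X)=0$, so that $C\simeq\PP^1$, and work first over the smooth locus $C^\circ\subset C$, where $f$ is a submersion with abelian fibres. There the relative cotangent sequence
\[
0 \to f^*\Omega_C \to \Omega_X \to \Omega_{X/C} \to 0
\]
exhibits $\sV$ as assembled from the base direction $f^*\Omega_C$ together with the invariant fibre $1$-forms that extend to $X$; pushing forward, $f_*\Omega_X$ is an extension of $\ker\theta\subset f_*\Omega_{X/C}$ by $\Omega_C$, where $\theta$ is the Kodaira--Spencer map, i.e.\ the Higgs field of the weight-one variation $R^1f_*\bZ|_{C^\circ}$. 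The first step is thus to pin down $\sV|_{C^\circ}$ exactly in terms of the Hodge bundle $f_*\Omega_{X/C}$ and of $\theta$, so that the numerical behaviour of $\sV$ is translated into the behaviour of the holomorphic subbundle $\ker\theta$ with its induced Hodge metric.

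The mechanism I would then use is the canonical splitting of the variation. Possibly after a finite \'etale base change, the theorem of the fixed part together with Deligne semisimplicity decompose the Hodge bundle into a unitary flat (constant) summand and a summand carrying no constant sub-variation, which is the Hodge-theoretic incarnation of the Fujita decomposition on $\PP^1$. The flat summand lies in $\ker\theta\cap\ker\theta^{*}$, hence inside $\sV$; pulled back and saturated it produces a reflexive $\sW\subset\sV$ whose pullback is numerically flat, and therefore numerically projectively flat in the sense of Definition~\ref{definition-projectively-flat}, yielding the first alternative. When this fixed part vanishes, the strict Griffiths positivity of the remaining variation should force the induced positivity of $\sV$ to exceed that of the fibre class: since $\sO_X(F)\simeq f^*\sO_{\PP^1}(\mathrm{pt})$, a subsheaf whose positive degree dominates the contributions of the vertical divisors remains pseudoeffective after twisting by $-\varepsilon F$ for small $\varepsilon>0$. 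The analytic input here would be the canonical positive current in the tautological class produced by the proof of Theorem~\ref{theorem-direct-image}, whose singular locus detects $\Omega_X/\sV$: a Bochner-type comparison of this current with the Hodge metric should separate the flat locus from the strictly curved locus.

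The hard part will be that the strictly positive Hodge directions are exactly those with $\theta\neq0$, so they lie \emph{outside} $\sV$; one therefore cannot simply import the ample summand of the Fujita decomposition as a subsheaf of $\Omega_X$, and the genuine difficulty is to understand the induced positivity of $\ker\theta$ itself, which is the part actually embedded in $\sV$. Making this rigorous requires reconciling Deligne's canonical extension with the sheaf $f_*\Omega_X$ and controlling the correction terms along the singular and multiple fibres, precisely the divisors where saturation can modify the numerical class; in particular the numerical flatness of $\sW$, or the strict comparison with $\sO_X(\varepsilon F)$, must be verified on the total space and not merely on $C^\circ$. I expect the crux to be showing that pseudoeffectivity of $\sV$ rigidly constrains $\ker\theta$ to be either flat or strictly positive transverse to the fibres, with no intermediate negatively curved directions surviving saturation, and that this separation descends to an honest reflexive subsheaf on all of $X$. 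This is also where one should have to use that the extension arises from the cotangent sequence of $f$ rather than being arbitrary, since Example~\ref{example-bad-case} shows that for general extensions the underlying rigidity fails.
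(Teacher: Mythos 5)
You should first note that the statement you are proving is labelled a \emph{conjecture} in the paper, and the paper contains no proof of it: the authors state explicitly that their partial result, Proposition \ref{proposition-dichotomy} (which, assuming $q(X)=0$ and irreducible fibres, gives only the weaker dichotomy ``$\kappa(X,\sV)\geq 0$ or a tautological class of $\sV$ is nef in codimension one''), ``does not allow to conclude even if $\rk \sV=2$''. So there is no proof to compare yours against; the only question is whether your sketch actually closes the gap, and it does not.

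The concrete gaps are the following. For the second alternative, your claim that when the fixed part of the variation vanishes the ``strict Griffiths positivity of the remaining variation should force the induced positivity of $\sV$ to exceed that of the fibre class'' is not an argument but a restatement of the conjecture. The part of the Hodge bundle that embeds into $\sV$ is $\ker\theta$, and with the Hodge metric $\ker\theta$ is semi\emph{negatively} curved: curvature decreases in holomorphic subbundles, and the curvature $-\theta\wedge\theta^{*}$ of $E^{1,0}$ restricts to zero on $\ker\theta$. This is consistent with Example \ref{examplepushisotrivial} in the paper, where $f_*\Omega_X$ is anti-ample; all positivity of $\sV=(f^*f_*\Omega_X)^{sat}$ is created by the vertical divisors acquired in the saturation, i.e.\ precisely the ``correction terms along the singular and multiple fibres'' that you defer, and neither your sketch nor the paper provides a mechanism extracting an $\varepsilon F$ of positivity from them (Example \ref{examplefibreproduct} shows why even deciding the pseudoeffectivity of the constituent pieces is currently out of reach). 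For the first alternative, the flat summand lives only on $C^\circ$; its Deligne extension to $C\simeq\PP^1$ has degree shifted by parabolic weights at the singular fibres, and saturating the pullback inside $\Omega_X$ modifies the class by vertical divisors, so numerical projective flatness of $\sW$ on all of $X$ is asserted but never verified --- note also that the paper explicitly warns one may need \emph{not} to saturate $\sW$ (``to avoid some negative quotient bundles''), whereas your construction saturates. In short, you correctly identify the crux --- that pseudoeffectivity of $\sV$ should rigidly separate flat from strictly positive transverse directions after saturation --- but you then assume it rather than prove it; the proposal is a plausible research program aligned with the heuristics in the paper, not a proof.
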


Note that we do not claim that $\sW \subset \Omega_X$ is saturated, in fact it might be indispensable not to saturate to avoid some negative quotient bundles.
If Conjecture \ref{conjecture-dichotomy} holds, we immediately obtain a positive answer to the nonvanishing in Question \ref{question-curve}, see Propositions \ref{proposition-projectively-flat}
and \ref{proposition-base-positivity}.
As a first step towards the conjecture we show, under some conditions, in Proposition \ref{proposition-dichotomy} that  $\kappa(X, \sV) \geq 0$ unless $\sV$ is nef in codimension one. While this gives us some confidence in the conjecture, it does not allow to conclude even if $\rk \sV=2$.

\begin{question}
Let $X$ be a projective manifold, and let $\sV$ be a reflexive sheaf of rank two on $X$.
Assume that $\sV$ is almost nef \cite{DPS01}, \cite[Defn.1.7]{HP19} and the determinant $\det \sV$ is nef but not big. Is the vector bundle $\sV$ nef ?
\end{question} 

If $\det \sV$ is numerically trivial, the answer is yes \cite[Thm.1.8]{HP19}. The question also has a positive answer if $\sV$
is the cotangent bundle of an elliptic surface \cite[Sect.6.D]{HP20}, 
but the argument relies heavily on Kodaira's classification of singular fibres \cite{Kod}.

\subsection{An example} \label{subsection-example}

Theorem \ref{theorem-direct-image} establishes the pseudoeffectivity for the saturation
of the direct image $f^* f_* \Omega_X$, while $f_* \Omega_X$ is typically very negative. Even in geometrically restrictive situations this makes it difficult
to understand the positivity of (the subsheaves of) the cotangent bundle:

\begin{example} \label{examplefibreproduct}
Let $\holom{f_S}{S}{\PP^1}$ be a relatively minimal elliptic fibration that is not isotrivial, and let $\holom{f_Z}{Z}{\PP^1}$ be a relatively minimal elliptic fibration that is isotrivial. Assume that $f_S$
and $f_Z$ do not have a singular fibre over the same point of $\PP^1$, so the fibre product $X: = S \times_{\PP^1} Z$ is smooth and admits two elliptic fibrations
$\holom{g_S}{X}{S}$ and $\holom{g_Z}{X}{Z}$. Then we have two exact sequences
$$
0 \rightarrow g_Z^* \Omega_Z \rightarrow \Omega_X \rightarrow \Omega_{X/Z} \simeq g_S^* \Omega_{S/\PP^1} \rightarrow 0
$$
and
$$
0 \rightarrow g_S^* \Omega_S \rightarrow \Omega_X \rightarrow \Omega_{X/S} \simeq g_Z^* \Omega_{Z/\PP^1} \rightarrow 0.
$$
Denote by 
$$
\holom{f:=f_Z \circ g_Z = f_S \circ g_S}{X}{\PP^1}
$$ 
the abelian fibration, so we have a commutative diagram
$$
\xymatrix{
& X \ar[ld]_{g_Z} \ar[rd]^{g_S} \ar[dd]_f &
\\
Z \ar[rd]_{f_Z} & & S \ar[ld]^{f_S}
\\
& \PP^1 &
}
$$
The direct image $f_* \Omega_X$ has rank two, and
its saturation $(f^* f_* \Omega_X)^{sat} \subset \Omega_X$ coincides with
with the saturation $(g_Z^* \Omega_Z)^{sat} \subset \Omega_X$. 

Assume now that in this situation the cotangent bundle $\Omega_X$ is pseudeffective. By Theorem \ref{theorem-direct-image} we know that 
$$
\sV := (f^* f_* \Omega_X)^{sat}
$$
is pseudoeffective. Since
$$
\sV = (g_Z^* \Omega_Z)^{sat},
$$
we are tempted to believe that $\Omega_Z$ is pseudoeffective, but this might very well be false if $\Omega_S$ is pseudoeffective. Even if $\Omega_Z$
is pseudoeffective, we would have to show the nonvanishing conjecture
for isotrivial elliptic fibrations, a surprisingly difficult task, cp. \cite[Cor.6.8]{HP20}.
\end{example}

\subsection{Outlook - birational models} \label{subsection-outlook}

In Proposition \ref{proposition-dichotomy} we obtain a first step towards Conjecture \ref{conjecture-dichotomy}, assuming that the fibration $f$ has {\em irreducible fibres}.
Making this assumption has numerous technical advantages, moreover we can achieve this setup by using the (log-)MMP:
starting with a projective manifold $X_0$ and an Iitaka fibration $\holom{f_0}{X_0}{C}$ as in Question \ref{question-curve}, we can always find a birational model
$$
X_0 \dashrightarrow X_m
$$
such that $X_m$ is normal projective variety with canonical $\Q$-factorial singularities with an Iitaka fibration $\holom{f_m}{X_m}{C}$ such that all the fibres are irreducible. Indeed if $\sum_{i=1}^k a_i F_i$ is a reducible fibre, we can find an $1>\epsilon>0$
such that the pair $(X_0, \epsilon F_1)$ is klt. Since
$$
(K_{X_0}+\epsilon F_1)|_{F_1} \equiv - \frac{1}{a_1} \sum_{i=2}^k a_i F_i 
$$
is a non-zero antieffective divisor, the log-canonical divisor $K_{X_0}+\epsilon F_1$
is not nef over $C$. Thus by \cite[Thm.1.3]{Fuj11} we can run a terminating MMP
$$
\mu_1: X_0 \dashrightarrow X_1
$$
over $C$ such that $K_{X_1}+\epsilon (\mu_1)_*F_1$ is relatively trivial. Since all the contractions in this MMP are over $C$ and $f_0$ is the Iitaka fibration, all the contractions
in the MMP are $K$-trivial. Thus $X_1$ has canonical singularities and $f_1: X_1 \rightarrow C_1$ is the Iitaka fibration. Repeating the computation above we see that $\mu_1$
contracts the divisor $F_1$. Now we proceed inductively, since $f$ has only finitely many reducible fibres this terminates after finitely many steps.

By \cite[Cor.4.3]{HP20} the cotangent sheaf of the birational model $\Omega_{X_m}$
is pseudoeffective, this leads to a weak form of Question \ref{question-curve}:

\begin{question} \label{question-curve-bis}
Let $X$ be a projective manifold such that the cotangent bundle $\Omega_X$ is pseudoeffective. Assume that $K_X$ is semiample with numerical dimension one, so the Iitaka fibration is a morphism $\holom{f}{X}{C}$ onto a curve $C$.

Let $\holom{f_m}{X_m}{C}$ be a birational model of the Iitaka fibration such that
all the fibres are irreducible. Do 
we have 
$$
H^0(X, S^{[k]} \Omega^{[1]}_X) \neq 0
$$ 
for some $k \in \N$?
\end{question}

While the spaces of holomorphic symmetric forms are birational invariants of projective manifolds, this does not hold for projective varieties with canonical singularities.
Thus Question \ref{question-curve-bis} is not equivalent to Question \ref{question-curve}. Nevertheless we expect that a good understanding of the global sections of
$S^{[k]} \Omega^{[1]}_X$ allows to decide whether they lift to a resolution of singularities (cf. \cite[\S 4,(D)]{Sak79} and \cite[Prop.3.2]{BTVA22}).

{\bf Acknowledgements.} The Institut Universitaire de France and A.N.R project Karmapolis (ANR-21-CE40-0010)  provided excellent working conditions for this project.
We thank C. Gachet and M. P\v aun for helpful communications.

\section{Notation and basic results}

We work over the complex numbers, for general definitions in complex and algebraic geometry we refer to \cite{Dem12, Har77}. 
We use the terminology of \cite{Deb01} and \cite{KM98}  for birational geometry and notions from the minimal model program,
and \cite{Laz04b} for algebraic notions of positivity of vector bundles (cf. in particular \cite[Sect.6.2]{Laz04b} for the notion of $\Q$-twist). We will frequently use the divisorial Zariski decomposition of a divisor as defined in \cite{Bou04, Nak04}.

Manifolds and varieties will always be supposed to be irreducible and reduced.

\begin{definition} \label{definitiontildeq} 
Let $X$ be a projective manifold.  Then, as usual,
$$
q(X) = h^1(X,\sO_X) = h^0(X,\Omega_X) 
$$
is the irregularity of $X$. 
Further, we denote  by $\tilde q(X) $ the maximal irregularity $q(\tilde X)$, where
$\tilde X \to X$ is any finite \'etale cover. 
\end{definition}

\begin{definition} \label{def:reflexive}  \cite{Dru18}, \cite[Defn.2.1]{HP19}.
Let $X$ be a normal projective variety, 
and let $\sE$ be a reflexive sheaf on $X$.  
Then $\sE$ is pseudoeffective
if for some ample Cartier divisor $H$ on $X$ and for all $c>0$ there exist numbers $j \in \N$ and $i \in \N$ such that $i>cj$ and
$$
H^0(X, S^{[i]} \sE \otimes \sO_X(jH)) \neq 0.
$$ 
\end{definition} 

It will be convenient to characterise pseudoeffective in terms of tautological class:

\begin{definition} \label{definitiontautological}
Let $X$ be a normal variety, and let $\sE$ be a reflexive sheaf on $X$. 
\begin{itemize} 
\item 
Denote by $$ \nu: \PP'(\sE) \to \PP(\sE)$$ 
the normalization of the unique component of $\PP(\sE)$ 
that dominates $X$. 
\item Set  $\sO_{\PP'(\sE)}(1) := \nu^*(\sO_{\PP(\sE)}(1))$. 
\item 
Let $X_0 \subset X$ be the locus where $X$ is smooth and  $\sE$ is locally free, and let 
$$\holom{r}{P}{\PP'(\sE)}$$ 
be a birational morphism from a manifold $P$ such
that the complement of $\fibre{(p \circ \nu \circ r)}{X_0} \subset P $ is a divisor $D$.
\item Set $\pi := p \circ \nu \circ r$ and $\sO_{P}(1) := r^*(\sO_{\PP'(\sE)}(1))$, where $p: \PP(\sE) \to X$ is the projection.  
\item  By \cite[III.5.10.3]{Nak04} there exists an effective
divisor $\Lambda$ supported on $D$ such that
\begin{equation} \label{push-tautological}
\pi_* (\sO_{P}(m) \otimes \sO_P(m \Lambda)) \simeq S^{[m]} \sE \qquad \forall \ m \in \N.
\end{equation}
\item  We call $\zeta := c_1(\sO_{P}(1) \otimes \sO_P(\Lambda))  \in N^1(P)$
 a tautological class of $\sE$. 
\end{itemize}
\end{definition}

\begin{remark*} \label{remark-restrict-mr-general}
We will frequently restrict the reflexive sheaf to a subvariety $W \subset X$ such that
$\sE$ is locally free in an analytic neighbourhood of $W$.
In this neighbourhood
$P$ coincides with $\PP(\sE)$, so the restriction of the
tautological class $\zeta$ coincides with $\zeta_{\PP(\sE \otimes \sO_W)}$.
\end{remark*}

While we make several choices in the construction of a tautological class, its pseudoeffectivity is independent of these choices:

\begin{lemma} \label{lemmapseff} \cite[Lemma 2.7]{Dru18} \cite[Lemma 2.3]{HP19}.
Let $X$ be a normal projective variety.
Let $\sE$ be a reflexive sheaf on $X$, and let $\zeta$ be a tautological class on $\holom{\pi}{P}{X}$
(cf. Definition \ref{definitiontautological}).
Then $\zeta $ is pseudoeffective if and only if $\sE$ is pseudoeffective. 
\end{lemma}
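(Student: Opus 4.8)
The claim is that pseudoeffectivity of a tautological class $\zeta$ on $\holom{\pi}{P}{X}$ is equivalent to pseudoeffectivity of the reflexive sheaf $\sE$, and crucially that this is independent of the choices made in Definition \ref{definitiontautological}. The natural strategy is to translate both notions into the same linear-algebraic condition on spaces of global sections, using the projection formula \eqref{push-tautological} as the bridge. Pseudoeffectivity of $\sE$ (Definition \ref{def:reflexive}) is by definition a statement about nonvanishing of $H^0(X, S^{[i]}\sE \otimes \sO_X(jH))$ with $i/j$ arbitrarily large; pseudoeffectivity of $\zeta$ on the smooth projective model $P$ means $\zeta \in \overline{\Pseff}(P)$, which I would detect via nonvanishing of $H^0(P, \sO_P(m\zeta) \otimes \pi^* \sO_X(jH))$ after twisting by a fixed ample divisor pulled back from $X$.

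First I would fix an ample divisor $H$ on $X$ and compute, via \eqref{push-tautological} together with the projection formula, that
\begin{equation}
\pi_*\big(\sO_P(m) \otimes \sO_P(m\Lambda) \otimes \pi^*\sO_X(jH)\big) \simeq S^{[m]}\sE \otimes \sO_X(jH).
\end{equation}
Taking global sections and using that $\pi$ has connected fibres (so $\pi_* \sO_P = \sO_X$) gives the key identification
\begin{equation}
H^0\big(P, \sO_P(m\zeta) \otimes \pi^*\sO_X(jH)\big) \simeq H^0\big(X, S^{[m]}\sE \otimes \sO_X(jH)\big).
\end{equation}
This single isomorphism already makes the two nonvanishing conditions coincide: the right-hand side being nonzero for suitable $(i,j)$ with $i > cj$ is exactly Definition \ref{def:reflexive}, while the left-hand side being nonzero is what witnesses that $\zeta$ lies in the pseudoeffective cone up to a bounded ample twist. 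I would then invoke a standard characterisation of pseudoeffectivity of a class on a projective manifold: $\zeta \in \overline{\Pseff}(P)$ iff for every (equivalently, some fixed) ample $A$ and every $c > 0$ there exist $i > cj$ with $H^0(P, \sO_P(i\zeta) \otimes \sO_P(jA)) \neq 0$. Here I would take $A$ to be a pullback $\pi^* H$ plus a small ample correction on $P$ and argue that replacing a general ample $A$ by $\pi^*H$ is harmless because $\pi^*H$ is nef and big on the component dominating $X$; this comparison is where the choices of $P$, $r$, $D$, and $\Lambda$ get absorbed, since any two such models differ by exceptional data supported on $D$ that does not affect the asymptotic growth of sections.

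The main obstacle, and the step deserving the most care, is precisely this last comparison: showing that detecting pseudoeffectivity of $\zeta$ against the specific degenerate polarisation $\pi^*H$ is equivalent to detecting it against a genuine ample class on $P$, and that the answer does not depend on the birational model $P$ nor on the auxiliary boundary divisor $\Lambda$. The difficulty is that $\pi^*H$ is only nef, not ample, so a priori $\zeta + \varepsilon \pi^*H$ could fail to pick up pseudoeffectivity concentrated on the $\pi$-exceptional locus. I would handle this by noting that $\zeta = c_1(\sO_P(1) \otimes \sO_P(\Lambda))$ is $\pi$-big on $P$ (its restriction to a general fibre is the tautological class of a projective space, hence ample there), so adding any ample class on $P$ and adding a multiple of $\pi^*H$ differ only by contributions from vertical divisors, whose sections are governed by \eqref{push-tautological}; a Fujita-type approximation or a direct cohomological estimate then shows the two growth conditions agree. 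Since this lemma is quoted verbatim from \cite{Dru18, HP19}, I would in practice cite those references for the delicate model-independence and present the projection-formula computation above as the conceptual heart of the equivalence.
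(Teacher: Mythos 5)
The paper does not prove this lemma itself --- it quotes it from \cite[Lemma 2.7]{Dru18} and \cite[Lemma 2.3]{HP19} --- and your proposal reconstructs essentially the argument of those references: the identification $H^0(P, \sO_P(i\zeta)\otimes\pi^*\sO_X(jH)) \simeq H^0(X, S^{[i]}\sE\otimes\sO_X(jH))$ coming from \eqref{push-tautological} and the projection formula, followed by a comparison between the degenerate polarisation $\pi^*H$ and a genuine ample class on $P$, using that $\zeta$ is $\pi$-big. Two points to tighten. First, your parenthetical claim that $\pi^*H$ is ``nef and big on the component dominating $X$'' is false whenever $\rk \sE \geq 2$: the class $\pi^*H$ is nef of numerical dimension $\dim X < \dim P$, hence never big on $P$. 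You correctly retract this in your final paragraph, but note that no Fujita-type approximation is needed there: since $\zeta$ restricts to the hyperplane class on a general fibre $\PP^{\rk\sE -1}$ (the divisor $\Lambda$ lives over $X\setminus X_0$), a Kodaira-type lemma for relatively big classes gives $m_0\zeta + \pi^*H \sim_{\Q} A + E$ with $A$ ample on $P$ and $E$ effective, whence $H^0(P, i\zeta + jA) \hookrightarrow H^0\bigl(P, (i+jm_0)\zeta + j\pi^*H\bigr)$, and the slope condition survives because $(i+jm_0)/j > c$ whenever $i/j > c$. Second, the direction ``$\sE$ pseudoeffective $\Rightarrow \zeta$ pseudoeffective'' needs none of this machinery: the effective classes $\frac{1}{i}\bigl(i\zeta + j\pi^*H\bigr)$ with $j/i < 1/c \to 0$ converge to $\zeta$ in $N^1(P)$, and the pseudoeffective cone is closed. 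With these repairs your outline is complete, and the model-independence you worry about is automatic, since \eqref{push-tautological} holds by construction on every admissible model $(P, \Lambda)$.
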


\begin{remark}\label{anav}
In the situation of Definition \ref{definitiontautological}, 
assume that we can find a possibly singular metric $h$ on $\sO_{P}(1)$ such that $i\Theta_{h} (\sO_{P}(1)) + C_1 [D] \geq 0$ for some constant $C_1$.
Then for some $m \in \N$ the divisor $\Lambda':= C_1 [D] + m \Lambda$ is effective.
Moreover, up to replacing $m$ by some $m' \gg m$, we can assume 
that $(C_1 [D] + m \Lambda)|_\Gamma$ is not $\pi|_\Gamma$-pseudoeffective for every prime component $\Gamma \subset D$ (indeed
$\Lambda$ has this property, cf. the proof of \cite[III,Lemma 5.10.(3)]{Nak04}.
Thus $\sO_{P}(1) + \Lambda'$ is also a tautological class for $\sE$. Since it is pseudoeffective, the reflexive sheaf $\sE$ is pseudoeffective.
\end{remark}	

\begin{definition}  \label{def:Kodaira-1} Let $X$ be a normal projective variety, and let $\sE$ be a reflexive sheaf on $X$. 
Let $\holom{\pi}{P}{X}$ be as in Definition \ref{definitiontautological},
and let $\zeta$ be a tautological class on $P$.
Then we set
$$
\kappa (X, \sE) = \kappa (P, \zeta).
$$
\end{definition} 

By construction $\kappa (P, \zeta) \geq 0$ if and only if $H^0(X,S^{[m]}(\sE)) \neq 0$ for some $m \in \N$.

\begin{remark*}
If $\pi: P \to X$ denotes the projection, then 
$$ H^0(P, \sO_P(m\zeta)) = H^0(X,S^{[m]}(\sE)) $$
for all positive numbers $m$, and therefore the definition is independent on the choices made. 
\end{remark*}

\begin{lemma} \label{lemmanumericalclassfibres}
Let $X$ be a projective manifold of dimension $n$ that admits a fibration $\holom{f}{X}{C}$ onto a curve $C$.
Let $D$ be an $\R$-divisor class on $X$ such that 
$$
D^2 \cdot A^{n-2} \geq 0, \qquad D|_F \equiv 0
$$
where $A$ is an ample class on $X$ and $F$ is a general fibre of $f$. Then we have
$D \equiv \lambda F$ for some $\lambda \in \R$.
\end{lemma}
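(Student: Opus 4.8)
The plan is to deduce the statement from the Hodge index theorem applied to the quadratic form $Q(\alpha) := \alpha^2 \cdot A^{n-2}$ on the finite-dimensional real vector space $N^1(X)_\R$. By the algebraic form of the Hodge index theorem, a consequence of the Hodge--Riemann bilinear relations for the ample class $A$, the associated symmetric bilinear form $Q(\alpha,\beta) = \alpha \cdot \beta \cdot A^{n-2}$ has signature $(1, \rho-1)$, where $\rho = \dim_\R N^1(X)_\R$; equivalently $Q$ is positive on $\R A$ and negative definite on the $Q$-orthogonal complement of $A$. So the whole proof reduces to a linear algebra statement about a Minkowski-type form together with two elementary intersection computations.

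First I would record the two numerical facts about the fibre class $F$ that are needed. Since any two general fibres of $f$ are disjoint and numerically equivalent, one has $F^2 \cdot A^{n-2} = 0$, so $F$ is a null vector for $Q$; and since $F$ is a nonzero effective divisor while $A$ is ample, $F \cdot A^{n-1} > 0$, so $F \neq 0$ in $N^1(X)_\R$. Thus $F$ is a nonzero isotropic vector, lying on the boundary of the positive cone of $Q$. Next I would translate the hypotheses on $D$: the assumption $D^2 \cdot A^{n-2} \geq 0$ reads $Q(D) \geq 0$, while $D|_F \equiv 0$ gives $D \cdot F \cdot A^{n-2} = (D|_F) \cdot (A|_F)^{n-2} = 0$, that is $Q(D,F) = 0$.

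The conclusion then follows from a reverse Cauchy--Schwarz argument in signature $(1,\rho-1)$: a vector $D$ that is $Q$-orthogonal to a nonzero null vector $F$ and satisfies $Q(D) \geq 0$ must be a multiple of $F$. Concretely, diagonalising $Q$ so that the form is $x_0^2 - x_1^2 - \cdots - x_{\rho-1}^2$ and $F = (1,1,0,\ldots,0)$, the relation $Q(D,F)=0$ forces the first two coordinates of $D$ to coincide, and then $Q(D) = -\sum_{i \geq 2} D_i^2 \geq 0$ forces all remaining coordinates to vanish, whence $D \equiv \lambda F$ with $\lambda$ equal to the common first coordinate.

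I expect the only point requiring care to be invoking the Hodge index theorem with the correct signature on the N\'eron--Severi space $N^1(X)_\R$ rather than on the full $H^{1,1}(X)$. This is harmless because $A$ lies in $N^1(X)_\R$, so the Lefschetz decomposition with respect to $A$ restricts to the algebraic classes and the form retains signature $(1,\rho-1)$ on this subspace; everything else in the argument is formal.
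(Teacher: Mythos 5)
Your proof is correct, but it takes a genuinely different route from the paper's. The paper reduces to the surface case: it cuts a smooth surface $S \subset X$ by general members of a sufficiently high multiple of $A$, invokes the Lefschetz hyperplane theorem to see that the restriction $i^*: N^1(X) \rightarrow N^1(S)$ is injective, and then combines the classical surface Hodge index theorem (giving $D|_S^2 \leq 0$, hence $=0$) with \cite[Lemma 2.2]{LP18} --- a statement about a nef class of numerical dimension one on a surface --- to conclude $D|_S \equiv \lambda F|_S$. You instead stay on $X$ and use the higher-dimensional Hodge index theorem: the form $Q(\alpha) = \alpha^2 \cdot A^{n-2}$ has signature $(1,\rho-1)$ on $N^1(X)_\R$ by the Hodge--Riemann relations for the K\"ahler class $A$ (your remark that the Lefschetz decomposition restricts to the algebraic subspace containing $A$ is the right justification), and your Lorentzian linear-algebra step --- a vector with $Q(D) \geq 0$ orthogonal to a nonzero null vector must be proportional to it --- is essentially the content of the cited LP18 lemma, which you prove from scratch. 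Your verifications are all sound: $Q(F)=0$ because distinct general fibres are disjoint, $F \neq 0$ because $F \cdot A^{n-1} > 0$, and $Q(D,F) = (D|_F)\cdot(A|_F)^{n-2} = 0$ by the hypothesis $D|_F \equiv 0$; the normal form $F = (1,1,0,\dots,0)$ is attainable since the isometry group of a Lorentzian form acts transitively on rays of the null cone (and $\rho \geq 2$ is automatic, as $F$ is a nonzero null vector). What each approach buys: yours is self-contained and uniform in $n$ (no Lefschetz input, no external lemma, no smoothness of $S$ to arrange), while the paper's is shorter modulo its two citations and only needs the more classical surface form of the index theorem.
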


\begin{proof}
Let $i: S \subset X$ be a smooth surface cut out by general elements of some sufficiently high multiple of $A$. Then we know by the Lefschetz hyperplane theorem that the pull-back map
$$
i^* : N^1(X) \rightarrow N^1(S)
$$
is injective, so it is sufficient that the equality holds for the restrictions to $S$. 
Since the statement is trivial for $D \equiv 0$, so we can assume that $D|_S \not\equiv 0$.
The class $F|_S$
is nef, not zero and of numerical dimension one. Since $F|_S \cdot D|_S = 0$ we obtain that
by the Hodge index theorem that $D|_S^2 \leq 0$. Since by assumption 
$D^2 \cdot A^{n-2} \geq 0$ we have $D|_S^2 =0$. Thus we conclude with \cite[Lemma 2.2]{LP18} that $D|_S \equiv \lambda F|_S$ for some $\lambda \in \R$.
\end{proof}

\begin{lemma} \label{lemmapushpseff}
Let $X'$ be a normal projective variety, and let $\sE'$ be a pseudoeffective reflexive sheaf on $X'$. Let  $\holom{\mu}{X'}{X}$ be a birational morphism onto a normal projective variety $X$, and set $\sE:= (\mu_* \sE')^{[**]}$. 
Then $\sE$ is pseudoeffective.
\end{lemma}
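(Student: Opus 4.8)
The plan is to verify the defining condition of Definition \ref{def:reflexive} for $\sE$ directly, transporting the nonvanishing of twisted symmetric powers from $X'$ to $X$ along $\mu$. Fix an ample Cartier divisor $H$ on $X$ and set $L := \mu^* H$, which is big and nef on $X'$ since $\mu$ is birational. Because $\sE'$ is pseudoeffective there is an ample divisor $A'$ on $X'$ such that for every $c>0$ one finds $i>cj$ with $H^0(X', S^{[i]}\sE' \otimes \sO_{X'}(jA')) \neq 0$. As $L$ is big, Kodaira's lemma provides an integer $m_0\gg 0$ and an effective divisor $E$ with $m_0 L - A' \sim E$. Multiplication by the section cutting out $jE$ gives a sheaf map $S^{[i]}\sE' \otimes \sO_{X'}(jA') \hookrightarrow S^{[i]}\sE' \otimes \sO_{X'}(j m_0 L)$, which is injective because $S^{[i]}\sE'$ is torsion-free; hence the nonvanishing propagates. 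Re-indexing $j' := m_0 j$ and tracking the inequality $i>cj$ shows: for every $c'>0$ there exist $i>c'j'$ with $H^0(X', S^{[i]}\sE' \otimes \mu^*\sO_X(j'H)) \neq 0$.

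The geometric heart of the argument is the identification of the sheaves. Let $U \subset X$ be the open set over which $\mu$ is an isomorphism and over which $X$, $X'$ are smooth and $\sE$, $\sE'$ are locally free. Since $X$ is normal and $\mu$ is birational, $\mu$ is an isomorphism in codimension one over $X$, so the complement of $U$ has codimension $\geq 2$, i.e.\ $U$ is big. Over $U$ the isomorphism $\mu$ identifies $\sE|_U$ with $\sE'|_{\mu^{-1}(U)}$, hence $S^{[i]}\sE$ and $(\mu_* S^{[i]}\sE')^{[**]}$ are two reflexive sheaves on $X$ that restrict to the same sheaf $S^i(\sE'|_{\mu^{-1}(U)})$ on the big open set $U$; therefore they are isomorphic. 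In particular the canonical map $\mu_* S^{[i]}\sE' \hookrightarrow (\mu_* S^{[i]}\sE')^{[**]} \cong S^{[i]}\sE$ is an inclusion, the source being torsion-free as the push-forward of a torsion-free sheaf.

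It remains to push sections forward. By the projection formula and the equality $H^0(X',\sG)=H^0(X,\mu_*\sG)$,
$$
H^0\big(X', S^{[i]}\sE' \otimes \mu^*\sO_X(j'H)\big) = H^0\big(X, \mu_*(S^{[i]}\sE') \otimes \sO_X(j'H)\big),
$$
and tensoring the inclusion of the previous paragraph with the invertible sheaf $\sO_X(j'H)$ embeds the right-hand side into $H^0(X, S^{[i]}\sE \otimes \sO_X(j'H))$. Combining this with the first paragraph, for every $c'>0$ we obtain $i>c'j'$ with $H^0(X, S^{[i]}\sE \otimes \sO_X(j'H)) \neq 0$, which is exactly the pseudoeffectivity of $\sE$ with respect to the ample class $H$.

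I expect the main obstacle to be the sheaf identification in the second paragraph: the two natural constructions (push-forward followed by reflexive hull, versus reflexive hull followed by reflexive symmetric power) must be shown to agree, and this relies crucially on $\mu$ being an isomorphism in codimension one over the normal target $X$, together with the fact that reflexive sheaves are determined by their restriction to a big open set. The bookkeeping in the first paragraph, namely preserving the asymptotic condition $i>cj$ while trading the ample $A'$ for a multiple of the merely big and nef class $\mu^*H$, is routine but should be carried out with care.
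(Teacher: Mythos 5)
Your proof is correct and follows essentially the same route as the paper: replace the ample twist on $X'$ by the big and nef class $\mu^* H$, then push sections forward to $X$ using that $\mu$ is an isomorphism in codimension one over the normal target and that $S^{[i]}\sE$ is reflexive. The only difference is that where the paper cites \cite[Lemma 2.2]{HLS22} for the first step, you inline its proof via Kodaira's lemma, and you spell out the identification $(\mu_* S^{[i]}\sE')^{[**]} \simeq S^{[i]}\sE$ that the paper leaves implicit --- both correctly.
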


\begin{proof}
Let $A$ be an ample Cartier divisor on $X$, so its pull-back $\mu^* A$ is nef and big.
Since $\sE'$ is pseudoeffective we know by \cite[Lemma 2.2]{HLS22}\footnote{The statement is for vector bundles, but the proof works for reflexive sheaves.} that for every $c>0$ there
exist positive integers $i,j$ such that $i>cj$ and 
$$
H^0(X', S^{[i]} \sE' \otimes \sO_{X'}(j \mu^* A)) \neq 0.
$$ 
Since the image of the exceptional locus has codimension two in $X$ and $S^{[i]} \sE$ is reflexive the projection formula yields an inclusion 
$$
H^0(X', S^{[i]} \sE' \otimes \sO_{X'}(j \mu^* A)) \hookrightarrow
H^0(X, S^{[i]} \sE \otimes \sO_{X}(j A)). 
$$
Thus $\sE$ is pseudoeffective.
\end{proof}

\begin{definition}
Let $X$ be a projective manifold, and let $\merom{f}{X}{Z}$ be a meromorphic map
onto a projective variety $Z$. Let $i: X_0 \hookrightarrow X$ be a non-empty Zariski open set such that $f|_{X_0}$ is a morphism. Then we denote by $(f^* \Omega_Z)^{sat}$ the saturation
of the coherent sheaf $i_* (f|_{X_0})^* \Omega_Z$ in $\Omega_X$.
\end{definition}

\begin{lemma} \label{lemmachangemodel}
Let $X$ be a projective manifold, and let $\merom{f}{X}{Z}$ be a meromorphic map
onto a projective variety $Z$. Assume that there exists a birational morphism
$\holom{\mu}{X'}{X}$ and a bimeromorphic model $\merom{f'}{X'}{Z'}$ of $f$
such that $((f')^* \Omega_{Z'})^{sat}$ is pseudoeffective. Then
$(f^* \Omega_{Z})^{sat}$ is pseudoeffective.
\end{lemma}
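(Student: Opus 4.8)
The plan is to identify $(f^*\Omega_Z)^{sat}$ with the reflexive push-forward of $\sE' := ((f')^*\Omega_{Z'})^{sat}$ along $\mu$, and then to conclude with Lemma \ref{lemmapushpseff}. Since the saturation occurring in the hypothesis is defined in the sense of the preceding Definition, $X'$ is a projective manifold; in particular $\Omega_{X'}$ is locally free and $\sE'\subset\Omega_{X'}$ is a reflexive sheaf. Because $f'$ is a bimeromorphic model of $f$, there is a bimeromorphic map $\nu\colon Z'\dashrightarrow Z$ with $\nu\circ f' = f\circ\mu$ as meromorphic maps $X'\dashrightarrow Z$, and $f$, $f'$, $f\circ\mu$ are all dominant.

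The key observation is that the saturation of a subsheaf $\sF$ of a locally free sheaf $\sG$ on a normal variety depends only on the generic fibre of $\sF$: one has $\sF^{sat}=\sG\cap(\sF\otimes_{\sO}K)$ inside $\sG\otimes_{\sO}K$, where $K$ is the function field, so two subsheaves with the same generic fibre have the same saturation. I would apply this on $X'$ to the images in $\Omega_{X'}$ of the codifferentials of $f'$ and of $f\circ\mu=\nu\circ f'$. The latter codifferential factors through the codifferential $\nu^*\Omega_Z\to\Omega_{Z'}$ of $\nu$, which is an isomorphism at the generic point of $Z'$ since $\nu$ is bimeromorphic; pulling back by the dominant map $f'$ then shows that the two images span the same subspace of $\Omega_{X'}\otimes_{\sO}K(X')$. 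Hence $((f')^*\Omega_{Z'})^{sat}=((f\circ\mu)^*\Omega_Z)^{sat}$ in $\Omega_{X'}$. This target-change step is where the genuine difficulty lies: the two pulled-back sheaves do differ in codimension one, namely along the $f'$-preimages of the $\nu$-exceptional divisors, so one cannot argue by simply restricting to a big open set; it is precisely the insensitivity of saturation to codimension-one data that makes the step work, and I expect this to be the main obstacle.

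For the remaining source change I would restrict to a big open set. As $X$ is smooth and $\mu$ is birational, there is an open $X_0\subset X$ with $\codim_X(X\setminus X_0)\geq 2$ over which $\mu$ is an isomorphism and $f$ is a morphism. Over $X_0$ the maps $f$ and $f\circ\mu$ are identified, so $(\mu_*\sE')|_{X_0}=((f\circ\mu)^*\Omega_Z)^{sat}|_{\mu^{-1}(X_0)}=(f^*\Omega_Z)^{sat}|_{X_0}$. Since $(\mu_*\sE')^{[**]}$ and $(f^*\Omega_Z)^{sat}$ are reflexive sheaves on $X$ that agree on the big open subset $X_0$, they coincide, giving $(f^*\Omega_Z)^{sat}=(\mu_*\sE')^{[**]}$. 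As $\sE'$ is pseudoeffective by hypothesis, Lemma \ref{lemmapushpseff} then yields that $(\mu_*\sE')^{[**]}=(f^*\Omega_Z)^{sat}$ is pseudoeffective, which is the assertion.
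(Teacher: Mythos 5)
Your proposal is correct and takes essentially the same route as the paper: both identify $(f^*\Omega_Z)^{sat}$ with $\bigl(\mu_*\,((f')^*\Omega_{Z'})^{sat}\bigr)^{[**]}$ using that a saturated subsheaf of the cotangent bundle is determined by its generic fibre (so the bimeromorphic change of target is harmless) together with the fact that $\mu$ is an isomorphism over a big open subset of $X$, and then conclude with Lemma \ref{lemmapushpseff}. The only difference is organizational: the paper compares the two saturations on a dense open set $X_1 \subset X_0$ and invokes uniqueness of saturation, whereas you separate the target change on all of $X'$ from the source change over the big open set $X_0$.
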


\begin{proof}
Let $X_0 \subset X$ be the maximal Zariski open set where $\mu^{-1}$ is defined, identify this set to its image in $X'$.
There exists a Zariski open subset $X_1 \subset X_0$ such that $f_{X_1}$ and $f'_{X_1}$
are morphisms. Thus $((f')^* \Omega_{Z'})^{sat}$ and $(f^* \Omega_{Z})^{sat}$ coincide
on $X_1$. Since $\Omega_X|_{X_0} \simeq \Omega_{X'}|_{X_0}$ and the saturation of a sheaf is unique, we see that  $((f')^* \Omega_{Z'})^{sat}$ and $(f^* \Omega_{Z})^{sat}$ coincide
on $X_0$. Since $X \setminus X_0$ has codimension at least two this shows 
that $(\mu_* ((f')^* \Omega_{Z'})^{sat})^{**} \simeq (f^* \Omega_{Z})^{sat}$.
Now conclude with Lemma \ref{lemmapushpseff}.
\end{proof}

\begin{lemma} \label{lemmafactorisation}
Let $X$ be a projective manifold with pseudoeffective cotangent bundle $\Omega_X$.
Assume that $X$ admits a fibration $\holom{f}{X}{C}$ such that the general fibre $F$
has numerically trivial canonical bundle.
Then there exists an almost holomorphic map $\merom{g}{X}{Z}$ onto a projective manifold $\holom{h}{Z}{C}$ such that the general fibre of $h$ is birational to a quotient of an abelian variety and $(g^* \Omega_Z)^{sat}$ is pseudoeffective.
\end{lemma}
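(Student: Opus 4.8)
The plan is to construct $g$ as a relative Albanese-type map that collapses the simply connected factors of the general fibre, and then to transfer the pseudoeffectivity of $\Omega_X$ to the resulting horizontal subsheaf $(g^*\Omega_Z)^{sat} \subset \Omega_X$. By Lemma \ref{lemmachangemodel} the pseudoeffectivity of $(g^*\Omega_Z)^{sat}$ depends only on the bimeromorphic class of $g$, so throughout I am free to replace $X$ and $Z$ by convenient birational models and to descend at the end via Lemma \ref{lemmapushpseff}. In particular I may assume that $g$ restricts to a morphism over the open locus $C_0 \subset C$ over which $f$ is smooth.

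First I would analyse the general fibre. Since $K_F \equiv 0$, the Beauville--Bogomolov decomposition theorem provides a finite \'etale cover $\tilde F \to F$ with $\tilde F \simeq T \times W$, where $T$ is an abelian variety and $W$ is a simply connected manifold with $K_W \equiv 0$, namely a product of Calabi--Yau and irreducible holomorphic symplectic factors; in particular $q(W)=0$. Collapsing the factor $W$ fibrewise (equivalently, passing to the fibrewise Albanese) exhibits the ``abelian part'' of $F$ as a quotient $T/G$ of an abelian variety by a finite group, which is birational to a quotient of an abelian variety. As the relevant Hodge numbers are constant over $C_0$, these abelian parts fit into a family over $C_0$ and the collapsing maps glue to an almost holomorphic map $X \dashrightarrow Z_0$ over $C_0$ whose general fibre is $W$. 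After compactifying $Z_0$ and resolving I obtain $\merom{g}{X}{Z}$ and $\holom{h}{Z}{C}$ with $h \circ g = f$, the general fibre of $h$ being birational to a quotient of an abelian variety; the monodromy of $R^1 f_* \Z$ is absorbed into the finite quotient, which is precisely why one obtains a quotient of an abelian variety rather than an abelian variety over the non-\'etale base $C$.

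The heart of the argument is to prove that $(g^*\Omega_Z)^{sat}$ is pseudoeffective. Here I cannot merely invoke the pseudoeffectivity of $\Omega_X$, since pseudoeffectivity does not pass to subsheaves in general: the example $X=\PP^1\times F$ discussed after Theorem \ref{theorem-direct-image} shows that the horizontal subsheaf can fail to be pseudoeffective precisely when the fibre carries many symmetric differentials. The structural input to exploit is that the fibres $W$ of $g$ are simply connected with $K_W \equiv 0$, so that they carry no symmetric differentials, i.e. $H^0(W, S^m\Omega_W)=0$ for all $m \geq 1$. Concretely I would work on $\PP(\Omega_X)$ with a positive closed current in the pseudoeffective tautological class $\zeta$; the relative cotangent sequence $0 \to g^*\Omega_Z \to \Omega_X \to \Omega_{X/Z} \to 0$ realises $\PP(\Omega_{X/Z})$ as the locus of vertical codirections inside $\PP(\Omega_X)$, and the absence of symmetric differentials on $W$ should force the positive part of the current away from these vertical directions, hence to originate from the subsheaf $g^*\Omega_Z$. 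Transferring this back to $X$ and descending from the chosen model with Lemma \ref{lemmapushpseff} then yields the pseudoeffectivity of $(g^*\Omega_Z)^{sat}$.

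I expect this last step to be the main obstacle. The constancy of Hodge numbers and the gluing of the relative construction over $C_0$ are routine, but making precise the assertion that the pseudoeffectivity of $\Omega_X$ is carried by the horizontal subsheaf requires a genuine positivity input: one must leverage the fibrewise vanishing of symmetric differentials \emph{uniformly in the twist}, since a section witnessing pseudoeffectivity of $\Omega_X$ lives in $H^0(X, S^m\Omega_X \otimes \sO_X(jH))$ and its restriction to $W$ a priori need not vanish after twisting by the ample $\sO_X(jH)|_W$. The delicate point is therefore to control how a positive current in $\zeta$, or equivalently the graded pieces of these twisted sections, meet the vertical projectivised subbundle $\PP(\Omega_{X/Z})$, and to rule out any vertical contribution using that the Calabi--Yau and holomorphic symplectic factors admit no twisted symmetric differentials growing sublinearly in the twist. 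This is where a direct image or current argument in the spirit of the techniques underlying Theorem \ref{theorem-direct-image} appears indispensable.
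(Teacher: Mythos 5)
Your construction of $g$ is in substance the paper's: the Beauville--Bogomolov decomposition of an \'etale Galois cover $F' \simeq A \times Y$ with $Y$ simply connected, the diagonal action of the Galois group $G$ (via \cite[Lemma, p.8]{Katata}) giving the fibration $F \rightarrow A/G$, and the freedom to pass to birational models via Lemma \ref{lemmachangemodel} together with the birational invariance of pseudoeffectivity of the cotangent bundle \cite[Prop.4.1]{HP20}. Two repairable differences: the paper glues the fibrewise quotients into a family by taking an irreducible component of $\Chow{X/C}$, which is more robust than your appeal to constancy of Hodge numbers (you need the fibrations $F \rightarrow A/G$ to vary in an algebraic family, not merely their invariants to be constant); and the paper treats the case $\dim A = 0$ separately, where $Z = C$ and one must still prove that $(f^* \Omega_C)^{sat}$ is pseudoeffective.

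The genuine gap is precisely the step you flag as the main obstacle: you never prove that the positivity of the tautological class is carried by the horizontal subsheaf, and the route you gesture at --- the fibrewise untwisted vanishing $H^0(W, S^m\Omega_W)=0$ upgraded ``uniformly in the twist'' by a direct-image or current argument in the spirit of Theorem \ref{theorem-direct-image} --- is both incomplete and aimed at heavier machinery than the paper uses for this lemma. The paper closes the step with two ingredients. First, the uniform-in-the-twist input you ask for is available as a black box: by \cite[Thm.1.6]{HP19} the cotangent bundle of the general $g$-fibre (an \'etale quotient of the simply connected $Y$ with $c_1=0$) is \emph{not pseudoeffective}, which is strictly stronger than the vanishing of symmetric differentials. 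Second, the transfer mechanism is not current-theoretic but a short divisorial Zariski decomposition computation. Writing $0 \rightarrow (g^*\Omega_Z)^{sat} \rightarrow \Omega_X \rightarrow \sG \rightarrow 0$ and taking an embedded resolution $\mu : \Gamma \rightarrow \PP(\Omega_X)$ of $\PP(\sG)$ with exceptional locus $E$, one has $\mu^*\zeta_X - E = \psi^*\zeta_Z$ for the induced fibration $\psi : \Gamma \rightarrow \PP((g^*\Omega_Z)^{sat})$. Since $\sG$ restricts to $\Omega_G$ on a general fibre $G$, the restriction of $\zeta_X$ to $\PP(\sG)$ is not pseudoeffective, so there exists $\lambda > 0$ such that $(\mu^*\zeta_X - \lambda E)|_E$ is pseudoeffective; restricting to the fibre $E_z \simeq \PP(\Omega_G) \times \PP^{\dim Z - 1}$ over a very general $z \in Z$ gives $(1-\lambda)\,\zeta_{\PP(\Omega_G)} + \lambda\, p^* H$, and since $E_z$ is a product and $\zeta_{\PP(\Omega_G)}$ is not pseudoeffective this forces $\lambda \geq 1$. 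Hence $\mu^*\zeta_X - E = \psi^* \zeta_Z$ is pseudoeffective, so $(g^*\Omega_Z)^{sat}$ is pseudoeffective, and Lemma \ref{lemmachangemodel} descends the conclusion to the original model. Without this inequality $\lambda \geq 1$ --- or some substitute for it --- your plan does not yield a proof; conversely, once \cite[Thm.1.6]{HP19} is quoted, no analysis of currents in the class $\zeta_X$ is needed at all.
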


\begin{proof}
Denote by $\holom{\pi}{\PP(\Omega_X)}{X}$ the projectivisation, and by $\zeta_X$ the tautological class.
Let $F' \rightarrow F$ be an \'etale Galois cover with group $G$ giving the Beauville-Bogomolov decomposition of $F$, i.e.  $F'$ is a product $A \times Y$ where $A$ is an abelian variety and $\pi_1(Y)=1$. 

{\em 1st case. $\dim A=0$.} In this case we set $Z=C$, so our goal is to prove that
$(f^* \Omega_C)^{sat}$ is pseudoeffective. 
By \cite[Thm.1.6]{HP19} we know that $\Omega_F$ is not pseudoeffective. Consider now the exact sequence
\begin{equation} \label{help1}
0 \rightarrow (f^* \Omega_C)^{sat} \rightarrow \Omega_X \rightarrow \sG \rightarrow 0.
\end{equation}
Since $\sG$ is torsion-free, the locus where the fibration $\PP(\sG) \rightarrow X$
is not equidimensional has codimension at least two. Since $\PP(\sG) \subset \PP(T_X)$,
every fibre has dimension at most $\dim X-1$. Thus $\PP(\sG) \subset \PP(T_X)$
has a unique divisorial component $E$ and
$$
[E] = \zeta - \pi^* (f^* \Omega_C)^{sat}.
$$
Since for a general fibre
$$
\sG \otimes \sO_F \simeq \Omega_F
$$
is not pseudoeffective, the restriction of the tautological class $\zeta_X$ to $E$ is not pseudoeffective. Since $\zeta_X$ is pseudoeffective, there exists thus a $\lambda>0$
such that the restriction of $\zeta_X-\lambda E$ to $E$ is pseudoeffective. We claim that $\lambda \geq 1$: let $\holom{\pi_E}{E}{X}$ be the induced fibration, and set $E_F:=\fibre{\pi_E}{F}$ for $F$ a very general fibre. Then
$(\zeta_X-\lambda E)_{E_F}$ is pseudoeffective. Yet the restriction of \eqref{help1} to
$F$ is simply
$$
0 \rightarrow \sO_F \rightarrow \Omega_X \otimes \sO_F \rightarrow \Omega_F \rightarrow 0,
$$
so $-E|_{E_F} = -E_F|_{E_F} = \zeta_{\PP(\Omega_F)}$ yields 
$$
(\zeta_X-\lambda E)_{E_F} = (1-\lambda) \zeta_{\PP(\Omega_F)}.
$$
Since $\zeta_{\PP(\Omega_F)}$ is not pseudoeffective, we have $\lambda \geq 1$.
Recalling that  $\zeta -[E] = \pi^* (f^* \Omega_C)^{sat}$ we obtain the statement.

{\em 2nd case. $\dim A>0$.}
By \cite[Lemma, p.8]{Katata} the group $G$ acts diagonally on the product
$A \times Y$, so $F$
admits a fibration $F \rightarrow A/G$ onto a normal projective variety $A/G$. 
Since $F$ is a general fibre, the fibres of this fibration determine an irreducible component $Z \subset \Chow{X/C}$ such that the natural morphism $X \dashrightarrow Z$ is almost holomorphic and holomorphic when restricted to $F$.
Up to replacing $Z$ by a desingularisation we can assume that $Z$ is smooth,
moreover by Lemma \ref{lemmachangemodel} it is sufficient to prove 
that $((g')^* \Omega_Z)^{sat}$ is pseudoeffective for some birational model $g'$
of the fibration $g$. Since the property of having pseudoeffective cotangent bundle
is a birational invariant for projective manifolds \cite[Prop.4.1]{HP20} we can resolve the
indeterminacies of $g$. 
In order to simplify the notation we denote the birational models by the same letters
and denote the natural maps by $\holom{g}{X}{Z}$ and $\holom{h}{Z}{C}$.
By construction the general $g$-fibre $G$ is an \'etale quotient of $Y$, 
so by \cite[Thm.1.6]{HP19} its cotangent bundle is not pseudoeffective. 

Let
\begin{equation} \label{exactfibration}
0 \rightarrow (g^* \Omega_Z)^{sat} \rightarrow \Omega_X \rightarrow \sG \rightarrow 0
\end{equation}
be the exact sequence defined by the fibration $g$. 
Let $\holom{\mu}{\Gamma}{\PP(\Omega_X)}$ be the 
composition of an embedded resolution of $\PP(\sG) \subset  \PP(\Omega_X)$
and the blowup of the unique irreducible component of $\PP(\sG)$ that surjects onto $X$,
and denote by $E$ the exceptional locus. 
Then we have a natural fibration $\holom{\psi}{\Gamma}{\PP((g^* \Omega_Z)^{sat})}$
such that $\mu^* \zeta_X - E \simeq \psi^* \zeta_Z$ where
$\zeta_Z$ is the tautological class on $\PP((g^* \Omega_Z)^{sat})$.
We summarise the construction in a commutative diagram:
$$
\xymatrix{
\Gamma \ar[rd]^{\psi} \ar[d]_\mu & 
\\
\PP(T_X) \ar[d]_\pi & \PP((g^* \Omega_Z)^{sat}) \ar[ld]
\\
X \ar[r]^g \ar[d]_f & Z \ar[ld]_h
\\
C &  
}
$$
We are done if we show that $\mu^* \zeta_X - E$ is pseudoeffective: 
since the restriction of $\zeta_X$ to $\PP(\sG)$ is not pseudoeffective, we know that 
the restriction of $\mu^* \zeta_X$ to $E$ is not pseudoeffective. Thus there exists a positive number $\lambda>0$ such that the restriction of $\mu^* \zeta_X - \lambda E$ 
to $E$ is pseudoeffective. As in the first case we prove the statement by showing that $\lambda \geq 1$: let $z \in Z$ be a very general point and $G$ the $g$-fibre over it, then the restriction of \eqref{exactfibration} to $G$ is
$$
0 \rightarrow \sO_{G}^{\oplus \dim Z} \rightarrow \Omega_X \otimes \sO_G \rightarrow \Omega_G \rightarrow 0.
$$
Thus the fibre of $E \rightarrow \PP(\sG) \rightarrow X \rightarrow Z$ over $z$ is simply
$$
E_z \simeq \PP(N^*_{\PP(\Omega_G)/\PP(\Omega_X \otimes \sO_G)}
\simeq \PP(\sO_{\PP(\Omega_G)}(-1)^{\oplus \dim Z}) \simeq \PP(\Omega_G) \times \PP^{\dim Z-1}.
$$
and 
$$
-E|_{E_z} = \mu^* \zeta^*_{\PP(\Omega_G)} + p_{\PP^{\dim Z-1}}^* H
$$
with $H$ the hyperplane class. Thus we have
$$
(\mu^* \zeta_X - \lambda E)|_{E_z} = (1-\lambda)  \mu^* \zeta^*_{\PP(\Omega_G)}
+ \lambda p_{\PP^{\dim Z-1}}^* H.
$$
Since $E_z$ is a product, we finally obtain that $\lambda \geq 1$.
\end{proof}

\begin{remark}
As one can see from the construction of $Z$ in the proof of Lemma \ref{lemmafactorisation} we can assume that the general fibre of $Z \rightarrow C$
is {\em isomorphic} to the quotient of an abelian variety, if we accept that $Z$ is only normal. By \cite[Thm.1.2]{Shi21} a quotient of an abelian variety is a quasi-\'etale cover
of an abelian variety and a $\Q$-Fano variety. For the latter the tangent sheaf 
is generically ample \cite[Thm.1.3]{Pet12}, so its cotangent sheaf is not pseudoeffective.
Thus repeating the same argument, we obtain a factorisation
$Z \rightarrow C$ such that the general fibre is birational to a {\em quasi-\'etale}
quotient of an abelian variety.
\end{remark}

\begin{lemma} \label{lemma-splitting-image}
Let $X$ be a projective manifold of dimension $n$ such that $q(X)=0$, and 
let $\holom{f}{X}{\PP^1}$ be a fibration.
Let $T_1 \subset f_* \Omega_X$ be the saturation of  $\omega_{\PP^1} \rightarrow f_* \Omega_X$. Then we have
$$
f_* \Omega_X \simeq T_1 \oplus T_2.
$$
\end{lemma}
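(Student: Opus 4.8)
The plan is to reduce everything to the classification of vector bundles on $\PP^1$ and to control the degrees of the summands using $q(X)=0$ together with one piece of global geometry.

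First I would observe that $f_*\Omega_X$ is torsion-free (since $\Omega_X$ is torsion-free and $f$ is dominant), hence locally free on the smooth curve $\PP^1$; by Grothendieck's theorem it splits as $\bigoplus_{i=1}^r \sO_{\PP^1}(a_i)$. Since $H^0(\PP^1, f_*\Omega_X) = H^0(X,\Omega_X) = q(X) = 0$, every $a_i \le -1$. The natural map $\omega_{\PP^1} \to f_*\Omega_X$ is $f_*$ applied to the injection $f^*\Omega_{\PP^1} \hookrightarrow \Omega_X$ of the cotangent sequence (using $f_*\sO_X = \sO_{\PP^1}$), so it is injective and its saturation $T_1 \cong \sO_{\PP^1}(b)$ satisfies $-2 \le b \le \max_i a_i \le -1$; thus $b \in \{-2,-1\}$. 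Writing $Q := f_*\Omega_X / T_1$, which is locally free because $T_1$ is saturated, the statement is equivalent to the splitting of $0 \to T_1 \to f_*\Omega_X \to Q \to 0$, and this holds as soon as $\Ext^1(Q,T_1) = 0$. Decomposing $Q = \bigoplus_j \sO_{\PP^1}(c_j)$, one has $\Ext^1(Q,T_1) = \bigoplus_j H^1(\sO_{\PP^1}(b-c_j))$, which vanishes once all $c_j \le b+1$; since $b \ge -2$ it is therefore enough to prove that every $c_j \le -1$, i.e. that $H^0(\PP^1, Q) = 0$.

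The key step is thus to show $H^0(Q)=0$. From the long exact sequence and $H^0(f_*\Omega_X)=0$ one gets $H^0(Q) \cong \ker\big(H^1(T_1) \to H^1(f_*\Omega_X)\big)$. When $b=-1$ this is immediate since $H^1(\sO_{\PP^1}(-1))=0$. The only real case is $b=-2$, where $T_1 \cong \omega_{\PP^1}$ and $H^1(\omega_{\PP^1}) \cong \C$, and I must show that the induced map $H^1(\omega_{\PP^1}) \to H^1(f_*\Omega_X)$ is injective. Here I would bring in global geometry: by naturality of the Leray edge maps applied to the sheaf morphism $f^*\Omega_{\PP^1} \to \Omega_X$, there is a commutative square relating this map to $H^1(X, f^*\omega_{\PP^1}) \to H^1(X,\Omega_X)$, in which the edge map $H^1(\PP^1, f_*\Omega_X) \hookrightarrow H^1(X,\Omega_X)$ is injective. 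The resulting composite $H^1(\omega_{\PP^1}) \to H^1(X,\Omega_X)$ is exactly the Hodge-theoretic pullback $f^*$ on $H^{1,1}$, sending the generator $c_1(\sO_{\PP^1}(1))$ to $c_1(f^*\sO_{\PP^1}(1)) = [F]$, the class of a fibre. Since $[F]\cdot \Gamma = \deg(f|_\Gamma) > 0$ for a general complete-intersection curve $\Gamma$ dominating $\PP^1$, the class $[F]$ is nonzero in $H^1(X,\Omega_X)$, so the composite is injective; chasing the square gives injectivity of $H^1(\omega_{\PP^1}) \to H^1(f_*\Omega_X)$, hence $H^0(Q)=0$.

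The main obstacle is precisely this last point: when the saturated image $T_1$ has degree $-2$, the purely bundle-theoretic bounds do not prevent $Q$ from acquiring a trivial summand, and ruling this out requires the geometric input that the fibre class survives in $H^1(X,\Omega_X)$. Everything else (the decomposition $f_*\Omega_X = \bigoplus \sO_{\PP^1}(a_i)$, the degree estimates, and the reduction to the vanishing of $\Ext^1$) is routine once the degrees are pinned down. With $H^0(Q)=0$ in hand one has $c_j \le -1 \le b+1$, so $\Ext^1(Q,T_1)=0$, the sequence splits, and we set $T_2 := Q$.
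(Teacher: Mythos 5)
Your proof is correct and takes essentially the same route as the paper's: the same reduction of the splitting to $\Ext^1(Q,T_1)=0$ using the degree bounds $b\in\{-2,-1\}$ and $h^0(\PP^1,f_*\Omega_X)=q(X)=0$, the same case split in which only $b=-2$ is nontrivial, and the same key step of identifying the composite $H^1(\PP^1,\omega_{\PP^1})\to H^1(\PP^1,f_*\Omega_X)\to H^1(X,\Omega_X)$ with the pullback $f^*$ on $H^{1,1}$ to deduce injectivity of $H^1(T_1)\to H^1(f_*\Omega_X)$. The only (harmless) variation is that where the paper simply invokes injectivity of $f^*$ because $X$ is K\"ahler, you verify it directly by noting the generator maps to the fibre class $[F]$ and pairing with an ample complete-intersection curve.
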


Note that while the lemma works without any assumption on the general fibre, the statement is trivial unless a general fibre $F$ satisfies $q(F) \neq 0$. Indeed
$\rk f_* \Omega_X \leq 1 + q(F)$, so $f_* \Omega_X \simeq T_1$ if $q(F)=0$.

\begin{proof}
Recall first that $f_* \Omega_X$ is a vector bundle, since a torsion-free sheaf on a curve is locally free. For the same reason the quotient sheaf $T_2 := \Omega_X/T_1$ is locally free, so we have an exact sequence of vector bundles
$$
0 \rightarrow T_1 \rightarrow f_* \Omega_X \rightarrow T_2 \rightarrow 0,
$$
our goal is to show that the sequence splits. Since $h^0(\PP^1, f_* \Omega_X)=q(X)=0$
and $\omega_{\PP^1} \rightarrow T_1$ is non-zero, we have $T_1 \simeq \sO_{\PP^1}(a)$ with
$a=-1$ or $a=-2$. We claim that the map 
$$
\iota: H^1(\PP^1, T_1) \rightarrow H^1(\PP^1, f_* \Omega_X)
$$
is injective. Assuming this for the time being, let us conclude: since
$h^0(\PP^1, f_* \Omega_X)=0$, the injectivity implies that $h^0(\PP^1, T_2)=0$.
Since $T_2$ is a vector bundle on $\PP^1$, it is thus antiample. 
Since $a=-1$ or $a=-2$, we easily deduce that 
$$
H^1(\PP^1, T_1 \otimes T_2^*) = 0.
$$
Thus the exact sequence splits.

{\em Proof of the claim.} If $a=-1$ we have $H^1(\PP^1, T_1)=0$ and the statement is trivial. Thus we can assume $a=-2$ and hence $T_1 \simeq \omega_{\PP^1}$. In this case  
$$
\iota: H^1(\PP^1, T_1) = H^1(\PP^1, \omega_{\PP^1}) \rightarrow H^1(\PP^1, f_* \Omega_X)
$$
composed with the natural map $H^1(\PP^1, f_* \Omega_X) \rightarrow H^1(X, \Omega_X)$
identifies to the pull-back
$$
f^*: H^1(\PP^1, \omega_{\PP^1}) \rightarrow H^1(X, \Omega_X).
$$
Since $X$ is K\"ahler, the pull-back $f^*$ is injective. This implies  the injectivity of $\iota$. 
\end{proof}

\begin{example} \label{examplepushisotrivial}
Let $C$ be a hyperelliptic curve of genus $g \geq 2$ and denote by $i_C$ the hyperelliptic involution.
Let $E$ be an elliptic curve and denote by $i_E$ the involution determined by the degree two cover
$E \rightarrow \PP^1$. We denote by $X$ the minimal resolution of $(C \times E)/\langle i_C \times i_E\rangle$, then $X$ admits a relatively minimal isotrivial elliptic fibration
$$
f : X \rightarrow \PP^1 \simeq C/\langle i_C \rangle.
$$
The elliptic fibration $f$ has exactly $2g+2$ singular fibres, all of Kodaira type $I_0^*$. In particular we have $K_X \simeq f^* M$ with $M$ a line bundle on $\PP^1$.

Looking at the action of the involution near the fixed points we see that 
$$
H^0(X, \Omega_X)=0, \qquad H^0(X, K_X) \simeq \C^g,
$$
so we have $\chi(\sO_X) = 1 + g$. By \cite[V, Prop.12.2]{BHPV04} we have
$$
\deg f_* \omega_{X/\PP^1} = g+1,
$$
in particular $f_* \omega_X$ is an ample line bundle.

The direct image $f_* \Omega_X$ behaves quite differently: since
$h^0(X, f_* \Omega_X) = q(X)=0$, the rank two vector bundle
$f_* \Omega_X$ is anti-ample.
In fact we claim that
$$
f_* \Omega_X \simeq \sO_{\PP^1}(-2) \oplus \sO_{\PP^1}(-m)
$$
with $m \geq g+1$. Thus the direct factor $T_2$ in Lemma \ref{lemma-splitting-image} can be arbitrarily negative.
\end{example}

\begin{proof}
Recall that all the singular fibres are of type $I_0^*$. Thus we have an exact sequence
$$
0 \rightarrow f^* \omega_{\PP^1} (D) \rightarrow \Omega_X \rightarrow \sI_Z \otimes \omega_{X/\PP^1}(-D) \rightarrow 0
$$
where for each singular fibre $D$ is the unique multiple component (the handle of the comb) and $Z$ is the intersection of the handle with the four teeth of the comb.
Pushing forward to $\PP^1$ we get a sequence
$$
0 \rightarrow \omega_{\PP^1} \rightarrow f_* \Omega_X \rightarrow f_* (\sI_Z \otimes \omega_{X/\PP^1}(-D))
$$
and the morphism 
$$
f_* \Omega_X \rightarrow f_* (\sI_Z \otimes \omega_{X/\PP^1}(-D))
$$
is not zero since $f$ is isotrivial. Note that $f_* (\sI_Z \otimes \omega_{X/\PP^1}(-D))$ is 
a torsion-free sheaf of rank one on $\PP^1$, so it is isomorphic to $\sO_{\PP^1}(d)$ for some $d \in \Z$. We are done if we show that $d \leq -(g+1)$:
since  $f_* (\sI_Z \otimes \omega_{X/\PP^1}(-D)) \subset f_* (\sI_Z \otimes \omega_{X/\PP^1})$ it is sufficient to show the statement for the latter.
Since $\omega_X \simeq f^* M$, we can apply the projection formula to obtain
$$
f_* (\sI_Z \otimes \omega_{X/\PP^1}) \simeq f_* (\sI_Z) \otimes f_* \omega_{X/\PP^1}.
$$ 
Since $Z$ is a finite set, we have
$$
 f_* (\sI_Z) \subset \sI_{f(Z)} \subset \sO_{\PP^1}.
$$ 
The set $f(Z)$ consists of $2g+2$ points and
$f_* \omega_{X/\PP^1}$ has degree $g+1$, this shows the statement.
\end{proof}

\begin{example}
The cotangent bundle of the manifold $X$ appearing
Example \ref{examplepushisotrivial} is not pseudoeffective
\cite[Thm.6.7]{HP20}, but we can modify the construction by replacing the hyperelliptic curve by a curve that 
is ramified double cover $C \rightarrow Y$ of an elliptic curve $Y$. The construction then leads to an isotrivial elliptic fibration $X \rightarrow Y$ having $2g-2$ singular fibres of type $I_0^*$. Applying the argument above we obtain that 
$$
f_* \Omega_X \simeq \Omega_Y \oplus L
$$ 
with $L$ a line bundle of degree at most $-g$.
\end{example}

\section{Extension of $\Omega_F$ and pseudoeffective subsheaves} 

In this section we consider vector bundles on a compact K\"ahler manifold  $X$ that are given by an extension of trivial line bundles
$$
	0\rightarrow \mathcal{O}_X \rightarrow V \rightarrow \mathcal{O}_X  ^{\oplus r} \rightarrow 0 .
$$
The extension class corresponding to the exact sequence is
an element
$$
\eta=(\eta_1, \ldots, \eta_{r}) \in H^1(X, (\mathcal{O}_X  ^{\oplus r})^*) 
\simeq H^1(X, \mathcal{O}_X)^{\oplus r}. 
$$
Looking at the coboundary map $H^0(X, \mathcal{O}_X  ^{\oplus r}) \rightarrow
H^1(X, \sO_X)$ we see that $\eta_1, \ldots, \eta_{r}$ are linear independent
 in $H^1(X, \mathcal{O}_X)$ if and only if $h^0(X, V)=1$.

In general we have $h^0(X, V)=1+ a$ for some $0 \leq a \leq r$, so the
sections of $V$ induce another extension 
$$
0\rightarrow \mathcal{O}_X ^{\oplus 1+a}\rightarrow V \rightarrow \mathcal{O}_X  ^{\oplus r-a} \rightarrow 0 .
$$
 Since $V$ is an extension of trivial vector bundles, we know that $\mathcal{O}_{\mathbb P (V)} (1)$ is a nef line bundle. There exists thus a possibly singular metric $h$ on $\mathcal{O}_{\mathbb P (V)} (1)$ such that $\frac{i}{2\pi} \Theta_h (\mathcal{O}_{\mathbb P (V)} (1)) \geq 0$. The aim of this section is to give a complete description of the metric $h$.

\begin{theorem}\label{genDPS}
	Let $X$ be a compact K\"ahler manifold and let $V$ be an extension
	$$
	0\rightarrow \mathcal{O}_X \rightarrow V \rightarrow \mathcal{O}_X  ^{\oplus r} \rightarrow 0 ,
	$$
such that $\dim H^0(X, V)=1+a$ for some $a\in \N$. The inclusion $	\mathcal{O}_X ^{\oplus 1+a} \rightarrow V$ induces a quotient morphism
\begin{equation}\label{11}
V^* \rightarrow 	(\mathcal{O}_X ^{\oplus 1+a})^\star   .
	\end{equation}
	Let $h$ be a possibly singular metric  on $\mathcal{O}_{\mathbb P (V)}(1)$ such that $i\Theta_h (\mathcal{O}_{\mathbb P (V)}(1)) \geq 0$. 
	Set $\varphi:= \log h^*$, which is a function 
	$$\varphi : V^* \rightarrow \mathbb R$$
	defined on $V^*$.
		\begin{itemize}
			\item Then there exists psh function $\psi$ on $\mathbb C^{1+a}$ such that $\varphi =  \psi \circ pr$, where $pr$ is the natural map $V^* \rightarrow (\mathcal{O}_X ^{\oplus 1+a})^* = X\times \mathbb C^{1+a} \rightarrow \mathbb C^{1+a}$.
			
		\item If $a =0$, then $\frac{i}{2\pi}  \Theta_h (\mathcal{O}_{\mathbb P (V)}(1))$ is the current of 
		integration along the divisor $\PP(\mathcal{O}_X  ^{\oplus r}) \subset \PP(V)$.
		\item If $a \geq 1$, let $\tau: \mathbb P (V) \dashrightarrow \mathbb P  (\mathcal{O}_X ^{\oplus 1+a}) $ be the rational map induced by \eqref{11}.
		Then there exists a semipositive current $T$ on $\mathbb P^{a}$ such that  $\frac{i}{2\pi} \Theta_h (\mathcal{O}_{\mathbb P (V)}(1)) = \tau^* pr^* T$\footnote{By Remark \ref{remark-projection} the projection $\tau$ is obtained as the composition of 
a blowup $\holom{\mu}{\widehat \PP(V)}{\PP(V)}$ and a smooth fibration $\holom{q}{\widehat \PP(V)}{\mathbb P  (\mathcal{O}_X ^{\oplus 1+a})}$. In particular the current
$$
\tau^* pr^* T := \mu_* q^* pr^* T
$$
is well-defined.}, 
where $pr$ is the projection $\mathbb P  (\mathcal{O}_X ^{\oplus 1+a}) = X\times \mathbb P^{a} \rightarrow  \mathbb P^{a}$.
	\end{itemize}
\end{theorem}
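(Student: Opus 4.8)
The plan is to translate the metric $h$ into a homogeneous weight and reduce the whole statement to the factorisation in the first bullet. A possibly singular metric $h$ on $\sO_{\PP(V)}(1)$ with $i\Theta_h\geq 0$ is the same datum as a plurisubharmonic weight $\varphi=\log h^*$ on the total space of $V^*$ (minus the zero section) that is logarithmically homogeneous along the fibres, i.e. $\varphi(\lambda\eta)=\log|\lambda|^2+\varphi(\eta)$. The $1+a$ global sections $s_0,\dots,s_a$ spanning $H^0(X,V)$ give, by the pairing $\eta\mapsto\langle\eta,s_j\rangle$, the holomorphic map $pr\colon V^*\to\C^{1+a}$; since fibrewise-linear holomorphic functions on the total space of $V^*$ are exactly the elements of $H^0(X,V)$, the components of $pr$ exhaust them. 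Thus the first bullet says that $\varphi$ depends on $\eta$ only through these $1+a$ linear holomorphic functions, and bullets two and three are then obtained by unwinding.

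First I would reduce the factorisation to the invariance of $\varphi$ under translations by the kernel subbundle $K:=\ker(pr)\subset V^*$: if $\varphi$ is constant on the fibres of $pr$ it descends to a function $\psi$ on the image, and plurisubharmonicity together with log-homogeneity of $\varphi$ force $\psi$ to be plurisubharmonic and log-homogeneous on $\C^{1+a}$. To prove the invariance I would exploit that $V$, being an extension of trivial bundles, is numerically flat, so the total space of $V^*$ carries a flat structure with unipotent monodromy assembled from harmonic $(0,1)$-forms representing the extension classes $\eta_1,\dots,\eta_r$; the hypothesis $\dim H^0(X,V)=1+a$ says exactly that, after splitting off the $a$ sections lying over the quotient, the residual classes are linearly independent, i.e. the residual extension admits no holomorphic splitting along the $K$-directions.

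The heart of the argument --- and the step I expect to be the main obstacle --- is to convert this absence of a splitting into the invariance of $\varphi$. The mechanism is a dichotomy: restricting $\varphi$ to the fibres of $pr$ (which fibre over the compact base $X$ with affine fibres modelled on $K$), any genuine dependence of $\varphi$ on a $K$-direction would force the Riesz/Lelong data of these fibrewise subharmonic restrictions to vary holomorphically over $X$ and, through the positivity of direct images in Proposition~\ref{exvec}, to assemble into a holomorphic splitting of the extension, that is, into an extra element of $H^0(X,V)$ --- contradicting $\dim H^0(X,V)=1+a$. (In the rank-two model on an elliptic curve this is transparent: a non-trivial dependence would be $\log|\eta_1-\gamma(z)|^2$ with $\gamma$ holomorphic and $\Lambda$-equivariant, i.e. a splitting of the Atiyah bundle, which does not exist.) It is precisely here that soft potential theory is insufficient and the direct-image machinery is needed; everything else is formal.

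Granting the factorisation $\varphi=\psi\circ pr$, I would finish as follows. If $a=0$ then $\psi$ is a log-homogeneous plurisubharmonic function on $\C$, hence $\psi(w)=\log|w|^2+c$; therefore $\varphi=\log|\langle\cdot,s_0\rangle|^2+c$ and Poincar\'e--Lelong gives $\tfrac{i}{2\pi}\Theta_h=[\{\langle\cdot,s_0\rangle=0\}]=[\PP(\sO_X^{\oplus r})]$, since the zero locus of the section $s_0$ of $\sO_{\PP(V)}(1)$ is exactly the divisor $\PP(\sO_X^{\oplus r})$. If $a\geq 1$, the log-homogeneous plurisubharmonic $\psi$ on $\C^{1+a}\setminus\{0\}$ descends to a closed positive current $T$ on $\PP^{a}=\PP(\C^{1+a})$, and pulling back through $pr\colon \PP(\sO_X^{\oplus 1+a})=X\times\PP^{a}\to\PP^{a}$ and through the rational map $\tau$ --- resolved by the blowup $\mu$ and the smooth fibration $q$ of the footnote --- yields $\tfrac{i}{2\pi}\Theta_h=\tau^*pr^*T$. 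The only point requiring care is that $\mu_*q^*pr^*T$ is well defined and agrees with $dd^c\varphi$ off the indeterminacy locus of $\tau$, after which equality of the two closed positive currents extends across that locus by the support and homogeneity considerations already set up.
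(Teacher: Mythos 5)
Your framing (metric $\leftrightarrow$ log-homogeneous psh weight $\varphi$ on $V^*$, factorisation through $pr$ as the one real assertion, and the endgame deducing the two bullets from $\varphi=\psi\circ pr$ via Poincar\'e--Lelong for $a=0$ and descent of $\psi$ to a positive current on $\PP^a$ for $a\geq 1$) is correct and matches the paper. But the step you yourself flag as ``the heart of the argument'' is exactly where the proposal stops being a proof. Your proposed mechanism --- that any dependence of $\varphi$ on a $K$-direction would make ``the Riesz/Lelong data of the fibrewise subharmonic restrictions vary holomorphically over $X$'' and assemble, via Proposition \ref{exvec}, into a splitting of the extension --- is not an argument and, as stated, is false for general positive currents: the Lelong numbers of a closed positive current are merely upper semicontinuous, and outside the divisorial case there is no holomorphic object to assemble. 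Your elliptic-curve parenthetical silently assumes the dependence has the divisorial shape $\log|\eta_1-\gamma(z)|^2$; already handling the divisorial part in general requires the numerical flatness of the direct images $\pi_*(K_{\PP(V)/X}\otimes\sO_{\PP(V)}(m)\otimes\sI(m_1h))$ together with Wu's theorem (Lemma \ref{singlocus}) and the algebraic uniqueness statement (Lemma \ref{uniquesect}), and the non-divisorial residue requires the whole $L^2$/Cao--P\u aun/Hartogs induction that constitutes the proof of Proposition \ref{exvec} --- which you cite as a black box but which is the hard content of the theorem in the case $a=0$.

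There is also a structural misalignment worth naming: the paper proves the invariance by two \emph{different} mechanisms that your sketch conflates, and you assign the heavy machinery to the wrong one. After trivialising $V^*$ on the universal cover (numerical flatness plus Simpson--Deng), the invariance \emph{along $X$} is what the direct-image techniques establish: Lemma \ref{flatcuv} does this by an induction on rank, slicing $V^*$ by the subbundles $V_s^*$ attached to generic sections $s\in H^0(X,\sO_X^{\oplus r_1})$, with Proposition \ref{exvec} as the base case. The invariance \emph{along the kernel directions} --- the conversion of ``no extra sections'' into independence of $\psi$ from $w_1,\dots,w_{r-a}$ --- is then achieved by soft means, precisely the potential theory you dismiss as insufficient: Lemma \ref{newbase} puts the monodromy in a normal form in which the translation vectors $\{(b_1(\Lambda),\dots,b_{r-a}(\Lambda))\}$ span $\C^{r-a}$, and the $\pi_1$-invariance relation makes $\lambda\mapsto\psi(w_1+\lambda b_1(\Lambda),\dots)$ a $1$-periodic psh function of logarithmic growth, hence constant by the Liouville argument of \cite[Example 1.7]{DPS94}. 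Without this periodicity step the hypothesis $\dim H^0(X,V)=1+a$ is never actually used to kill the $K$-dependence, so the decisive implication in your proposal remains unproved.
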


Note that the case $a=0$ corresponds to the statement of Theorem \ref{theorem-dpsgen}. Before proving Theorem \ref{genDPS}, we need some preparations. 

\begin{remark} \label{remark-projection}
For the convenience of the reader let us recall some basic facts about projections from a quotient bundle. Let
\begin{equation} \label{extension-one}
0 \rightarrow K \rightarrow V \rightarrow Q \rightarrow 0
\end{equation}
be an extension of vector bundles over a manifold $X$, so that we have an inclusion 
of projectivised bundles $\PP(Q) \subset \PP(V)$. Denote by
$\holom{\pi_K}{\PP(K)}{X}$ the projectivisation of $K$.

Let $\holom{\mu}{\widehat \PP(V)}{\PP(V)}$
be the blow-up of $\PP(V)$ along $\PP(Q)$, and denote by $E$ the exceptional divisor. We have a fibration
$$
\holom{q}{\widehat \PP(V)}{\PP(K)}
$$
such that $\mu^* \sO_{\PP(V)}(1) \simeq q^* \sO_{\PP(K)}(1) \otimes \sO_{\widehat \PP(V)}(E)$.
Moreover we have  $\widehat \PP(V) \simeq \PP(W)$ where $W \rightarrow \PP(K)$
is a vector bundle given by an extension
\begin{equation} \label{extension-two}
0 \rightarrow  \sO_{\PP(K)}(1)  \rightarrow W := q_*  \mu^* \sO_{\PP(V)}(1) \rightarrow \pi_K^* Q \rightarrow 0.
\end{equation}
The extension splits on every fibre of $\pi_K$, but its global extension class
is in 
$$
H^1(\PP(K), \sO_{\PP(K)}(1) \otimes \pi_K^* Q^*) \simeq H^1(X, K \otimes Q^*).
$$
In fact the extension classes of \eqref{extension-one} and \eqref{extension-two}
coincide. 
\end{remark}

\begin{remark} \label{remark-numerically-flat}
In the situation of Theorem \ref{genDPS} the vector bundle $V$ is numerically flat, thanks to \cite{Sim92, Den21}, we know that $V$ is a local system. We consider the universal cover $\sigma: \widetilde{X} \rightarrow X$ and  obtain the diagram	
\begin{equation}\label{comdia}
		\begin{CD}
\mathbb P (\sigma^* V) \simeq \widetilde{X}  \times \mathbb P^{r}@>{\sigma_V}>> \mathbb P (V) \\
	@V \tilde \pi VV      @VV\pi V  \\ 
	\widetilde{X} @>>{\sigma}>   X
\end{CD}	
\end{equation}
where the isomorphism $\mathbb P (\sigma^* V) \simeq \widetilde{X}  \times \mathbb P^{r}$ is induced by the flat connection of the local system $V$. Fix a point $x\in X$ and a base of $V_x$. The flat connection induces a representation
\begin{equation}\label{grourep}
	\rho: \pi_1 (X) \to GL (r+1,\mathbb C) .
	\end{equation}
\end{remark}

With respect to this isomorphism, we have the following lemma for the divisorial part of $i\Theta_h (\mathcal{O}_{\mathbb P (V)}(1)) $.

\begin{lemma}\label{singlocus}
	Let $X$ be a compact K\"ahler manifold and let $V$ be a rank $r$ numerically flat vector bundle on $X$. Let $h$ be a possibly singular metric on $\mathcal{O}_{\mathbb P (V)} (1)$ such $i\Theta_h (\mathcal{O}_{\mathbb P (V)} (1)) \geq 0$. We consider  Siu's decomposition 
	$$
	i\Theta_h (\mathcal{O}_{\mathbb P (V)} (1)) = \sum a_i E_i + \Theta_0 ,
	$$
	where $E_i$ are divisors, and $\Theta_0\geq 0$ is a current where the locus of  positive Lelong number is of codimension at least $2$.
	
Following the notations in the Diagram \eqref{comdia},	if $\pi (E_i) =X$, then there exists a hypersurface $Z\subset \mathbb P^{r-1} $ such that 
	$$\sigma_V ^{-1} (E_i) = \pi_2 ^{-1} (Z) ,$$ 
	where $\pi_2 :  \widetilde{X}  \times \mathbb P^{r } \rightarrow \mathbb P^{r} $ is the natural projection. In particular, $c_1 (E_i) = \lambda_i c_1 (\mathcal{O}_{\mathbb P (V)} (1))$ for certain constant $\lambda_i \geq 0$.
\end{lemma}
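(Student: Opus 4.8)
The plan is to combine the local-system structure recorded in Remark \ref{remark-numerically-flat} with a numerical analysis of the divisor $E_i$ and with the flatness of global sections of numerically flat bundles.

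First I would make algebraic what it means for a prime divisor $E_i$ with $\pi(E_i)=X$ to be a divisorial component of $i\Theta_h(\sO_{\PP(V)}(1))$. Since $\pic(\PP(V))=\pi^*\pic(X)\oplus\Z\,\zeta$ with $\zeta:=c_1(\sO_{\PP(V)}(1))$, we may write $\sO_{\PP(V)}(E_i)\simeq\sO_{\PP(V)}(d)\otimes\pi^*L$ for a unique $d\in\N$ and $L\in\pic(X)$. As $E_i$ dominates $X$ and is not a fibre pull-back, $d=E_i\cdot(\text{line in a fibre})\geq 1$, and for general $x$ the slice $Z_x:=E_i\cap\PP(V_x)\subset\PP^{r-1}$ is a hypersurface of degree $d$. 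Via $\pi_*\sO_{\PP(V)}(d)\simeq S^dV$ the divisor $E_i$ is cut out by a nonzero section $s\in H^0(X,S^dV\otimes L)$, and in the flat trivialisation of Diagram \eqref{comdia} the slice $Z_x$ is precisely $\{\sigma^*s(y)=0\}\subset\PP^{r-1}$. Hence $\sigma_V^{-1}(E_i)=\pi_2^{-1}(Z)$ for a fixed $Z$ if and only if the classifying map $\Phi\colon\widetilde X\to|\sO_{\PP^{r-1}}(d)|$, $y\mapsto[\sigma^*s(y)]$, is constant; this is the statement to be proved.

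Next I would pin down $L$ using positivity. From Siu's decomposition $a_i[E_i]\leq i\Theta_h(\sO_{\PP(V)}(1))$, and restricting this inequality of positive currents to a general fibre (where the restriction has mass $1$) gives $a_id\leq1$. For the horizontal part I would push forward: wedging the closed positive current $\Theta_h-a_i[E_i]\geq0$ with the nef class $\zeta^{r-1}$ and applying $\pi_*$ produces a pseudoeffective class on $X$. Since $V$ is numerically flat, $\pi_*\zeta^{r}=s_1(V)=-c_1(V)=0$, while $\pi_*(\zeta^{r-1}\wedge[E_i])=c_1(L)$ by the projection formula; therefore $-a_ic_1(L)$ is pseudoeffective, i.e. $\deg_\omega L\leq 0$. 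On the other hand $s$ induces an inclusion $L^{-1}\hookrightarrow S^dV$ of a line bundle into a numerically flat (hence $\omega$-semistable of slope zero) bundle, so $\deg_\omega L\geq 0$. A pseudoeffective class of degree zero vanishes, whence $c_1(L)=0$. In particular $c_1(E_i)=d\,\zeta+\pi^*c_1(L)=d\,\zeta$, which already gives the final assertion of the lemma with $\lambda_i=d$. Moreover $W:=S^dV\otimes L$ is now numerically flat.

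It remains to prove that $\Phi$ is constant, and this is the heart of the matter. Here I would invoke that a global holomorphic section of a numerically flat bundle on a compact K\"ahler manifold is flat for the canonical flat connection underlying the local system of Remark \ref{remark-numerically-flat}: applied to $s\in H^0(X,W)$ this says that $\sigma^*s$ is a parallel section of the trivialised bundle $\sigma^*W\simeq\widetilde X\times S^dV_x$, hence a constant vector, so $\Phi$ is constant and $Z:=\{s=0\}$ is a well-defined degree-$d$ hypersurface of $\PP^{r-1}$ with $\sigma_V^{-1}(E_i)=\pi_2^{-1}(Z)$. The flatness of $s$ is the main obstacle, and I expect to establish it by reducing, via the numerically flat structure theory (DPS together with \cite{Sim92,Den21}), to the Hermitian-flat case, where a Bochner--Kodaira identity forces $H^0$ to consist of parallel sections, and then propagating through the defining extensions. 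Alternatively one can argue in the spirit of \cite{DPS94}: the pulled-back current is invariant under the diagonal action $(y,[v])\mapsto(\gamma\cdot y,\rho(\gamma)[v])$ of $\pi_1(X)$, and a divisorial component of a positive current in the class $\pi_2^*c_1(\sO_{\PP^{r-1}}(1))$ that dominates $\widetilde X$ and is invariant must be pulled back from $\PP^{r-1}$, with the resulting $Z$ necessarily $\rho(\pi_1(X))$-invariant.
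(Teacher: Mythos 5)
Your proposal is correct in substance but takes a genuinely different route from the paper. The paper produces its flat objects analytically: it studies the twisted direct images $\pi_*\bigl(K_{\PP(V)/X}\otimes\sO_{\PP(V)}(m)\otimes\mathcal{I}(m_1h)\bigr)$, shows via the positivity of direct images of \cite{PT18} that they are weakly positively curved, deduces that their first Chern class vanishes from the inclusion into the numerically flat bundle $\pi_*\bigl(K_{\PP(V)/X}\otimes\sO_{\PP(V)}(m)\bigr)$, and then invokes \cite{Wu22} to conclude that these sheaves are numerically flat with flat connection compatible with that of $V$; the divisors $E_i$ are then read off fibrewise as zero sets of polynomials with locally constant coefficients. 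You instead produce the relevant object algebraically: writing $\sO_{\PP(V)}(E_i)\simeq\sO_{\PP(V)}(d)\otimes\pi^*L$, you kill $L$ by playing the pushforward of $(\Theta_h-a_i[E_i])\wedge\zeta^{r-1}$ (giving $\deg_\omega L\le 0$) against semistability of the numerically flat bundle $S^dV$ (giving $\deg_\omega L\ge 0$), and then appeal to flatness of holomorphic sections of the numerically flat bundle $S^dV\otimes L$. This avoids \cite{PT18} and \cite{Wu22} entirely, and yields the slightly sharper conclusions that $\lambda_i=d\ge 1$ and that $E_i$ is globally cut out by a single section of $S^dV\otimes L$. (In fact you could even skip semistability: pushing forward $[E_i]\wedge\zeta^{r-1}$ in the same way shows $c_1(L)$ is itself pseudoeffective, so both $\pm c_1(L)$ pseudoeffective forces $c_1(L)=0$.) Do justify the pushforward step by the standard regularisation: $\zeta$ is nef, hence represented by smooth forms $u_\ep\ge-\ep\,\omega'$, one wedges the positive current with $(u_\ep+\ep\,\omega')^{r-1}$, pushes forward, and passes to the limit in the closed pseudoeffective cone --- this matters here precisely because $\zeta$ need not contain a smooth semipositive form, which is the theme of the paper.

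The one place where your argument is only a sketch is what you correctly call the heart: that a holomorphic section of a numerically flat bundle on a compact K\"ahler manifold is parallel for the flat connection furnished by \cite{Sim92, Den21}. This is exactly \cite[Lemma 4.1]{Cao15}, which the paper itself invokes in the proof of Lemma \ref{uniquesects}, so you should cite it rather than re-derive it. Your proposed derivation (Bochner in the Hermitian-flat case, then ``propagating through the defining extensions'') is incomplete as stated: in the inductive step where the projected section $\bar s\in H^0(X,W/W_{p-1})$ is a nonzero parallel section, you still have to show that $\nabla s=0$ and not merely that $\nabla s$ is a $\nabla$-closed $1$-form with values in $W_{p-1}$; this requires the compatibility of the holomorphic filtration of \cite{DPS94} with the flat structure, i.e. genuine input from the Simpson--Deng correspondence rather than a Bochner identity alone. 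With that citation in place, your proof is complete.
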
	

\begin{proof}
	Let $m, m_1$ be two arbitrary positive integers such that $m \geq m_1$. 
	We have the natural inclusion
	\begin{equation}\label{inclu}
		\pi_* (K_{\mathbb P (V) /X} \otimes \mathcal{O}_{\mathbb P (V)} (m) \otimes \mathcal{I} (m_1 h)) \subset \pi_* (K_{\mathbb P (V) /X} \otimes \mathcal{O}_{\mathbb P (V)} (m)) .
		\end{equation}
	By assumption, $\pi_* (K_{\mathbb P (V) /X} \otimes \mathcal{O}_{\mathbb P (V)} (m))$ is numerically flat. 
	
	Since $\mathcal{O}_{\mathbb P (V)} (1)$ is nef and relatively ample, for every $\ep >0$, we can find a smooth metric $h_\ep$ on $\mathcal{O}_{\mathbb P (V)} (1)$ such that $i\Theta_{h_\ep} (\mathcal{O}_{\mathbb P (V)} (1)) \geq -\ep \pi^* \omega_X $. 
	Now we equip $\mathcal{O}_{\mathbb P (V)} (m)$ with the metric $h_\ep ^{\otimes (m-m_1)} \cdot h^{\otimes m_1}$. It induces a $L^2$ metric $H_\ep$ on $\pi_* (K_{\mathbb P (V) /X} \otimes \mathcal{O}_{\mathbb P (V)} (m) \otimes \mathcal{I} (m_1 h))$ and by \cite{PT18} we know that 
	$$i\Theta_{H_\ep} (\pi_* (K_{\mathbb P (V) /X} \otimes \mathcal{O}_{\mathbb P (V)} (m) \otimes \mathcal{I} (m_1 h))) \succeq -\ep \omega_X .$$
	In particular, $\pi_* (K_{\mathbb P (V) /X} \otimes \mathcal{O}_{\mathbb P (V)} (m) \otimes \mathcal{I} (m_1 h))$  is weakly positively curved cf. \cite[Definition 2.1]{CCM21}.
	Together with the inclusion \eqref{inclu} and the fact that $c_1 (\pi_* (K_{\mathbb P (V) /X} \otimes \mathcal{O}_{\mathbb P (V)} (m)))=0$, we know that 
	$$c_1 (\pi_* (K_{\mathbb P (V) /X} \otimes \mathcal{O}_{\mathbb P (V)} (m) \otimes \mathcal{I} (m_1 h)))=0.$$ 
By using \cite[Corollary, Page 3]{Wu22}, we know that $\pi_* (K_{\mathbb P (V) /X} \otimes \mathcal{O}_{\mathbb P (V)} (m) \otimes \mathcal{I} (m_1 h))$ is numerically flat and its flat connection is compatible with the flat connection of $V$.
	
	Now for $m\gg m_1 \gg 1$, we know that $\pi_* (K_{\mathbb P (V) /X} \otimes \mathcal{O}_{\mathbb P (V)} (m) \otimes \mathcal{I} (m_1 h))$ are fiberwisely some $(m-r+1)$-degree polynomials vanishing along $\sum E_i$. The numerically flatness implies that these polynomials are locally of constant coefficients. Therefore $E_i$ are invariant with respect to the flat connection. 
	\end{proof}

The following lemma is elementary, we give the details for the convenience of the reader:

\begin{lemma}\label{newbase}
		Let $X$ be a compact K\"ahler manifold and let $V$ be an extension
	\begin{equation}\label{ext1}
		0\rightarrow \mathcal{O}_X \rightarrow V \rightarrow \mathcal{O}_X ^{\oplus r} \rightarrow 0 .
	\end{equation}
We suppose that $\dim H^0 (X, V) =1+a$ for some $a \geq 0$.	Then the representation $\rho: \pi_1 (X) \to GL (r+1, \mathbb C)$ corresponding to the flat connection of $V^*$ can be taken as follows 
\[
\rho (\Lambda) = \begin{bmatrix}
	1 & 0 & 0 & \cdots& 0 & 0& b_{1} (\Lambda)\\
	0  & 1 & 0 & \cdots &  0 &0& \vdots \\
	\vdots & \vdots &  1 & 0& \ddots & 0 & b_{r-a} (\Lambda) \\
	\vdots & \vdots &  0 & 0 & \ddots& 0 & 0 \\
		\vdots & \vdots &  0 & 0& \ddots& 0 & \cdots \\
		\vdots & \vdots &  0 & 0& \ddots& 1 & 0 \\
	0 & \cdots & \cdots & \cdots & 0 & 0& 1
\end{bmatrix}  \qquad\text{ for } \Lambda\in \pi_1 (X).
\]
Moreover, the $\mathbb C$-linear combination of the vectors $\{ (b_1 (\Lambda), \cdots, b_{r-a} (\Lambda))\}_{\Lambda\in \pi_1 (X)}$ generates the total space $\mathbb C^{r-a}$.
\end{lemma}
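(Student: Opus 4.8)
The plan is to reinterpret the holomorphic extension \eqref{ext1} in terms of the monodromy of the local system $V^*$, and then to choose a basis adapted to the flat subbundle generated by the global sections. First I would dualise \eqref{ext1} to obtain
$$
0 \rightarrow \sO_X^{\oplus r} \rightarrow V^* \rightarrow \sO_X \rightarrow 0 .
$$
By Remark \ref{remark-numerically-flat} the bundle $V$, and hence also $V^*$, is numerically flat, so $V^*$ is a local system carrying a flat connection $\nabla$ with monodromy $\rho\colon \pi_1(X)\to GL(r+1,\C)$. Identifying flat sections with $\rho$-invariant vectors, the assertion becomes a purely matrix-theoretic statement about $\rho$.

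The main point is to show that the holomorphic subbundle $\sO_X^{\oplus r}\subset V^*$ is $\nabla$-flat and that $\rho$ acts trivially on it. Here I would use that $\sO_X^{\oplus r}$ is the image of the evaluation map $H^0(X,V^*)\otimes\sO_X\to V^*$, so it suffices to know that every global holomorphic section of the numerically flat local system $V^*$ is parallel, hence a $\rho$-fixed vector. This is the essential input, and I expect it to be the main obstacle: it follows from the description of numerically flat bundles as iterated extensions of Hermitian flat bundles together with the maximum principle on the compact Kähler manifold $X$ (a holomorphic section has plurisubharmonic logarithmic norm, hence constant norm, hence is parallel). Granting it, a flat frame $e_1,\dots,e_r$ of $\sO_X^{\oplus r}$ consists of $\rho$-fixed vectors; completing it to a basis $e_1,\dots,e_{r+1}$ of $\C^{r+1}$ gives
$$
\rho(\Lambda)\,e_i = e_i \quad (1\le i\le r), \qquad \rho(\Lambda)\,e_{r+1} = \sum_{i=1}^{r} c_i(\Lambda)\,e_i + \mu(\Lambda)\,e_{r+1}.
$$
Applying the same fact to the numerically flat quotient $V^*/\sO_X^{\oplus r}\simeq \sO_X$, whose nowhere-vanishing holomorphic section must then be parallel, forces the character $\mu$ to be trivial; thus $\rho$ is unipotent of the displayed block shape.

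Finally I would extract the combinatorial data. Writing out $\rho(\Lambda_1\Lambda_2)=\rho(\Lambda_1)\rho(\Lambda_2)$ on $e_{r+1}$ and using $\mu\equiv 1$ shows that $c=(c_1,\dots,c_r)\colon \pi_1(X)\to\C^r$ is a homomorphism into the additive group. For the dimension count I pass to the dual representation $\rho^\vee=({}^{t}\rho)^{-1}$, which governs $V=(V^*)^*$: a vector $(w,t)$ is $\rho^\vee$-invariant exactly when $\langle c(\Lambda),w\rangle=0$ for all $\Lambda$, so section-flatness applied to $V$ gives
$$
h^0(X,V)=1+\big(r-\dim_{\C}\mathrm{span}\{c(\Lambda):\Lambda\in\pi_1(X)\}\big).
$$
Comparing with the hypothesis $h^0(X,V)=1+a$ yields $\dim_{\C}\mathrm{span}\{c(\Lambda)\}=r-a$. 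It then remains to change the basis $e_1,\dots,e_r$ of the fixed subspace so that this span becomes $\langle e_1,\dots,e_{r-a}\rangle$; after this change $c_i\equiv 0$ for $i>r-a$, and renaming $c_i=b_i$ for $1\le i\le r-a$ produces exactly the asserted matrix, together with the final claim that the vectors $(b_1(\Lambda),\dots,b_{r-a}(\Lambda))$ span $\C^{r-a}$.
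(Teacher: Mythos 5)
Your overall architecture --- put the monodromy of $V^*$ in unipotent triangular form with the subbundle and the quotient acted on trivially, identify $H^0(X,V)$ with the $\rho^\vee$-invariant vectors, then normalise by a constant change of basis and count dimensions --- is exactly the paper's, and your linear-algebra steps are correct: the invariance condition $\langle c(\Lambda),w\rangle=0$ matches the paper's descent condition \eqref{flats}, the count $\dim_\C\mathrm{span}\{c(\Lambda)\}=r-a$ matches, and the final basis change on the fixed subspace is the paper's matrix $A$. The difference is how the unipotent form is obtained: the paper simply cites the construction in the proof of \cite[Thm A]{Den21} (and, implicitly, flatness of holomorphic sections via \cite[Lemma 4.1]{Cao15}), whereas you try to derive it from the principle that every holomorphic section of the numerically flat local system $V^*$ is parallel, justified by a maximum-principle sketch.

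That principle, as you state and sketch it, is where the proof breaks. It is false for an arbitrary flat connection inducing the given holomorphic structure: on any compact K\"ahler $X$ with $H^0(X,\Omega^1_X)\neq 0$, the connection $d+\omega$ on $\sO_X$ (with $0\neq\omega$ a holomorphic $1$-form) is flat, its $(0,1)$-part is $\bar\partial$ so the underlying holomorphic bundle is trivial, its monodromy is the nontrivial character $\Lambda\mapsto\exp\bigl(-\int_\Lambda\omega\bigr)$, and yet the holomorphic section $1$ satisfies $\nabla 1=\omega\neq 0$, so it is not parallel. Thus ``the flat connection of $V^*$'' is not canonical, and the lemma is an existence statement: one must \emph{choose} the flat structure adapted to the holomorphic extension class. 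Your psh argument cannot see this choice: $\log|s|_h$ is psh only when $h$ is a flat Hermitian metric, and such a metric exists only in the unitary (polystable, i.e.\ split) case --- precisely not the case of interest --- so the maximum principle only handles the Hermitian flat graded pieces, while the whole content lies in the extension steps. Concretely, Deng's construction lifts the extension class along the Hodge-theoretic surjection $H^1(X,\C)^{\oplus r}\to H^1(X,\sO_X)^{\oplus r}$, and it is exactly this lift that produces a representation fixing the subspace and acting trivially on the quotient. The same gap reappears when you apply your principle to $V^*/\sO_X^{\oplus r}\simeq\sO_X$ to kill the character $\mu$: the induced flat structure on the quotient is a priori a flat line bundle with trivial holomorphic structure and possibly nontrivial monodromy, which is exactly the counterexample above. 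To repair the proof, replace your ``essential input'' by the inputs the paper uses (the proof of \cite[Thm A]{Den21} for the normal form, and \cite[Lemma 4.1]{Cao15} for the flatness of sections, which you also need to equate $h^0(X,V)$ with the number of invariant vectors); the rest of your argument then goes through and coincides with the paper's.
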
	

\begin{proof}
Let $\pi: \widetilde{X} \rightarrow X$ be the universal cover. We fix a point $x\in X$.
Since $V^*$ is an extension of $\mathcal{O}_X$ by $\sO_X ^{\oplus r}$, by using \cite[Proof of Thm A]{Den21}, we can find a base $\{e^*_i\}_{i=1}^{r+1}$ of $V^* _x$ such that the representation of induced by the flat connection of $V^*$ under the base $\{e^*_i\}_{i=1}^{r+1}$  is of form 
\[
\rho (\Lambda) = \begin{bmatrix}
	1 & 0 & 0 & \cdots& b_{1} (\Lambda)\\
	0 & 1 & 0 & \cdots & b_2 (\Lambda)\\
	\vdots & 0 & 1 & \cdots & \cdots \\
	\vdots & \vdots &  0 & \ddots & b_{r-1} (\Lambda) \\
	\vdots & \vdots &  0 & \ddots & b_r (\Lambda) \\
	0 & \cdots & \cdots & \cdots & 1 
\end{bmatrix}  \qquad\text{ for } \Lambda\in \pi_1 (X).
\]
Namely, 
$$\rho (\Lambda) (e_i ^*) = e_i ^*  \qquad\text{for every }  i \leq r ,$$
and 
$$\rho (\Lambda) (e_{r+1} ^*) = e_{r+1} ^*  +\sum_{i\leq r} b_i (\Lambda) e^* _i  .$$
\medskip

Now let $\{e_i\}_{i=1}^{r+1}$ be the dual base of $\{e^*_i\}_{i=1}^{r+1}$.  
For any collection  $a_1, \ldots, a_{r+1} \in\mathbb \C$, the parallel translation of $\sum_{i=1}^{r+1} a_i e_i$ induces a flat section of $H^0 (\widetilde{X}, \pi^* V)$. By abusing a bit the notation, we still denote the induced flat section by $\sum_{i=1}^{r+1} a_i e_i$.
Now the flat section 
$$\sum_{i=1}^{r+1} a_i e_i \in H^0 (\widetilde{X}, \pi^* V)$$
comes from a section of $V$ if and only if for any $\lambda_1, \ldots, \lambda_{r+1} \in \C$ and any $\Lambda \in \pi_1 (X)$, we have
$$\langle \sum_{i=1}^{r+1} a_i e_i , \sum_{i=1}^{r+1} \lambda_i\ e^*_i  \rangle = \langle \sum_{i=1}^{r+1} a_i e_i , \rho (\Lambda)(\sum_{i=1}^{r+1} \lambda_i\ e^*_i ) \rangle $$
$$= \langle \sum_{i=1}^{r+1} a_i e_i , \sum_{i\leq r} \lambda_i e^*_i  + \lambda_{r+1}\cdot  (\sum_{i\leq r} b_i (\Lambda) e^* _i + e_{r+1} ^*) \rangle  .$$
Namely
$$\sum_{i=1}^{r+1} a_i \lambda_i = \sum_{i=1}^{r+1} a_i \lambda_i + \lambda_{r+1} \cdot (\sum_{i\leq r} b_i (\Lambda) a_i) \qquad\text{for every } \lambda_i \in \C$$
Therefore $\sum_{i=1}^{r+1} a_i e_i$ comes from a section of $H^0 (X, V)$ if and only if
\begin{equation}\label{flats}
	\sum_{i\leq r} b_i (\Lambda) a_i =0 \qquad\text{for any } \Lambda \in \pi_1 (X). 
\end{equation}

Notice that \eqref{flats} implies that  $e_{r+1}$ comes from a section of $H^0 (X, V)$. It corresponds to the trivial subline bundle in the exact sequence \eqref{ext1}. 

\medskip
By assumption, $\dim H^0 (X, V) =1+a$. Then $e_{r+1}$ is one section and 
we can find another $a$ linearly independent flat sections which we can write :  for $1\leq j\leq a$,
\begin{equation}\label{defa}
	s_j := \sum_{1\leq i\leq r} a_{j,i} e_i
\end{equation}
coming from $V$.  By using \eqref{flats},  we have
\begin{equation}\label{eqq}
	\sum_{1\leq i \leq r}  a_{j,i} b_i (\Lambda)=0 \qquad\text{for any } \Lambda \in \pi_1 (X), 1\leq j \leq a .
\end{equation}

Since $\{s_j\}$ are linear independent, we can find an invertible $r\times r$-matrix $A=(a_{j,i})$ such
that the first $a$ lines are given by the vector $(a_{j,1}, \ldots, a_{j,r})$ for $1\leq j\leq a$.
\medskip

Now we choose a new base $\widetilde{e}^*_{i}$ of  $V^* _x$:
$$e_{r+1} ^* = \widetilde{e}^*_{r+1} \qquad\text{and} \qquad e_i ^* = \sum_{1\leq j\leq r} a_{j,i } \widetilde{e}^*_{j} \text{ for } 1 \leq i\leq r .$$
For this new base, we still have
$$\rho (\Lambda) \widetilde{e}^*_{i} = \widetilde{e}^*_{i} \qquad\text {for every }  i\leq r.$$
Moreover, 
$$\rho (\Lambda) \widetilde{e}^*_{r+1} = \rho (\Lambda) (e_{r+1} ^*) =e_{r+1} ^*+ \sum_{i\leq r} b_i (\Lambda) e^* _i $$
$$= \widetilde{e}^*_{r+1} + \sum_{1\leq j\leq r}\big(\sum_{1\leq i\leq r} b_i (\Lambda) a_{j,i} \big) \cdot  \widetilde{e}^*_{j} .$$
Thanks to \eqref{eqq} we know that 
$$
\sum_{i\leq r} b_i (\Lambda) a_{j,i} =0 \qquad\text{for every } 1\leq j \leq a .$$
Therefore
$$\rho (\Lambda) \widetilde{e}^*_{r+1} = \sum_{a+1 \leq j\leq r} c_j (\Lambda )\widetilde{e}^*_{j} + \widetilde{e}^*_{r+1} .$$
After changing the orders of $\widetilde{e}^*_{j}$,  the representation $\pi_1 (X) \to GL (r+1, \mathbb C)$ under the new base is of the form 
\[
\rho (\Lambda) = \begin{bmatrix}
	1 & 0 & 0 & \cdots& c_{1} (\Lambda)\\
	0 & 1 & 0 & \cdots & c_2 (\Lambda)\\
	\vdots & 0 & 1 & \cdots & c_{r-a} (\Lambda)\\
	\vdots & \vdots &  0 & \ddots & 0 \\
	\vdots & \vdots &  0 & \ddots & \cdots  \\
	0 & \cdots & \cdots & \cdots & 1 
\end{bmatrix}  \qquad\text{ for } \Lambda\in \pi_1 (X).
\]	
\medskip

Finally, if the space generated by $\{(b_1 (\Lambda), \cdots , b_{r-a} (\Lambda)) \}_{\Lambda\in \pi_1 (X)} $ is a strict subspace in $\mathbb C^{r-a}$, then we can find some $c_i\in \mathbb C$ such that $\sum_{i\leq r-a} c_i b_i (\Lambda) =0$ for every $\Lambda \in \pi_1 (X)$.
By using \eqref{flats}, we know that $\sum_{i\leq r-a} c_i  e_i \in H^0 (X, V)$ and this section is linearly independent from the sections constructed above.  This contradicts $\dim H^0 (X, V)= a+1$.
\end{proof}	

The following lemma is a special case of Theorem \ref{genDPS} and will not be needed for its proof. However we find it quite instructive to see how the representation allows to directly prove the algebraic counterpart of the main theorem.

\begin{lemma}\label{uniquesects}
	Let $X$ be a compact K\"ahler manifold and let $V$ be an extension
	\begin{equation}\label{ext}
		0\rightarrow \mathcal{O}_X \rightarrow V \rightarrow \mathcal{O}_X ^{\oplus r} \rightarrow 0 .
	\end{equation}
	Then $H^0 (X, S^k V) = S^k H^0 (X, V)$ for every $k \in \N$.
\end{lemma}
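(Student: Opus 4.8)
The statement contains a trivial inclusion and a nontrivial equality; my plan is to reduce to a dimension count and then compute everything through the monodromy representation furnished by Lemma \ref{newbase}.

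\emph{Reduction to a dimension estimate.} The multiplication map $S^k H^0(X,V)\to H^0(X,S^kV)$ is always defined, and I would first observe that it is injective. Writing $\dim H^0(X,V)=1+a$, the global sections of $V$ span a trivial subbundle $\mathcal{O}_X^{\oplus(1+a)}\subseteq V$ (the second extension recalled at the start of this section). Applying the functor $S^k$ to this subbundle inclusion yields a subbundle $S^k\mathcal{O}_X^{\oplus(1+a)}\hookrightarrow S^kV$, and taking global sections identifies $S^kH^0(X,V)=H^0(X,S^k\mathcal{O}_X^{\oplus(1+a)})$ with a subspace of $H^0(X,S^kV)$. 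Thus it suffices to prove the reverse inequality $h^0(X,S^kV)\le\dim S^kH^0(X,V)=\binom{k+a}{k}$.

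\emph{Passing to the monodromy representation.} By Remark \ref{remark-numerically-flat} the bundle $V$ is numerically flat, hence a local system; the same then holds for $S^kV$, whose monodromy is $S^k\rho$. The crucial step is to identify $H^0(X,S^kV)$ with the space of flat (parallel) sections, that is, with the invariants $(S^kV_x)^{\pi_1(X)}$. Pulling back to the universal cover $\sigma\colon\widetilde X\to X$ and using the flat trivialization of \eqref{comdia}, a holomorphic section of $S^kV$ becomes a $\pi_1(X)$-equivariant holomorphic map $\widetilde X\to S^kV_x$. Running through the unipotent filtration attached to Lemma \ref{newbase}, the components lying in the invariant part are $\pi_1(X)$-invariant holomorphic functions on $\widetilde X$, hence constant because $X$ is compact; the remaining components would have to transform under $\pi_1(X)$ by a nonzero additive character $\tau$. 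Such a $\tau$ is, on one hand, the period homomorphism of a holomorphic $1$-form on $X$ (the differential of the offending component descends to $X$), hence of Hodge type $(1,0)$; on the other hand, by the construction of the local system of a numerically flat bundle (Lemma \ref{newbase}, cf. \cite{Den21}) the characters $c_j$ correspond, under $\Hom(\pi_1(X),\mathbb C)\cong H^1(X,\mathbb C)$, to the Dolbeault extension classes $\eta_j\in H^1(X,\mathcal{O}_X)=H^{0,1}(X)$, so $\tau$ is of type $(0,1)$. As $H^{1,0}(X)\cap H^{0,1}(X)=0$ we get $\tau=0$, every holomorphic section is parallel, and $H^0(X,S^kV)=(S^kV_x)^{\pi_1(X)}$. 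I expect this identification to be the main obstacle: controlling the character obstruction is precisely where the Hodge decomposition, and thus the Kähler hypothesis, enters.

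\emph{The linear algebra.} It remains to compute the invariants. Dualizing the normal form of Lemma \ref{newbase}, we may choose a basis $\tilde e_1,\dots,\tilde e_{r+1}$ of $V_x$ so that $\pi_1(X)$ acts through the commuting unipotent transformations $\tilde e_j\mapsto \tilde e_j-c_j(\Lambda)\,\tilde e_{r+1}$ for $1\le j\le r-a$, fixing $\tilde e_{r-a+1},\dots,\tilde e_{r+1}$, with the vectors $(c_1(\Lambda),\dots,c_{r-a}(\Lambda))$ spanning $\mathbb C^{r-a}$; in particular $V_x^{\pi_1(X)}=\langle \tilde e_{r-a+1},\dots,\tilde e_{r+1}\rangle=H^0(X,V)$. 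On the polynomial algebra $S^\bullet V_x=\mathbb C[\tilde e_1,\dots,\tilde e_{r+1}]$ this action sends $\tilde e_j\mapsto \tilde e_j-c_j(\Lambda)\tilde e_{r+1}$, and since the $c(\Lambda)$ span $\mathbb C^{r-a}$ the subgroup they generate is Zariski dense in the additive group $\mathbb C^{r-a}$; hence a polynomial is invariant if and only if it is annihilated by the infinitesimal generators $\tilde e_{r+1}\,\partial_{\tilde e_j}$ for $1\le j\le r-a$, i.e. if and only if it is independent of $\tilde e_1,\dots,\tilde e_{r-a}$. The degree-$k$ such polynomials are exactly $S^k\langle \tilde e_{r-a+1},\dots,\tilde e_{r+1}\rangle=S^kH^0(X,V)$. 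Combining with the previous step gives $h^0(X,S^kV)=\binom{k+a}{k}=\dim S^kH^0(X,V)$, which together with the injection of the first step yields $H^0(X,S^kV)=S^kH^0(X,V)$.
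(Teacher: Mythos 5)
Your proof is correct in substance but takes a genuinely different route at the one nontrivial analytic point. The paper's proof and yours share the same endgame: identify $H^0(X,S^kV)$ with the $\rho$-invariant degree-$k$ polynomials and then use the spanning property of the vectors $(b_1(\Lambda),\dots,b_{r-a}(\Lambda))$ from Lemma \ref{newbase} to kill all dependence on the first $r-a$ variables (the paper does this by induction on the degree via partial derivatives; your Zariski-density/infinitesimal-generator argument is an equivalent and arguably cleaner packaging, since the closure of the subgroup generated by a spanning set in $\mathbb G_a^{r-a}$ is everything). Where you diverge is the flatness of holomorphic sections: the paper disposes of this in one line by citing \cite[Lemma 4.1]{Cao15} (sections of numerically flat bundles are parallel), whereas you re-derive it by hand through the unipotent filtration and a Hodge-type argument. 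What your route buys is transparency: it makes visible exactly where the K\"ahler hypothesis enters ($H^{1,0}\cap H^{0,1}=0$), which the citation hides.

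Two points in your flatness argument deserve flagging. First, the load-bearing assertion that the characters $c_j$ are of pure type $(0,1)$ is \emph{not} contained in Lemma \ref{newbase}; it is a property of the specific flat structure produced by the Simpson--Deng correspondence applied to the Higgs bundle $(V,\theta=0)$, under which the $\mathrm{Ext}^1$-identification $H^1(X,\mathbb C)\simeq H^{0,1}\oplus H^0(\Omega^1_X)$ matches the bundle extension class with the $(0,1)$-part and the Higgs field with the $(1,0)$-part, so $\theta=0$ forces purely $(0,1)$ monodromy characters. This needs to be extracted from \cite{Den21}; it is genuinely necessary, not a formality: the local system on a curve with monodromy $\begin{pmatrix} 1 & \int_\Lambda\omega\\ 0 & 1\end{pmatrix}$, $\omega$ a nonzero holomorphic $1$-form, has underlying holomorphic bundle $\mathcal O^{\oplus 2}$, and its holomorphic sections are visibly not all flat -- so for a ``wrong'' (mixed-type) flat structure your conclusion fails. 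Second, your phrase ``the remaining components would have to transform by an additive character'' is too quick when $k\geq 2$: the action $S^k\rho^\vee$ is multi-step unipotent, and a lower component a priori transforms by a \emph{polynomial} in the $c_j(\Lambda)$, including quadratic monomials $c_ic_j$, which are not characters. The fix is to peel off components in decreasing degree in $\tilde e_1,\dots,\tilde e_{r-a}$: at each stage the top remaining part is invariant hence constant, the next difference $f(\Lambda z)-f(z)$ is automatically a homomorphism $\pi_1(X)\to\mathbb C$ lying in the span of the $c_j$ (the higher coefficients having already been shown to vanish by linear independence of the $c_j$, which kills the would-be quadratic contributions), and your period/Hodge-type argument then applies. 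With these repairs your proof is a valid, self-contained alternative to the paper's citation-based one.
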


\begin{proof}
	We have $\dim H^0 (X, V) = 1+a$ for some $0 \leq a \leq r$. Fix a point $x_0 \in X$ and the base $\{e^*_i\}_{i=1}^{r+1}$ of $V^* _{x_0}$ in Lemma \ref{newbase}. Then the representation of the fundamental group corresponding to the local system $V^*$ under this base becomes 
\[
\rho (\Lambda) = \begin{bmatrix}
	1 & 0 & 0 & \cdots& 0 & 0& b_{1} (\Lambda)\\
	0  & 1 & 0 & \cdots &  0 &0& \vdots \\
	\vdots & \vdots &  1 & 0& \ddots & 0 & b_{r-a} (\Lambda) \\
	\vdots & \vdots &  0 & 0 & \ddots& 0 & 0 \\
	\vdots & \vdots &  0 & 0& \ddots& 0 & \cdots \\
	\vdots & \vdots &  0 & 0& \ddots& 1 & 0 \\
	0 & \cdots & \cdots & \cdots & 0 & 0& 1
\end{bmatrix}  \qquad\text{ for } \Lambda\in \pi_1 (X).
\]
	
Let $s\in H^0(X, S^k V)$. Then $s(x_0)$ can be seen as a degree $k$-homogeneous  polynomial $P$ on $ V^* _{x_0} =\mathbb C^{r+1}$.  By using \cite[Lemma 4.1]{Cao15}, we know that $s$ is flat.
	In particular the polynomial $P$  is invariant with respect to the $\rho$-action, namely 
	\begin{equation}\label{relation11}
		P (w_1,  \cdots, w_{r+1}) = P (w_1 + b_1 (\Lambda) w_{r+1},  \cdots,
		w_{r-a} + b_{r-a} (\Lambda) w_{r+1},  w_{r-a+1} ,\cdots,  w_{r+1} )
	\end{equation}
	for any  $\Lambda \in \pi_1 (X)$, where $(w_1, \cdots, w_{r+1}) \in V^* _{x_0} $ 
	are coefficients with respect to the base $\{e^*_i\}_{i=1}^{r+1}$.
	
	To prove the lemma, it is equivalent to show that $P (w)$ depends only on $w_{r-a+1}, \cdots, w_{r+1}$. We prove it by induction on the degree of $P$.
	\medskip
	
	If $k=1$, since $\dim H^0 (X, V)=a+1$, we know that the linear combinations of $w_{r-a+1}, \cdots, w_{r+1}$ are the only $1$-degree homogeneous  polynomials satisfying \eqref{relation11}.
	
	For the induction step, let $P (w_1, w_2, \ldots ,w_r, w_{r+1}) $ be a degree $k$-homogenous polynomial  satisfying \eqref{relation11}. 
	
	Since the action of $\rho(\Lambda)$ does not involve the variables $w_1, \ldots, w_r$, we can use the definition of the partial derivative to see that the
	degree $(k-1)$-homogenous polynomials
	$$
	\frac{\partial P}{\partial w_1 } ,\ldots, \frac{\partial P}{\partial w_r} 
	$$
	also satisfy \eqref{relation11}.
	By induction, 	for every $i \leq r$, we obtain that 
	$\frac{\partial P}{\partial w_i} (z)$ depends only on $w_{r-a+1}, \cdots, w_{r+1}$.

	As a consequence, we have
	$$
	P (w) = Q (w_{r-a+1}, \cdots, w_{r+1}) +\sum_{i \leq r-a} w_i P_i (w_{r-a+1}, \cdots, w_{r+1})  ,
	$$
	where the $P_i (w_{r-a+1}, \cdots, w_{r+1})$ and $Q (w_{r-a+1}, \cdots, w_{r+1})$ are  polynomials of degree $k-1$ depending only on $w_{r-a+1}, \cdots, w_{r+1}$.  Together with \eqref{relation11}, we obtain
	$$\sum_{1\leq i \leq r-a } w_i P_i (w_{r-a+1}, \cdots, w_{r+1}) = \sum_{1\leq i \leq r-a } (w_i+ b_i (\Lambda) w_{r+1}) \cdot P_i (w_{r-a+1}, \cdots, w_{r+1}) .$$
	Therefore
	\begin{equation}\label{relatt}
		\sum_{1\leq i \leq r-a }  b_i (\Lambda) \cdot P_i (w_{r-a + 1}, \cdots, w_{r+1}) \equiv 0 .
		\end{equation}
By Lemma \ref{newbase} the vectors
	$$\{(b_1 (\Lambda), \cdots , b_{r-a} (\Lambda)) \}_{\Lambda\in \pi_1 (X)} \subset \mathbb C^{r-a}$$
	generate the linear space $\mathbb C^{r-a}$. Then \eqref{relatt} implies that $P_i (w_{r-a+1}, \cdots, w_{r+1}) \equiv 0$. Therefore $P (w)= Q (w_{r-a+1}, \cdots, w_{r+1}) $, which depends only on $w_{r-a+1}, \cdots, w_{r+1}$. 
\end{proof}

As a corollary of Lemma \ref{singlocus} and Lemma \ref{uniquesects}, we obtain

\begin{corollary}\label{uniquesect}
	Let $X$ be a compact K\"ahler manifold and let $V$ be an extension
$$
0\rightarrow \mathcal{O}_X \rightarrow V \rightarrow \mathcal{O}_X ^{\oplus r} \rightarrow 0 
$$
such that that $h^0 (X, V) =1$. 
Then $h^0 (X, S^k V) =1$ for every $k \in \N$. In particular if 
$D \subset \PP(V)$ is an effective divisor such that $D \in |\sO_{\PP(V)}(k)|$,
then $D=k E$ where $E$ is the divisor induced by the canonical section of $V$.
\end{corollary}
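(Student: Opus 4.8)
The plan is to obtain both assertions directly from Lemma \ref{uniquesects}, using the standard identification $H^0(\PP(V), \sO_{\PP(V)}(k)) \cong H^0(X, S^k V)$ that comes from $\pi_* \sO_{\PP(V)}(k) = S^k V$ (and the vanishing of higher direct images for $k \geq 0$).

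First I would record that the hypothesis $h^0(X,V)=1$ is exactly the case $a=0$ of Lemma \ref{uniquesects}, which then reads $H^0(X, S^k V) = S^k H^0(X,V)$ for every $k$. Since $H^0(X,V)$ is one-dimensional, each symmetric power $S^k H^0(X,V)$ is again one-dimensional, whence $h^0(X, S^k V)=1$ for all $k$. This proves the first assertion, and moreover exhibits the unique section explicitly: if $s$ spans $H^0(X,V)$, then $s^k$ spans $H^0(X, S^k V)$.

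For the divisor statement I would pass to $\PP(V)$. The identification above yields $h^0(\PP(V), \sO_{\PP(V)}(k))=1$, so the linear system $|\sO_{\PP(V)}(k)|$ consists of at most a single effective divisor. The section $s$, which is the one coming from the inclusion $\sO_X \hookrightarrow V$, corresponds to a section of $\sO_{\PP(V)}(1)$ whose zero locus is precisely $E = \PP(\sO_X^{\oplus r})$: over a point $x$, i.e. at a rank-one quotient $V_x \to L$, this section vanishes exactly when the image of $\sO_X$ in $V_x$ dies in $L$, and this vanishing condition cuts out $\PP(\sO_X^{\oplus r}) \subset \PP(V)$. Consequently $s^k$ has divisor $kE$, and since $s^k$ spans the one-dimensional space $H^0(\PP(V),\sO_{\PP(V)}(k))$, every effective $D \in |\sO_{\PP(V)}(k)|$ is the divisor of a nonzero scalar multiple of $s^k$, that is, $D = kE$.

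There is no serious obstacle here, since the entire content is carried by Lemma \ref{uniquesects}; the only points to verify are the routine projection-formula identity $H^0(\PP(V), \sO_{\PP(V)}(k)) = H^0(X, S^k V)$ and the description of the vanishing locus of the canonical section. I note that the latter is consistent with Lemma \ref{singlocus}: the divisor $E$ dominates $X$ and lies in $|\sO_{\PP(V)}(1)|$, in agreement with the proportionality $c_1(E_i)=\lambda_i\, c_1(\sO_{\PP(V)}(1))$ for divisorial components surjecting onto $X$.
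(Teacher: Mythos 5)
Your proof is correct and is essentially the paper's intended argument: the paper states the corollary without a written proof, deriving it directly from Lemma \ref{uniquesects} (your case $a=0$), and your filling-in of the routine details --- the identification $H^0(\PP(V),\sO_{\PP(V)}(k)) \simeq H^0(X,S^kV)$, the fact that $s^k$ spans the one-dimensional space of sections, and the computation that the canonical section vanishes exactly along $\PP(\sO_X^{\oplus r})$ --- is exactly what the authors leave implicit. No gap to report.
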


We can now prove the key step for Theorem \ref{genDPS}:

\begin{proposition}\label{exvec}
	Let $X$ be a compact K\"ahler manifold and let $V$ be an extension
	\begin{equation}\label{extens}
		0\rightarrow \mathcal{O}_X \rightarrow V \rightarrow \mathcal{O}_X ^{\oplus r} \rightarrow 0 
		\end{equation}
such that $\dim H^0 (X, V) =1$. 
		Let $h$ be a possibly singular metric on $\mathcal{O}_{\mathbb P (V)} (1)$ such $i\Theta_h (\mathcal{O}_{\mathbb P (V)} (1)) \geq 0$. 
	Then $[\frac{i}{2\pi}\Theta_h (\mathcal{O}_{\mathbb P (V)} (1))] = [E]$, where $E$ is the divisor induced by the canonical section of $H^0 (X, V)$.
\end{proposition}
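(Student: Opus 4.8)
The plan is to analyse the positive closed current $T:=\frac{i}{2\pi}\Theta_h(\mathcal{O}_{\mathbb P (V)}(1))$, which lies in the class $\zeta:=c_1(\mathcal{O}_{\mathbb P (V)}(1))$, and to show it coincides with the current of integration $[E]$ along the divisor $E\in|\mathcal{O}_{\mathbb P (V)}(1)|$ cut out by the canonical section. Since $\dim H^0(X,V)=1$ we are in the case $a=0$ of Lemma \ref{newbase}, so $V$ is a numerically flat local system whose monodromy $\rho\colon\pi_1(X)\to GL(r+1,\C)$ is unipotent: in a suitable basis $\rho(\Lambda)$ fixes $e_1^*,\dots,e_r^*$ and sends $e_{r+1}^*\mapsto e_{r+1}^*+\sum_{i\le r}b_i(\Lambda)e_i^*$, where the vectors $\{b(\Lambda)\}$ span $\C^r$ over $\C$. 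On the universal cover $\sigma\colon\widetilde X\to X$ the bundle trivialises, giving the diagram \eqref{comdia} with $\mathbb P(\sigma^*V)\simeq\widetilde X\times\mathbb P^r$ and $\sigma_V^*\mathcal{O}_{\mathbb P (V)}(1)\simeq\pi_2^*\mathcal{O}_{\mathbb P^r}(1)$; in the induced homogeneous coordinates $[w_1:\dots:w_{r+1}]$ the divisor $E$ is the fibrewise hyperplane $\{w_{r+1}=0\}$, and the monodromy acts on the affine chart $\{w_{r+1}=1\}=\widetilde X\times\C^r$ by the translations $w\mapsto w+b(\Lambda)$ on the $\C^r$-factor (together with the deck action on $\widetilde X$).

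First I would dispose of the divisorial part of $T$. Writing the Siu decomposition $T=\sum_i a_i[E_i]+\Theta_0$, Lemma \ref{singlocus} shows that every component $E_i$ dominating $X$ pulls back on the universal cover to $\pi_2^{-1}(Z_i)$ for a hypersurface $Z_i\subset\mathbb P^r$ invariant under the monodromy. Because $\rho$ is unipotent, the line spanned by the defining polynomial of an irreducible invariant $Z_i$ is fixed (a unipotent operator has no eigenvalue other than $1$), so this polynomial is genuinely $\rho$-invariant; by the computation in the proof of Lemma \ref{uniquesects} (with $a=0$) it depends only on $w_{r+1}$, forcing $Z_i=\{w_{r+1}=0\}$. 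Hence the only component of the Siu decomposition dominating $X$ is $E$ itself, with some Lelong coefficient $\nu:=\nu(T,E)\ge 0$, and $[E]=\zeta$.

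The heart of the matter is then to show that the residual current $R:=T-\nu[E]\ge 0$ vanishes and that $\nu=1$. By construction $R$ is positive, closed, lies in $(1-\nu)\zeta$, and its Siu decomposition has no component dominating $X$. Pulling back via the flat trivialisation, $\widetilde R:=\sigma_V^*R$ is a positive closed current whose cohomology class $\pi_2^*c_1(\mathcal{O}_{\mathbb P^r}(1))$ is pulled back from the fibre and which is invariant under the $\pi_1$-action $(\tilde x,w)\mapsto(\Lambda\tilde x,\rho(\Lambda)w)$. The key claim is that $\widetilde R$ is itself a pullback $\widetilde R=\pi_2^*S$ for a positive closed current $S$ on $\mathbb P^r$; in particular this excludes both a diffuse and a vertical part of $R$. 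Granting it, invariance of $\widetilde R$ forces $S$ to be invariant under the translation group generated by $\{b(\Lambda)\}$; since this group is infinite and $S$ has finite mass, its trace measure on the chart $\C^r$ is a finite translation-invariant positive measure, hence zero. Thus $S$ is supported on $\{w_{r+1}=0\}$ and equals $c\,[\{w_{r+1}=0\}]$, so $R=c\,[E]$; as $R$ has no component dominating $X$ we get $c=0$, whence $R=0$ and, comparing with $[R]=(1-\nu)\zeta$, $\nu=1$ and $T=[E]$.

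The main obstacle is exactly this pullback-rigidity claim $\widetilde R=\pi_2^*S$, i.e. that a positive closed current in a fibrewise-pulled-back class with $\C$-spanning translation monodromy is base-independent. On the chart $\widetilde X\times\C^r$ the relative weight of $h$ is a psh function, invariant under $w\mapsto w+b(\Lambda)$ coupled with the deck action, and restricting $T$ to a general fibre shows it has at most logarithmic growth in $w$, uniformly over $\widetilde X$; one expects the invariance under vectors spanning $\C^r$ to propagate this rigidity from the fibres to the total space. Making this rigorous is precisely where the purely algebraic input of Corollary \ref{uniquesect} (which only controls the section ring, hence the divisorial behaviour) is insufficient, and the direct-image techniques behind Lemma \ref{singlocus} are indispensable: one reruns the construction of the numerically flat sheaves $\pi_*(K_{\mathbb P(V)/X}\otimes\mathcal{O}_{\mathbb P (V)}(m)\otimes\mathcal I(m_1h))$ to force the relevant multiplier ideals, and hence the diffuse curvature of $h$ away from $E$, to be invariant under the flat connection, thereby killing the transcendental part of $T$.
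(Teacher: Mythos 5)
Your first step is sound and coincides with the paper's: via Lemma \ref{singlocus} and the unipotent-monodromy/invariant-polynomial argument (Corollary \ref{uniquesect}), the only divisorial component of the Siu decomposition dominating $X$ is $E$ itself. The problem is everything after that. The step you yourself flag as ``the main obstacle'' --- the pullback-rigidity claim $\widetilde R=\pi_2^*S$, which simultaneously kills the diffuse part $\Theta_0$ and the vertical components $E_i'$ --- is precisely the analytic heart of the proposition, and your suggested repair does not work. Rerunning the construction of the numerically flat sheaves $\pi_*(K_{\PP(V)/X}\otimes\sO_{\PP(V)}(m)\otimes\mathcal I(m_1h))$ cannot ``force the diffuse curvature to be invariant under the flat connection'': that mechanism detects only the divisorial singularities of $h$ (the flat sections are polynomials vanishing along the $E_i$), while the residual current could a priori be a smooth semipositive form with trivial multiplier ideals, invisible to any such sheaf. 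Worse, within the paper's architecture your claim is circular: the statement that an invariant positive current on $\widetilde X\times\PP^r$ in the class $\pi_2^*c_1(\sO_{\PP^r}(1))$ factors through the fibre is essentially Lemma \ref{flatcuv}, whose proof for $r_1=1$ explicitly invokes Proposition \ref{exvec}. You would need an independent proof of the rigidity, and you do not supply one.

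The paper's actual mechanism is quite different and is entirely missing from your proposal. After normalising the residual into a semipositive metric $h_1$ on $\sO_{\PP(V)}(1)$, one argues by induction on $r$: project from $\PP(\sO_X)\subset\PP(V)$ onto $\PP(V_1)$ for a very general quotient (Remark \ref{remark-projection}), check that the Lelong numbers of $\mu^*i\Theta_{h_1}$ vanish on very general fibres of $q$ (a free-lines argument via \cite[II, Prop.3.7]{Kol95}), and apply P\u aun--Takayama \cite{PT18} to get a Griffiths-semipositive singular $L^2$ metric on the rank-two direct image $q_*(K_{\widehat\PP(V)/\PP(V_1)}\otimes\mu^*\sO_{\PP(V)}(3))$, which is a \emph{non-split} extension of $\sO_{\PP(V_1)}(1)$ by $\sO_{\PP(V_1)}(2)$. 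The induction hypothesis identifies the curvature of its determinant as $3[E_1]$, and the flatness criterion \cite[Cor 2.9 (b)]{CP17} then forces a metric splitting over $\PP(V_1)\setminus E_1$, which extends by Hartogs and contradicts the non-splitness of \eqref{extt1}. None of this --- the induction, the projection, the Lelong-number control on $q$-fibres, the splitting contradiction --- appears in your sketch. As a lesser point, your translation-invariance argument is also incorrect as stated: the trace measure of $S$ on the affine chart $\C^r$ is only locally finite there (Lebesgue measure is a translation-invariant locally finite counterexample), so finiteness of the Fubini--Study mass proves nothing directly; the correct argument, as in the proof of Theorem \ref{genDPS}, runs at the level of potentials, where a $1$-periodic psh function of logarithmic growth is constant. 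That slip is fixable, but the missing rigidity proof is not a detail --- it is the theorem.
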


\begin{proof}
We consider the fibration $\pi: \mathbb P (E) \to X$ and Siu's decomposition of the current
	$$\frac{i}{2\pi}\Theta_h (\mathcal{O}_{\mathbb P (V)} (1)) = \sum a_i [E_i] + \sum b_i [E' _i] +\Theta_0 ,$$
	where $\pi (E_i) =X$ and $\pi (E' _i) \subsetneq X$.
	By Lemma \ref{singlocus}, we know that 
	$$c_1 ([E_i]) = \lambda_i \cdot c_1 (\mathcal{O}_{\mathbb P (V)} (1)) $$
	for some constant $\lambda_i$ and $[E_i]$ is induced by a flat section. Thanks to Lemma \ref{uniquesect}, we know that $[E_i]= [E]$.
	Therefore 
	$$\frac{i}{2\pi}\Theta_h (\mathcal{O}_{\mathbb P (V)} (1)) = a [E] + \sum b_i [E' _i] +\Theta_0 ,$$
	
	\medskip
	
	If  $a=1$, then $\frac{i}{2\pi}\Theta_h (\mathcal{O}_{\mathbb P (V)} (1)) = [E] $ and the lemma is proved. 
	
	If $a<1$, then $\sum b_i [E' _i] +\Theta_0 $ is not zero,
	and we can find a metric $h_1$ on $\mathcal{O}_{\mathbb P (V)} (1)$ such that 
	$$
	\frac{i}{2\pi}\Theta_{h_1}  (\mathcal{O}_{\mathbb P (V)} (1)) = \frac{1}{1- a} \cdot (\sum b_i [E' _i] +\Theta_0 ) \qquad\text{ on } \mathbb P (V). 
	$$
	
	{\em 1st case. Assume that $r=1$.}
	Let $F$ be a generic fiber of $\pi$.
	Since $\dim F=1$ and the non zero Lelong number locus of $\Theta_0$ is of codimension at least $2$, 
	the Lelong number of $i\Theta_{h_1}  (\mathcal{O}_{\mathbb P (V)} (1)) $ on a very general fiber $F$ vanishes. 
	
	\medskip
	
	We now equip $\mathcal{O}_{\mathbb P (V)} (3)$ with the metric $h_1 ^{\otimes 3}$, and it induces a $L^2$-metric $h_{L^2}$ on 
	$$
	\pi_* (K_{\mathbb P (V) /X} \otimes \mathcal{O}_{\mathbb P (V)} (3)) = V \otimes \det V \simeq V. 
	$$
	Here we use the key point that  the restriction of the Lelong number of $i\Theta_{h_1}  (\mathcal{O}_{\mathbb P (V)} (1)) $ on a generic fiber $F$ vanishes. So we don't need to twist with the multiplier ideal sheaf in the direct image. 
	Thanks to \cite{PT18}, we know that 
	$$i\Theta_{h_{L^2}} (V \otimes \det V ) \succeq 0$$ 
	in the sense of Griffiths (singular version cf. for example \cite[Def 2.9]{Pau16}). Since $c_1(V)=0$ we can use \cite[Cor 2.9 (b)]{CP17} to obtain that $V$ is unitary flat and we obtain a contradiction with the fact that $V$ is a non trivial extension.	

{\em 2nd case. Assume that $r>1$.}
We argue by induction on $r$, the case $r=1$ being done above.

Choose a very general quotient line bundle $\mathcal{O}_X  ^{\oplus r} \rightarrow \sO_X$, and let $V_1 \subset V$ be the kernel of 
$V \rightarrow \mathcal{O}_X  ^{\oplus r} \rightarrow \sO_X.$
Then we have exact sequences
\begin{equation}\label{extt1}
	0 \rightarrow \mathcal{O}_X \rightarrow V_1 \rightarrow \mathcal{O}_X  ^{\oplus (r -1)} \rightarrow  0
	\end{equation} 
and 
\begin{equation}\label{extt}
	0 \rightarrow V_1 \rightarrow V \rightarrow \mathcal{O}_X \rightarrow 0,
\end{equation}
in particular $1 \leq h^0(X, V_1) \leq h^0(X, V) \leq 1$, so $V_1$ satisfies the induction
hypothesis. Let now $\mathbb P (V) \dashrightarrow \mathbb P (V_1)$ be the projection
from $\PP(\sO_X) \subset \PP(V)$ onto $\PP(V_1)$.
Following the notation of Remark \ref{remark-projection} we have a commutative diagram

$$
\xymatrix{
	\widehat{\mathbb P} (V) \ar[d]^\mu \ar[rd]^q & \\
	\mathbb P (V) \ar@{.>}[r] \ar[d] & \mathbb P (V_1) \ar[dl]\\
 X &  }
 $$
We claim that the Lelong number of the restriction of $\mu^*	i\Theta_{h_1} (\mathcal{O}_{\mathbb P (V)} (1)) $ on the very general fiber of $q$ vanishes. 

{\em Proof of the claim.}
Since $E_i'$ does not surject onto $X$, it is sufficient to show that
 the Lelong number of $\mu^* \Theta_0$ on the very general fiber of $q$ vanishes.  The singular locus $Z$ of $\Theta_0$ has irreducible components of codimension at least two, in particular $Z \cap \PP(\sO_X^{\oplus r})$
 is a proper subset of the divisor $\PP(\sO_X^{\oplus r}) \subset \PP(V)$.
 Since we have chosen a very general quotient line bundle $\mathcal{O}_X  ^{\oplus r} \rightarrow \sO_X$, the variety $\PP(\sO_X) \subset \PP(V)$ is not contained
 in $Z$. Choose now $x \in X$ a very general point such that 
 $$
 F := \fibre{\pi}{x} \simeq \mathbb P^{r}
 $$
 has the property that all irreducible components of $Z \cap F$ have codimension at least
 two and the point $y :=\PP(\sO_X) \cap F$ is not in $Z$.
The family of lines passing through the point $y$ form a free family, so by \cite[II, Prop.3.7]{Kol95} a very general line through $y$ is disjoint from $Z$. Since
the fibres of $q$ identify to the lines through $\PP(\sO_X)$, this shows the claim.

\medskip

Now we equip $\mu^* \mathcal{O}_{\mathbb P (V)} (3)$ with the metric $\mu^* h_1 ^{\otimes 3}$. Then $h_1 ^{\otimes 3}$ induces a $L^2$-metric $h_{L^2}$ on 
$q_* (K_{	\widehat{\mathbb P} (V) /  \mathbb P (V_1) } \otimes \mu^* \mathcal{O}_{\mathbb P (V)} (3))$ and we have
$$i\Theta_{h_{L^2}} (q_* (K_{	\widehat{\mathbb P} (V) /  \mathbb P (V_1) } \otimes \mu^* \mathcal{O}_{\mathbb P (V)} (3)))  \succeq 0  \qquad\text{ on }  \mathbb P (V_1) .$$
Here we use the fact that the Lelong number of the restriction of $\mu^*	i\Theta_{h_1} (\mathcal{O}_{\mathbb P (V)} (1)) $ on the generic fiber of $q$ vanishes. So we don't need to twist with the multiplier ideal sheaf with respect to the metric $\mu^* h_1 ^{\otimes 3}$.
By taking the determinant, we have
$$
i\Theta_{\det h_{L^2}} (\det q_* (K_{	\widehat{\mathbb P} (V) /  \mathbb P (V_1) } \otimes \mu^* \mathcal{O}_{\mathbb P (V)} (3)))  \geq 0  \qquad\text{ on }  \mathbb P (V_1) $$
in the sense of currents.
\medskip

From \eqref{extension-two} we compute that we have an extension
\begin{equation}\label{extennonsplit}
	0 \to \sO_{\PP(V_1)}(2) \to q_* (K_{	\widehat{\mathbb P} (V) /  \mathbb P (F) } \otimes \mu^* \mathcal{O}_{\mathbb P (V)} (3)) \rightarrow \sO_{\PP(V_1)}(1)\to 0. 
	\end{equation}
Since this extension is a twist of \eqref{extension-two}, and the extension \eqref{extt1} is not split,
we know by Remark \ref{remark-projection} that \eqref{extennonsplit} does not split.

\medskip

Since 
$$
\det q_* (K_{	\widehat{\mathbb P} (V) /  \mathbb P (V_1) } \otimes \mu^* \mathcal{O}_{\mathbb P (V)} (3))
\simeq
\mathcal{O}_{\mathbb P (V_1)}(3)
$$
and $V_1$ satisfies the induction hypothesis we know that
$$
\frac{i}{2\pi}\Theta_{\det h_{L^2}} (\det q_* (K_{	\widehat{\mathbb P} (V) /  \mathbb P (V_1) } \otimes \mu^* \mathcal{O}_{\mathbb P (V)} (3))) = 3 [E_1] ,
$$
where $E_1 \subset \PP (V_1)$ is the divisor induced by the canonical section
of $V_1$.

\medskip

Now we apply \cite[Cor 2.9 (b)]{CP17} to $i\Theta_{h_{L^2}} (q_* (K_{	\widehat{\mathbb P} (V) /  \mathbb P (V_1) } \otimes \mu^* \mathcal{O}_{\mathbb P (V)} (3)))$ over $\mathbb P (V_1)  \setminus E_1$, we know that 
\begin{equation}\label{herm}
	i\Theta_{h_{L^2}} (q_* (K_{	\widehat{\mathbb P} (V) /  \mathbb P (V_1) } \otimes \mu^* \mathcal{O}_{\mathbb P (V)} (3))) \equiv 0 \qquad\text{ on } \mathbb P (V_1)  \setminus E_1 .
	\end{equation}
Therefore the extension \eqref{extennonsplit} splits on $\mathbb P (V_1)  \setminus E_1 $, namely there is an injective holomorphic morphism 
$$\varphi: \mathcal{O}_{\mathbb P (V_1)} (b) \rightarrow q_* (K_{	\widehat{\mathbb P} (V) /  \mathbb P (V_1) } \otimes \mu^* \mathcal{O}_{\mathbb P (V)} (3)) \qquad\text{on } \mathbb P (V_1)  \setminus E_1 $$ 
which preserves also the metric. 
Note that the restriction of the extension \eqref{extennonsplit} on a generic fiber of $\mathbb P (V_1) \rightarrow X$ splits and the splitting preserves the metric.  Therefore $\varphi$ extends to some open subset of $\mathbb P (V_1)$ of codimension at least $2$. By Hartogs, $\varphi$ is well defined on the total space $\mathbb P (V_1)$. In particular, \eqref{extennonsplit} splits and we get a contradiction.
\end{proof}	

Proposition \ref{exvec} essentially settles the case $a=0$ of Theorem \ref{genDPS}. For the case $a >0$, we need another lemma.

\begin{lemma}\label{flatcuv}
	Let $X$ be a compact K\"ahler manifold and let $V$ be an extension
$$
0\rightarrow \mathcal{O}_X ^{\oplus r_1}  \rightarrow V \rightarrow \mathcal{O}_X  ^{\oplus r_2} \rightarrow 0
$$
such that $\dim H^0(X, V)=r_1$.
Let $h$ be a possibly singular metric  on $\mathcal{O}_{\mathbb P (V)}(1)$ such that $i\Theta_h (\mathcal{O}_{\mathbb P (V)}(1)) \geq 0$. 
Set $\varphi:= \log h^*$, which is a function defined on $V^*$:
$$\varphi : V^* \rightarrow \mathbb R \cup \{-\infty\} .$$

Let $\sigma: \widetilde{X} \rightarrow X$ be the universal cover and we consider the diagram as in \eqref{comdia}
\begin{equation}\label{comdia11}
	\begin{CD}
	\sigma^* V^* \simeq \widetilde{X}  \times \mathbb C^{r_1+r_2}@>{\sigma_{V^*}}>> V^*\\
		@V \tilde \pi VV      @VV\pi V  \\ 
		\widetilde{X} @>>{\sigma}>   X
	\end{CD}	
\end{equation}
Then there exists psh function $\psi$ on $\mathbb C^{r_1 +r_2}$ such that $\sigma_{V^*} ^* \varphi =  \psi \circ pr$, where $pr$ is the natural map $\sigma^* V^* \simeq \widetilde{X}  \times \mathbb C^{r_1+r_2} \rightarrow \mathbb C^{r_1 +r_2}$.
\end{lemma}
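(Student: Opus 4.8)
The plan is to reinterpret the metric as a function on a total space and then to prove that, after pulling back to the universal cover, it no longer depends on the base. A singular metric $h$ on $\mathcal{O}_{\mathbb{P}(V)}(1)$ with $i\Theta_h\geq 0$ is the same datum as a function $\varphi=\log h^{*}$ on the total space of $V^{*}$ which is psh and logarithmically homogeneous along the fibres, i.e.\ $\varphi(\lambda\xi)=\varphi(\xi)+\log|\lambda|^{2}$. Since $V$ is numerically flat it is a local system (Remark \ref{remark-numerically-flat}), so the flat frame trivialises $\sigma^{*}V^{*}\simeq\widetilde{X}\times\mathbb{C}^{r_1+r_2}$, and $\tilde\varphi:=\sigma_{V^{*}}^{*}\varphi$ is a psh, fibrewise log-homogeneous function on $\widetilde{X}\times\mathbb{C}^{r_1+r_2}$ satisfying the equivariance $\tilde\varphi(\gamma x,\rho(\gamma)\xi)=\tilde\varphi(x,\xi)$ for the unipotent representation $\rho$ of $\pi_1(X)$ produced in Lemma \ref{newbase}. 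The statement to be proved is exactly that $\tilde\varphi$ is independent of $x\in\widetilde{X}$: granting this, one sets $\psi(\xi):=\tilde\varphi(x_0,\xi)$, which is psh as the restriction of a psh function to the fibre $\{x_0\}\times\mathbb{C}^{r_1+r_2}$ and visibly satisfies $\tilde\varphi=\psi\circ pr$. Thus everything reduces to showing that each slice $g_\xi:=\tilde\varphi(\cdot,\xi)$, a psh function on $\widetilde{X}$, is constant.

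For $\xi$ in the subspace $I\subset\mathbb{C}^{r_1+r_2}$ fixed by $\rho$ (the subspace corresponding to $H^{0}(X,V)$) this is immediate: the equivariance degenerates to genuine $\pi_1(X)$-invariance $g_\xi(\gamma x)=g_\xi(x)$, so $g_\xi$ descends to a psh function on the compact manifold $X$ and is therefore constant by the maximum principle. This identifies $\tilde\varphi$ on the invariant directions with a fixed psh weight, and the only remaining task is to propagate constancy to the non-fixed directions, along which the unipotent orbits are unbounded.

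For the general direction the plan is to run the positivity-of-direct-images argument of Lemma \ref{singlocus}, but now to recover the full weight rather than only its divisorial part. First I would equip $\mathcal{O}_{\mathbb{P}(V)}(m)$ with $h^{\otimes m}$, twisting by the multiplier ideals $\mathcal{I}(m_1 h)$ as there, so that the induced $L^{2}$-metrics on $\pi_{*}\big(K_{\mathbb{P}(V)/X}\otimes\mathcal{O}_{\mathbb{P}(V)}(m)\otimes\mathcal{I}(m_1 h)\big)$, which are subsheaves of the numerically flat sheaves $S^{\bullet}V$, are positively curved by \cite{PT18} and have vanishing first Chern class. By \cite{Wu22} and \cite[Cor.\ 2.9(b)]{CP17} these direct images are then numerically flat with flat connection compatible with that of $V$. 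Since on each fibre $\mathbb{P}(V_x)\simeq\mathbb{P}^{r_1+r_2-1}$ the class $\mathcal{O}_{\mathbb{P}(V)}(1)$ is ample, the fibrewise weight $\varphi|_{\mathbb{P}(V_x)}$ is recovered from these $L^{2}$-data by Demailly's Bergman-kernel approximation, and the compatibility of the flat connections should force the limiting weight, read in the flat frame, to be $x$-independent. Combined with the invariant-direction case and fibrewise log-homogeneity, this produces the required psh function $\psi$ on $\mathbb{C}^{r_1+r_2}$.

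The hard part will be the final transfer. At finite level the Bergman weights $\tfrac1m\log\sum_j|\sigma_{j,x}|^{2}$ are genuinely $x$-dependent in the flat frame, since a positive Hermitian metric invariant under a non-trivial unipotent monodromy cannot be parallel; so $x$-independence can only emerge in the limit $m\to\infty$, once the sub-polynomial failure of parallelism has been absorbed by the $\tfrac1m\log$. Making this limit rigorous, while simultaneously controlling the multiplier-ideal corrections and the non-compactness of $\widetilde{X}$, is the main obstacle; the unipotent, hence polynomially controlled, nature of $\rho$ furnished by Lemma \ref{newbase} is what I expect to make it work.
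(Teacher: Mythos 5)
Your reduction is correct as far as it goes: identifying $h$ with the fibrewise log-homogeneous psh function $\varphi$ on $V^*$, pulling back to $\widetilde X\times\mathbb C^{r_1+r_2}$ via the flat frame, and observing that the claim is exactly the $x$-independence of $\tilde\varphi$ is the right setup, and your maximum-principle argument for the $\rho$-invariant directions is fine (though note the invariant subspace of the action on $V^*$ is the one dual to the quotient, i.e.\ it corresponds to $H^0(X,V^*)$, not to $H^0(X,V)$). But the proof has a genuine gap, and you have located it yourself: the claim that ``the compatibility of the flat connections should force the limiting weight, read in the flat frame, to be $x$-independent'' is precisely the content of the lemma in the non-invariant directions, and no mechanism is supplied for it. Worse, the machinery you propose cannot supply it: the multiplier-ideal-twisted direct images $\pi_*\bigl(K_{\PP(V)/X}\otimes\sO_{\PP(V)}(m)\otimes\mathcal I(m_1h)\bigr)$ only encode the singular part of $h$ (its multiplier ideals), which is why Lemma \ref{singlocus} concludes something only about the \emph{divisorial} components of the Siu decomposition. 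If $h$ has a non-flat but bounded (or merely non-divisorially singular) part, these sheaves are insensitive to it, the flat-connection compatibility from \cite{Wu22} says nothing about it, and the Bergman-kernel limit in the flat frame has no reason to lose its $x$-dependence. So the ``main obstacle'' you flag is not a technical estimate to be absorbed by $\tfrac1m\log$; it is the theorem itself, and the estimate you hope for is false at every finite level with no compensating convergence statement available.

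The paper closes this gap by a completely different device: induction on the total rank $r_1+r_2$. A generic section $s\in H^0(X,\sO_X^{\oplus r_1})$ gives a flat trivial subsheaf $\sO_X\hookrightarrow V$, and the dual $V_s^*\subset V^*$ of the quotient $V_s=V/\sO_X$ is a corank-one flat subbundle on which $\varphi$ restricts to a weight of the same type (for generic $s$ it is not identically $-\infty$); the induction hypothesis applied to $V_s$ makes $\tilde\varphi$ constant in $z$ on $\sigma^*V_s^*$, and since these subbundles sweep out a Zariski-open subset of $\sigma^*V^*$ as $s$ varies, $\tilde\varphi$ is constant in $z$ everywhere. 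The whole analytic weight of the proof is thereby concentrated in the base case $r_1=1$, which is Proposition \ref{exvec}: there the curvature current is shown to be exactly the divisor $[E]$ of the canonical section, via the projection $\PP(V)\dashrightarrow\PP(V_1)$, positivity of direct images \cite{PT18}, and the splitting criterion \cite[Cor 2.9 (b)]{CP17} applied against the non-split extension \eqref{extennonsplit} — ingredients you invoke, but in a global one-shot form where they do not suffice. If you want to salvage your outline, the slicing step is the idea you are missing: reduce the rank until the weight is forced to be the canonical divisorial one, rather than trying to reconstruct the full weight from flat direct-image data.
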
	

\begin{proof}
	We prove the theorem by induction on $r:=r_1 +r_2$.  
	
	If $r_1 +r_2 =2$,  thanks to Proposition \ref{exvec}, we know that  $\frac{i}{2\pi}\Theta_h (\mathcal{O}_{\mathbb P(V)} (1)) =[E]$, where $[E]$ is the divisor corresponding to $\mathbb P (\mathcal{O}_X ^{\oplus r_2} ) \subset \mathbb P (V)$.  Then $\sigma_{V^*} ^*\varphi = \psi \circ pr$, where $\psi = \log |z_1|^2 +C$ is a psh function on $\mathbb C^2$ for some constant $C$.
	
	\medskip
	
	Now we suppose by induction that the theorem holds for $r_1 +r_2 =r-1$. 
	If $r_1 =1$, the statement is proved in Proposition \ref{exvec} together with the above argument. From now on, we suppose that $r_1 \geq 2$.
	
	Let $(w_1, w_2) \in  \mathbb{C}^{r_1}\times  \mathbb{C}^{r_2}$ be the standard coordinates, and let $z \in \widetilde X$ be any point. 	
	Since $V^*$ is an extension
	$$0\rightarrow (\mathcal{O}_X ^{\oplus r_2})^* \rightarrow V^* \rightarrow (\mathcal{O}_X ^{\oplus r_1})^* \rightarrow 0 , $$
	the action of $\pi_1 (X)$ on $\sigma^* V^*$ is given by
	\begin{equation}\label{relation1}
		(z, w_1, w_2 ) \to ( \Psi (\Lambda, z) ,  w_1 ,  w_2 + M (\Lambda) \cdot  w_1)  \qquad\text{for } \Lambda\in \pi_1 (X), w_1 \in \mathbb{C}^{r_1}, w_2 \in \mathbb{C}^{r_2} ,
	\end{equation}
	where $\Psi (\Lambda, z)$ is the natural action $\Psi : \pi_1 (X) \times \widetilde{X} \rightarrow \widetilde{X}$, and $M (\Lambda)  $ is a $r_2 \times r_1$-matrix induced by the local system.
	
	\medskip
	
	Now we consider the psh function $\sigma_{V^*}^*\varphi$ on $ \sigma^* V^* \simeq\widetilde{X}  \times   \mathbb{C}^{r_1 }\times  \mathbb{C}^{r_2}$. 
	We take a general non zero section $s\in H^0 (X, \mathcal{O}_X ^{\oplus r_1})$. It corresponds to an injective morphism 
	\begin{equation}\label{injectind}
		\mathcal{O}_X \rightarrow \mathcal{O}_X ^{\oplus r_1}.
	\end{equation} 
	By composing it with $\mathcal{O}_X ^{\oplus r_1} \rightarrow V$, we obtain a filtration
	$$0 \rightarrow \mathcal{O}_X \rightarrow V \rightarrow V_s  := V/\mathcal{O}_X \rightarrow 0.$$
	By passing to duality, we know that $V_s ^* \subset V^*$ is a subbundle.  	Now for a generic section $s\in H^0 (X, \mathcal{O}_X ^{\oplus r_1})$, the restriction $\varphi |_{V_s ^*}$ is not identically $-\infty$. 
	
	We have $\dim H^0 (X, V_s ) = r_1 -1+ b$ for a certain $b \geq 0$ and hence an extension
	$$0 \rightarrow \mathcal{O}_X ^{\oplus r_1 -1 +b} \rightarrow V_s  \rightarrow \mathcal{O}_X ^{\oplus r_2 +1-b} \rightarrow 0 
	$$
	such that $h^0 (X, V_s ) = r_1 -1+ b$.
	By applying the induction hypothesis to $V_s$, 
	we know that $(\sigma_{V^*} ^*\varphi) |_{ \sigma^* V_s ^*}$ is constant with respect to $z \in \widetilde{X} $.  Since 
	the subbundles $\sigma^* V_s^*$
	cover a Zariski open subset of $\sigma^* V^*$, we know that $\sigma_{V^*} ^*\varphi $ is constant with respect to $z$. 
\end{proof}	

Now we are ready to prove the Theorem \ref{genDPS}.

\begin{proof}[Proof of Theorem \ref{genDPS}]

Fix a point $x\in X$. Thanks to Lemma \ref{newbase}, we can choose a base of $V^* _x$ such that the representation $\rho : \pi_1 (X) \to GL (r+1, \mathbb C)$ is of the form 
\[
\rho (\Lambda) = \begin{bmatrix}
	1 & 0 & 0 & \cdots& b_{1} (\Lambda)\\
	0 & 1 & 0 & \cdots & b_2 (\Lambda)\\
	\vdots & 0 & 1 & \cdots & b_{r-a} (\Lambda)\\
	\vdots & \vdots &  0 & \ddots & 0 \\
	\vdots & \vdots &  0 & \ddots & \cdots  \\
	0 & \cdots & \cdots & \cdots & 1 
\end{bmatrix}  \qquad\text{ for } \Lambda\in \pi_1 (X).
\]

Thanks to Lemma \ref{flatcuv}, we know that $\sigma_{V^*}^*\varphi = \psi \circ pr$ for some function $\psi$ on $\mathbb C^{r+1}$. Since $\psi \circ pr$ is the pull-back of some function on $V^*$, we obtain
	\begin{equation}\label{relation2}
		\psi( w_1, \cdots , w_{r+1} ) =\psi ( w_1 + b_1 (\Lambda) w_{r+1},  \cdots, w_{r-a} + b_{r-a} (\Lambda) w_{r+1},  w_{r-a+1} ,\cdots,  w_{r+1})
	\end{equation}
for any  $\Lambda \in \pi_1 (X)$.
	\smallskip

Fix a $\Lambda \in \pi_1 (X)$ and a point $(w_1, \cdots, w_{r+1}) \in \mathbb C^{r+1}$ such that $w_{r+1} \neq 0$ and $\psi (w_1, \cdots, w_{r+1})$ is finite.
We consider the function $S $ on $\mathbb C $ as follows
\begin{equation}\label{1per}
	S(\lambda) := \psi ( w_1 + \lambda \cdot b_1 (\Lambda),  \cdots, w_{r-a} + \lambda\cdot b_{r-a} (\Lambda) ,  w_{r-a+1} ,\cdots,  w_{r+1}) \text{ for } \lambda\in \mathbb C.
\end{equation}
Thanks to \eqref{relation2}, we know that $S(\lambda)= S (\lambda +w_{r+1})$. 
Then $S$
is a $1$-periodic psh function on $\mathbb C$ with logarithmic growth at infinity. 
As explained in the proof of \cite[Example 1.7]{DPS94}, $S$ is  a constant function.
\medskip

Now we fix some numbers $w_{r-a+1} ,\cdots,  w_{r+1} $ such that $w_{r+1} \neq 0$.
We consider the following function on $\mathbb C^{r-a} $: 
\begin{equation}\label{bounpsh}(w_1, \cdots, w_{r-a}) \in \mathbb C^{r-a}  \to \psi (w_1, \cdots, w_{r-a}, w_{r-a+1} ,\cdots,  w_{r+1} )
	\end{equation}
Since $S (\lambda)$ in \eqref{1per} is constant and 
$$
\{(b_1 (\Lambda), \cdots, b_{r-a} (\Lambda) \}_{\Lambda\in \pi_1 (X) }
$$ 
generates $\mathbb{C}^{r-a}$ as a $\C$-vector space by Lemma \ref{newbase}, the function \eqref{bounpsh} is thus constant. In other words, $\psi$ is constant with respect to $w_1, \cdots, w_{r-a}$.  Therefore $\varphi$ comes from a function on $\mathbb C^{1+a}$. 
\end{proof}	

It will be very interesting to prove Theorem \ref{genDPS} in a more general setting, for example when $V$ is an extension of two hermitian flat vector bundles.  However, the following example tell us that the situation will be rather complicated:

\begin{example} \label{example-bad-case}
Let $A$ be an elliptic curve and let $V_A$ be the Serre vector bundle, cf. Example \ref{example-DPS}. Let $V:= V_A^{\oplus 2}$, so 
$V$ is an extension of 
$$
0 \rightarrow \mathcal{O}_X ^{\oplus 2} \rightarrow V  \rightarrow \mathcal{O}_X ^{\oplus 2} \rightarrow 0
$$
such that $h^0(X, V)=h^0(X, \mathcal{O}_X ^{\oplus 2})=2$.
By \cite[Lemma 17, 1)]{Ati57} we have $h^0(X, V_A \otimes V_A)=2$, so 
$$
\C^4 \simeq H^0(X, S^2 V) \not\simeq S^2 H^0(X, V) \simeq \C^3. 
$$ 
Thus even the algebraic Lemma \ref{uniquesects} does not generalise to this setting.
More explicitly the representation
$$\rho : \pi_1 (X) \rightarrow \mbox{GL} (4, \mathbb C)$$
is given by matrices of the form
\[
\rho (\Lambda) = \begin{bmatrix}
	1 & b_{1} (\Lambda) & 0 & 0 \\
	0 & 1 & 0 & 0 \\
	0 & 0 & 1 & b_{1} (\Lambda)\\
	0 &  0 & 0 & 1 
\end{bmatrix}  \qquad\text{ for } \Lambda\in \pi_1 (X).
\]
In these coordinates $w_1^2, w_3 ^2, w_1 w_3$ are the obvious sections of $H^0 (X, S^2 V)$, but
$$ w_1 w_4 -w_2 w_3$$
is also invariant with respect to $\rho$-action, hence it defines also a flat section of $H^0 (X, S^2 V)$.
In particular, $\ln |w_3 w_2 -w_1 w_4|$ defines a singular metric on $\mathcal{O}_{\mathbb P (V)} (1)$ which depends on all four variables.
\end{example}	

\begin{proof}[Proof of Theorem \ref{theorem-direct-image}]
The restriction of the natural morphism $f^* f_* \Omega_X \rightarrow \Omega_X$
to a general fibre $A$ identifies to the evaluation map
$$
H^0(A, \Omega_X \otimes \sO_A) \otimes \sO_A \rightarrow \Omega_X \otimes \sO_A.
$$
Since $\Omega_X \otimes \sO_A$ is a numerically flat vector bundle, the image of the evaluation map is isomorphic to the trivial bundle $H^0(A, \Omega_X \otimes \sO_A) \otimes \sO_A$ (combine \cite[Lemma 1.20]{DPS94} with \cite[Prop.1.16]{DPS94}). In particular $f^* f_* \Omega_X \rightarrow \Omega_X$
is a saturated subbundle of $\Omega_X$ in a neighbourhood of a general
fibre, hence the restriction of its saturation $\sV \subset \Omega_X$ to $A$ is a trivial vector bundle.

The inclusion $\sV \rightarrow \Omega_X$ is injective as a morphism of vector bundle
in the complement of a subset $Z \subset X$ that has codimension at least two.
There the dual map
	$$
	 i : \Omega_X ^* \rightarrow \sV^*
	 $$
	 is surjective in the sense of vector bundles on $X \setminus Z$.

	Since $\Omega_X$ is pseudo-effective, there exists a possibly singular metric $h$ on $\mathcal{O}_{\PP (\Omega_X)} (1)$ such that 
	$i\Theta_h (\mathcal{O}_{\PP (\Omega_X)} (1)) \geq 0$. By the Poincaré-Lelong formula, $\log h^* : \Omega_X ^* \rightarrow \mathbb R$ is a psh function.
	We would like to prove that $\log h^* |_{\Omega_X ^* |_{X \setminus Z}}$ is the pull back of some psh function $\psi$ on $\sV^* |_{X \setminus Z}$. If it is proved, $e^{\psi}$ induces by duality a metric on $\mathcal{O}_{\PP (\sV |_{X\setminus Z})} (1)$ of semipositive curvature current. Then 
	$\sV |_{X\setminus Z}$ is pseudo-effective.
	
	Let $A$ be a generic fiber of $f: X\rightarrow Y$. Then 
	$$
	0 \rightarrow \sV \otimes \sO_A \rightarrow \Omega_X \otimes \sO_A \rightarrow Q \otimes \sO_A \rightarrow 0,
	$$
	is an extension over the torus $A$ and we have $\rk \sV \otimes \sO_A =  h^0 (A, \sV \otimes \sO_A) = h^0 (A, \Omega_X \otimes \sO_A)$.  
	Then we can apply Theorem \ref{genDPS}: namely $\log h^* |_{\Omega^* _X \otimes \sO_A  }$ comes from a psh function on $\sV^* \otimes \sO_A$. 
	As it holds for a generic fiber $A$, we can thus find a Zariski open set $U \subset X$ such that 
	$\log h^* |_{\Omega^* _X |_U }$ is the pull back of some function $\psi$ on $\sV^* |_U$, i.e.,
	$$\log h^*  = i^* \psi \qquad\text{on } \Omega^* _X |_U, $$
	where $i$ is the natural morphism $\Omega^* _X \rightarrow \sV^*$.
	In particular, $\psi $ is psh on $\sV^* |_U$. Let us show that $\psi$ extends as a psh function to the space $\sV^* |_{X\setminus Z}$:
	since $i$ is a the surjective morphism in the sense of vector bundle over $X\setminus Z$, the locally upper boundedness of $\log h^* $ implies that $\psi$ is locally upper bounded near a generic point of $(X\setminus Z) \setminus U$. Therefore $\psi$ can be extended as a psh function on $\sV^* |_{X\setminus Z}$. It induces a possibly singular metric $h_\sV$ on $\mathcal{O}_{\mathbb P (\sV) } (1)$ over $X\setminus Z$ such that
	$$i\Theta_{h_\sV} (\mathcal{O}_{\mathbb P (\sV) } (1)) \geq 0 \qquad \text{over } X\setminus Z .$$
	Let $r: P \to \mathbb P ' (\sE)$ and $\nu: \mathbb P ' (\sE) \to \mathbb P (\sE)$ be the resolutions in Definition \ref{definitiontautological}. As the morphism $\sV \rightarrow \Omega_X $ is algebraic, we know that $(r\circ \nu)^* h_\sV$ can be extended as a possibly singular metric $h_1$ on $\mathcal{O}_P (1)$ such that  
	$$i\Theta_{h_1} (\mathcal{O}_P (1)) + C_1 [D]\geq 0\qquad\text{ on } P $$
	for some constant $C_1$.  Then $\sV$ is pseudo-effective by Remark \ref{anav}.
	\end{proof}

While the subsheaf $\sV \subset \Omega_X$ captures the pseudoeffectivity of $\Omega_X$, we should not expect
the quotient $\Omega_X/\sV$ to be ``negative'':

\begin{example} \label{example-nef}
Let $\holom{f}{X}{Y}$ be a smooth non-isotrivial fibration
over a curve $Y$ such that all the fibres are abelian varieties.
Then the relative cotangent sheaf $\Omega_{X/Y}$ is nef. 
If $g(Y) \geq 1$ (which we can assume after base change), the cotangent sheaf $\Omega_X$ is pseudoeffective.
Since $f$ is not isotrivial,
we have $\sV \subsetneq \Omega_X$.
Thus  the quotient sheaf
$\Omega_X/\sV$ has rank at least one and is a quotient of $\Omega_{X/Y}$. Since $\Omega_{X/Y}$ is nef, its quotient $\Omega_X/\sV$ is also nef.
\end{example}

\section{Two nonvanishing results}

In this final section we provide some evidence towards Conjecture \ref{conjecture-dichotomy}, in particular we prove that the expected
dichotomy \ref{conjecture-dichotomy} actually implies the nonvanishing conjecture for fibrations over curves.

\subsection{Fundamental group}

\begin{lemma} \label{lemmafundamentalgroup}
Let $X$ be a projective manifold that admits a fibration $\holom{f}{X}{C}$
onto a curve $C$ such that the general fibre $F$ has $c_1(F)=0$. Then $\tilde q(X)>0$ or $X$ has finite fundamental group.
\end{lemma}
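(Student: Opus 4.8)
The plan is to read off the dichotomy directly from the fundamental group of the fibration $f$, without using any Hodge theory. The starting point is the homotopy exact sequence of a fibration over a curve, bookkeeping the multiple fibres of $f$: writing $F$ for a general fibre and $\pi_1^{\mathrm{orb}}(C)$ for the fundamental group of the orbifold on $C$ whose cone multiplicities are the multiplicities of the multiple fibres, one has the standard exact sequence
$$
\pi_1(F) \xrightarrow{\ j\ } \pi_1(X) \xrightarrow{\ p\ } \pi_1^{\mathrm{orb}}(C) \longrightarrow 1 .
$$
Set $N := \im(j)$ and $Q := \pi_1^{\mathrm{orb}}(C)$, so that $\pi_1(X)/N \simeq Q$. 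I would use throughout the correspondence between finite index subgroups $H \leq \pi_1(X)$ and connected finite \'etale covers $X_H \to X$, under which $q(X_H) = \tfrac12 \rk H^{\mathrm{ab}}$; thus $\tilde q(X)>0$ is equivalent to the existence of a finite index subgroup of $\pi_1(X)$ with infinite abelianisation, and the statement becomes the purely group-theoretic claim that an infinite $\pi_1(X)$ has such a subgroup.

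So assume $\pi_1(X)$ is infinite and split according to the two ways this can occur. If $Q$ is infinite, I would use that $Q$ is the fundamental group of a compact two-orbifold, hence (Selberg's lemma) contains a finite index subgroup $Q'$ that is the fundamental group of a closed surface; as $Q$ is infinite this surface has genus $\geq 1$, so $Q'^{\mathrm{ab}}$ is infinite. Then $H := p^{-1}(Q')$ has finite index in $\pi_1(X)$ and surjects onto $Q'$, so $H^{\mathrm{ab}} \twoheadrightarrow Q'^{\mathrm{ab}}$ is infinite and $q(X_H)>0$, giving $\tilde q(X)>0$. If instead $Q$ is finite, then $N$ has finite index in $\pi_1(X)$. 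Here I would invoke the Beauville--Bogomolov decomposition of the general fibre: since $c_1(F)=0$, a finite \'etale cover of $F$ is a product of a torus with simply connected factors, so $\pi_1(F)$, and hence its quotient $N$, is virtually abelian. Thus $\pi_1(X)$ is virtually abelian and infinite, so it contains a finite index subgroup isomorphic to $\Z^k$ with $k \geq 1$; the corresponding cover has positive irregularity, and again $\tilde q(X)>0$. The remaining possibility is $\pi_1(X)$ finite, which is the second alternative of the statement, so the dichotomy is complete.

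Two inputs need care. The first is the orbifold homotopy exact sequence itself: the surjectivity of $p$ and the identification $\ker p = N$ are classical, but the orbifold multiplicities coming from the multiple fibres must be recorded correctly. The second, which I expect to be the main technical point, is the structure theory used in the case $Q$ infinite: reducing to a surface subgroup of genus $\geq 1$ via Selberg's lemma together with the Euler-characteristic trichotomy for compact two-orbifold groups (finite, virtually $\Z^2$, or virtually a surface group of genus $\geq 2$). By contrast the case $Q$ finite is immediate once Beauville--Bogomolov is in hand. Finally, note that the reduction to $C \simeq \PP^1$ used elsewhere in the paper is not needed here: if $g(C)\geq 1$ then $Q$ is already infinite and the first case applies directly.
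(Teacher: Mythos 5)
Your argument is correct, but it takes a genuinely different route from the paper's. The paper splits according to the sign of $\deg(K_C+\Delta)$ for the orbifold base: when the degree is positive it follows \cite[Prop.5.2]{HP20} and constructs an actual ramified cover $C' \rightarrow C$ branched to the exact orders $m_i$, so that the normalisation of $X \times_C C'$ is \'etale over $X$ with irregularity at least $g(C')>0$; when the degree is non-positive it quotes that $\pi_1(C,\Delta)$ is almost abelian \cite[Ex.4.1]{CC14}, gets almost abelianity of $\pi_1(F)$ from Beauville--Bogomolov, and then invokes the K\"ahler-theoretic theorem \cite[Thm.4.1]{CC14} to conclude that $\pi_1(X)$ itself is almost abelian. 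You instead work entirely on the orbifold homotopy exact sequence and split on whether $Q=\pi_1^{\mathrm{orb}}(C)$ is infinite or finite, and the pleasant consequence is that the extension problem --- which is the genuine content of \cite[Thm.4.1]{CC14}, since an extension of a virtually abelian group by a virtually abelian group need not be virtually abelian (Heisenberg-type extensions of $\Z^2$ by $\Z$) --- never arises: when $Q$ is infinite you need no structure theory for $\pi_1(X)$ at all, only the surjection of $H=p^{-1}(Q')$ onto a genus $\geq 1$ surface group, which already forces $b_1(X_H)>0$; when $Q$ is finite, $\im(j)$ has finite index and is a quotient of the virtually abelian group $\pi_1(F)$, so $\pi_1(X)$ is virtually abelian with no extension argument needed. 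Thus you replace a nontrivial theorem about K\"ahler groups by Selberg's lemma and the Euler-characteristic trichotomy for compact $2$-orbifold groups (correctly stated, and the bad orbifolds --- teardrops and spindles --- have finite $\pi_1^{\mathrm{orb}}$, so they cause no trouble in your first case). Your two flagged inputs are indeed the right ones and both are classical: the exact sequence with the gcd-multiplicities of the multiple fibres, including normality of $\im(j)$ and exactness at $\pi_1(X)$, goes back to Nori, Xiao and Campana, and the identification $q(X_H)=\frac{1}{2}\rk H^{\mathrm{ab}}$ uses only that the cover is compact K\"ahler. What the paper's route buys is brevity given the literature and an explicit geometric cover in the hyperbolic case (the geometric incarnation of your $p^{-1}(Q')$); what yours buys is logical economy --- the only geometric inputs are the orbifold exact sequence and Beauville--Bogomolov --- plus, as you correctly note, the initial reduction to $C\simeq \PP^1$ becomes unnecessary.
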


\begin{proof}
We can assume without loss of generality that $C \simeq \PP^1$, since otherwise $q(X)>0$.
Denote by $(C, \Delta= \sum (1-\frac{1}{m_i}) p_i$ the orbifold structure on the base $C$ \cite[Defn.3.5]{CC14}.

If $K_C+\Delta$ has positive degree we can copy the proof of \cite[Prop.5.2]{HP20} to show that
there exists a ramified covering $C' \rightarrow C$ that branches exactly with
order $m_i$ in $p_i$ and is \'etale elsewhere. Thus the normalisation $X'$
of the fibre product $X \times_C C'$ has an \'etale map to $X$.
By the Hurwitz formula we have $g(C')>0$, so we obtain $q(X')>0$. 

If $K_C+\Delta$ has non-positive degree the fundamental group $\pi_1(C, \Delta)$
is almost abelian \cite[Ex.4.1]{CC14}. Since $c_1(F)=0$, the Beauville-Bogomolov decomposition theorem implies that the fundamental group $\pi_1(F)$ is also almost abelian. Thus by \cite[Thm.4.1]{CC14} the fundamental group $\pi_1(X)$ is almost abelian. If $\pi_1(X)$ is not finite there exists thus an \'etale cover $X' \rightarrow X$ such that $q(X') \neq 0$.
\end{proof}

A vector bundle $V$ on a projective manifold is said to be 
projectively Hermitian flat if it admits a smooth Hermitian metric $h$ such that its Chern curvature tensor $R = \nabla^2$ can be written as $R = \alpha \cdot \id_V$ for some $2$-form $\alpha$. In particular, the associated projectivised bundle $\PP(V)$ is given by a representation $\pi_1(X) \rightarrow PU(\rk V)$.
In our situation it is convenient to start with a more flexible notion:

\begin{definition} \label{definition-projectively-flat} \cite[Defn.4.1]{LOY20}
Let $X$ be a projective manifold. Let $V$ be a vector bundle on $X$,
denote by $\holom{\pi}{\PP(V)}{X}$ the projectivisation and by $\zeta_V$ the tautological class on $\PP(V)$. We say that $V$ is numerically projectively flat if the divisor class $\zeta_V - \frac{1}{\rk V} \pi^* c_1(V)$ is nef.
\end{definition}

The relation between the two notions is clarified by the following:

\begin{theorem} \cite[IV,Thm.4.1]{Nak04} \label{theorem-filtrate}
Let $X$ be a projective manifold, and let $V$ be a numerically projectively flat vector bundle on $X$. Then there exists a filtration of subbundles
$$
0 = V_0 \subsetneq V_1 \subsetneq \ldots \subsetneq V_p=V
$$
such that the graded pieces $V_i/V_{i-1}$ are projectively Hermitian flat and
$\frac{1}{\rk V_i/V_{i-1}} c_1(V_i/V_{i-1})$ is numerically equivalent to
$\frac{1}{\rk V} c_1(V)$.
\end{theorem}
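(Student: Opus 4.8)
The plan is to recognise numerical projective flatness as numerical flatness of a suitable $\Q$-twist, and then to reduce to the Demailly--Peternell--Schneider structure theorem for numerically flat bundles. Write $r=\rk V$, $n=\dim X$, and let $E:=V\langle -\tfrac{1}{r}c_1(V)\rangle$ be the $\Q$-twisted bundle, whose tautological class is $\zeta_V-\tfrac{1}{r}\pi^*c_1(V)$. By Definition \ref{definition-projectively-flat} the bundle $V$ is numerically projectively flat exactly when $E$ is nef; since moreover $c_1(E)=0$, the bundle $E$ is \emph{numerically flat} (a nef bundle with trivial first Chern class has $E^*$ nef as well, \cite{DPS94}). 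By \cite[Thm.1.18]{DPS94} a numerically flat bundle admits a filtration by subbundles whose graded pieces are Hermitian flat. Untwisting, a filtration of $E$ with Hermitian flat quotients $E_i/E_{i-1}$ produces a filtration $V_i=E_i\langle \tfrac{1}{r}c_1(V)\rangle$ of $V$ whose graded pieces are Hermitian flat twisted by the $\Q$-line bundle $\tfrac{1}{r}c_1(V)$, that is, projectively Hermitian flat, and with normalised first Chern class $\tfrac{1}{r}c_1(V)$ by construction. This would give the theorem.

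The one genuine subtlety is that $E$ is only a $\Q$-twisted bundle, so \cite[Thm.1.18]{DPS94} does not apply verbatim and the notion ``Hermitian flat twisted by $\tfrac{1}{r}c_1(V)$'' must be identified with projective Hermitian flatness. I would handle this in one of two ways. If $\tfrac{1}{r}c_1(V)$ is represented by an integral class then $E$ is an honest bundle and the argument above is already complete. In general one clears denominators: after a finite cover $\sigma\colon\widetilde X\to X$ on which $\tfrac{1}{r}\sigma^*c_1(V)$ becomes integral, $\sigma^*E$ is an honest numerically flat bundle, the DPS filtration is produced upstairs, and one descends it to $X$ using the uniqueness (hence Galois equivariance) of the canonical socle/Harder--Narasimhan filtration. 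Alternatively one reruns the DPS argument directly for numerically flat $\Q$-twisted bundles, which is the route I would expect Nakayama to take.

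At the heart of \cite[Thm.1.18]{DPS94}, and what I would reproduce in the twisted setting, is the following chain. First, $E$ is $H$-semistable for every ample $H$: for a quotient $E\twoheadrightarrow Q$ of rank $s$ one restricts the nef class $\zeta_E$ to $\PP(Q)\subset\PP(E)$, raises to the $s$-th power and intersects with $(\pi^*H)^{n-1}$; all terms quadratic in the base class drop out for dimension reasons, and the Segre identities $\pi_*\zeta_Q^{\,s}=c_1(Q)$, $\pi_*\zeta_Q^{\,s-1}=1$ yield $c_1(Q)\cdot H^{n-1}\geq 0$, i.e. $\mu_H(Q)\geq 0=\mu_H(E)$. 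Second, numerical flatness forces $\Delta(E)\cdot H^{n-2}=2r\,c_2(E)\cdot H^{n-2}=0$, again by \cite{DPS94}. Third, one passes to a Jordan--Hölder filtration; by additivity of the discriminant on graded pieces of equal slope each $\Delta(E_i/E_{i-1})\cdot H^{n-2}$ vanishes separately, so each stable piece is semistable of slope zero with vanishing discriminant. Here one invokes the Kobayashi--Hitchin correspondence (Uhlenbeck--Yau): the piece carries a Hermitian--Einstein metric, and equality in the Bogomolov inequality forces the Hermitian--Einstein curvature to be $\alpha\cdot\id$ with $\alpha$ closed and cohomologically trivial, i.e. the piece is Hermitian flat.

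I expect this last, transcendental step to be the main obstacle: deducing Hermitian flatness of a stable piece from stability together with equality in Bogomolov is the only input that is not formal intersection theory, and one must check that extremality in the inequality is attained exactly by projectively flat Hermitian--Einstein connections. The secondary difficulty is the clean bookkeeping of the $\Q$-twist, namely ensuring that the filtration produced for $E$ descends to honest subbundles of $V$ and that the graded pieces acquire precisely the required projective Hermitian flatness with the stated normalisation $\tfrac{1}{\rk V_i/V_{i-1}}c_1(V_i/V_{i-1})\equiv\tfrac{1}{r}c_1(V)$.
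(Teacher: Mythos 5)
You should first be aware that the paper contains no proof of this statement: it is imported verbatim from Nakayama \cite[IV, Thm.4.1]{Nak04}, so there is no internal argument to compare against. Judged on its own terms, your reconstruction is correct in outline and is essentially the argument behind the cited result: reduce numerical projective flatness to numerical flatness of the normalised twist (nef plus $c_1=0$ gives nefness of the dual via $\Lambda^{r-1}E \simeq E^*\otimes\det E$, so the twist is genuinely numerically flat), then invoke the Demailly--Peternell--Schneider filtration \cite[Thm.1.18]{DPS94}, whose transcendental core --- semistability of slope zero, $\Delta(E)\cdot H^{n-2}=0$, additivity with equality across a Jordan--H\"older filtration forcing each stable piece to have vanishing discriminant and proportional normalised $c_1$, and then Uhlenbeck--Yau/Simpson to conclude projective Hermitian flatness --- is exactly the input Nakayama also uses. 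You have correctly identified both the formal intersection-theoretic layer and the one genuinely analytic step.

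Two points in your $\Q$-twist bookkeeping deserve sharper wording, though neither is a fatal gap. First, the cover trivialising the denominator is a Bloch--Gieseker cover, which is finite and (after arranging the construction suitably) Galois, but \emph{branched}, not \'etale; so descent cannot be done naively. Your instinct to use a canonical filtration is the right fix --- the socle filtration of the semistable bundle is unique, hence Galois-invariant --- but you must then check that the descended invariant subsheaves are sub\emph{bundles} of $V$: this follows because local freeness of the quotient can be tested after the finite flat base change, where the pullback of the saturated descended subsheaf coincides with the invariant subbundle upstairs. Second, ``Hermitian flat twisted by $\tfrac1r c_1(V)$'' should be identified with projective Hermitian flatness downstairs not by descending a metric across the branch locus (which is awkward) but via the algebraic characterisation you already have in hand: polystability together with $\Delta=0$ descends along finite surjective maps in characteristic zero, and the Kobayashi--Hitchin correspondence then produces the projectively flat metric directly on $X$. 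With those two repairs your proposal is a complete and faithful reconstruction of the quoted theorem.
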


\begin{proposition} \label{proposition-projectively-flat}
Let $X$ be a projective manifold with $\tilde q(X)=0$ that admits a fibration $\holom{f}{X}{C}$
onto a curve $C$ such that the general fibre $F$ has $c_1(F)=0$.
Let $V \rightarrow X$ be a projectively flat vector bundle of rank $r$ such that $V \otimes \sO_F \simeq \sO_F^{\oplus \rk V}$. If $V$ is pseudoeffective we have $\kappa(X, V) \geq 0$. 

More precisely there exists a finite \'etale cover $\holom{\nu}{X'}{X}$ such that
$\nu^* V \simeq L^{\oplus r}$ where $L$ is a line bundle with $\kappa(X,L) \geq 0$.
\end{proposition}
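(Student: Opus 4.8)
The plan is to pass to a finite \'etale cover on which $V$ becomes a direct sum of copies of a single line bundle, and then to prove nonvanishing for that line bundle. First I would observe that $\pi_1(X)$ is finite: the general fibre satisfies $c_1(F)=0$ and $\tilde q(X)=0$ by hypothesis, so Lemma \ref{lemmafundamentalgroup} leaves no option but finiteness of $\pi_1(X)$. Since $V$ is projectively flat, its projectivisation $\PP(V)$ is given by a representation $\pi_1(X)\to \mathrm{PU}(r)$ (if one only assumes $V$ numerically projectively flat, one first invokes Theorem \ref{theorem-filtrate} and uses the finiteness of $\pi_1(X)$ to make the graded pieces genuinely projectively Hermitian flat). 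Let $\holom{\nu}{X'}{X}$ be the universal cover, which is finite \'etale because $\pi_1(X)$ is finite. On $X'$ the pulled-back representation is trivial, so $\PP(\nu^*V)\simeq X'\times\PP^{r-1}$ and hence $\nu^*V\simeq L^{\oplus r}$ for some line bundle $L$ on $X'$.

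Next I would record the geometry of $X'$. As $\tilde q(X)=0$ we have $q(X')=0$, and since $f\circ\nu$ factors through a fibration $\holom{f'}{X'}{C'}$ with $q(X')\ge g(C')$ we get $C'\simeq\PP^1$. Pulling back $V\otimes\sO_F\simeq\sO_F^{\oplus r}$ to a general fibre $F'$ of $f'$ gives $(L\otimes\sO_{F'})^{\oplus r}\simeq\sO_{F'}^{\oplus r}$, whence $L|_{F'}\simeq\sO_{F'}$; in particular $L|_{F'}\equiv 0$. Finally, pseudoeffectivity of $V$ passes to $\nu^*V$ (pull back sections and note that $\nu^*H$ is ample), and since $S^m L^{\oplus r}\simeq (L^{\otimes m})^{\oplus N}$ it passes to $L$. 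Thus $L$ is a pseudoeffective line bundle on $X'$ whose restriction to the general fibre of $f'$ is numerically trivial.

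The heart of the argument is to show $\kappa(X',L)\ge 0$. I would use the divisorial Zariski decomposition $L=P+N$. Restricting to a general fibre $F'$, the class $P|_{F'}$ is pseudoeffective, $N|_{F'}$ is effective, and $L|_{F'}\equiv 0$, so $N|_{F'}\equiv -P|_{F'}$ is at once effective and anti-pseudoeffective, forcing $N|_{F'}\equiv 0$ and $P|_{F'}\equiv 0$. Therefore $N$ is vertical (a horizontal component would restrict to a non-zero effective divisor on $F'$), and $P$ is a modified nef class with $P|_{F'}\equiv 0$ and $P^2\cdot A^{n-2}\ge 0$ for an ample class $A$ (as $P$ restricts to a nef class on a general complete-intersection surface). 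Lemma \ref{lemmanumericalclassfibres} then yields $P\equiv\lambda F'$ with $\lambda\ge 0$. Since $H^0(X',\sO_{X'}(\lfloor mL\rfloor))=H^0(X',\sO_{X'}(\lfloor mP\rfloor))$ for all $m$, we have $\kappa(X',L)=\kappa(X',P)$, and I would conclude $\kappa(X',P)\ge 0$ from $P\equiv\lambda F'$: when $\lambda=0$ the class $L\equiv N$ is numerically equivalent to an effective divisor, so $q(X')=0$ makes $mL\sim mN$ effective for a suitable $m$; when $\lambda>0$ one exploits that the fibre class $[F']=c_1(f'^*\sO_{\PP^1}(1))$ is semiample, so that a class numerically proportional to it is effective up to torsion, again using $q(X')=0$.

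The main obstacle is precisely this last effectivity step: pseudoeffectivity by itself never forces $\kappa\ge 0$, so everything hinges on having transported $L$ to the cover $X'$, where its positive part becomes numerically proportional to the semiample fibre class $F'$ and where $q(X')=0$ kills the numerically trivial error. (An alternative, more hands-on treatment computes $f'_*L^{\otimes m}\simeq\sO_{\PP^1}(d_m)$, which is a line bundle because $L|_{F'}\simeq\sO_{F'}$, and shows $d_m\ge 0$ for some $m$.) Granting $\kappa(X',L)\ge 0$, we obtain $\kappa(X',\nu^*V)=\kappa(X',L^{\oplus r})\ge 0$ since $L^{\otimes m}$ is a direct summand of $S^m L^{\oplus r}$; because the Kodaira dimension of the tautological class (Definition \ref{def:Kodaira-1}) is invariant under finite \'etale covers, this gives $\kappa(X,V)\ge 0$, which is the first assertion, while $\nu$ together with the line bundle $L$ on $X'$, satisfying $\kappa(X',L)\ge 0$, furnishes the refined statement.
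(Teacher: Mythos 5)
Your proposal is correct and follows essentially the same route as the paper: finiteness of $\pi_1(X)$ via Lemma \ref{lemmafundamentalgroup}, passage to the finite \'etale universal cover where $V$ splits as $L^{\oplus r}$, descent of pseudoeffectivity to $L$, and then the divisorial Zariski decomposition $L=P+N$ combined with $P^2\cdot A^{n-2}\geq 0$, Lemma \ref{lemmanumericalclassfibres} and simple connectedness to get $P \sim_\R \lambda F$ with $\lambda \geq 0$, whence $\kappa(X',L)=\kappa(X',P)\geq 0$. The only divergence is how the splitting $\nu^*V\simeq L^{\oplus r}$ is obtained: the paper reads the hypothesis as \emph{numerically} projectively flat and therefore filters $V$ by Theorem \ref{theorem-filtrate}, identifying the graded line bundles (numerically equivalent line bundles on a simply connected manifold are isomorphic) and splitting the extensions via $H^1(X',\sO_{X'})=0$ --- this is exactly what your parenthetical should spell out, since Theorem \ref{theorem-filtrate} already yields projectively Hermitian flat graded pieces without any finiteness of $\pi_1(X)$, and the finiteness is used only afterwards for the splitting, while your main-line $PU(r)$-representation argument covers only the genuinely projectively flat case.
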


\begin{proof}
By Lemma \ref{lemmafundamentalgroup} the fundamental group $\pi_1(X)$ is finite. Since numerical projective flatness \cite[Lemma 4.3.(3)]{LOY20} and the Kodaira dimension  \cite[Lemma 3.15]{HP20} are invariant under 
finite covers, we can assume without loss of generality that 
$X$ is simply connected. 

Let 
$$
0 = V_0 \subsetneq V_1 \subsetneq \ldots \subsetneq V_p=V
$$
be the filtration from Theorem \ref{theorem-filtrate}.
Since $\pi_1(X)$ is trivial, the projectively Hermitian flat vector bundles
$V_i/V_{i-1}$ are a direct sum of isomorphic line bundles $L_i$. 
By Theorem \ref{theorem-filtrate} 
$\frac{1}{\rk V_i/V_{i-1}} c_1(V_i/V_{i-1}) \simeq L_i$ is numerically equivalent to
$\frac{1}{\rk V} c_1(V)$.
Since $\pi_1(X)$ is trivial, the numerically equivalent line bundles are actually isomorphic, so the line bundle $L_i$ does not depend $i$.
Since $H^1(X, \sO_X)=0$ we deduce that the extensions defined by the filtration split, so we obtain that
$$
V \simeq L^{\oplus r}
$$
where $L$ is a line bundle on $X$. Since $V$ is pseudoeffective and a direct sum it is not difficult to deduce via Definition \ref{def:reflexive}
that the line bundle $L$ is pseudoeffective. Moreover, since
$V \otimes \sO_F \simeq \sO_F^{\oplus \rk V}$, we also have $L \otimes \sO_F \simeq \sO_F$.

We will now show that $\kappa(X, L) \geq 0$: let $L=P+N$ be the divisorial Zariski division of $L$. Since $c_1(L)|_F \equiv 0$ the negative part $N$ is contained in fibres of $f$, so $P|_F \equiv 0$. 
Let $A$ be any ample class on $X$, then we have 
$P^2 \cdot A^{n-2} \geq 0$ (a consequence of \cite[Prop.2.4]{Bou04}).
Thus we can apply Lemma \ref{lemmanumericalclassfibres}
to see that $P \equiv \lambda F$ for some $\lambda \in \R$. 
Since $X$ is simply connected, we actually have a linear equivalence
$P \sim_\R \lambda F$.
The class $P$ being pseudoeffective we have $\lambda \geq 0$ and hence
$\kappa(X, P) \geq 0$. Since $\kappa(X, L) = \kappa(X, P)$ this concludes the proof.
\end{proof}

\subsection{An extension argument}

\begin{proposition} \label{proposition-base-positivity}
Let $\holom{f}{X}{Y}$ be a fibration between projective manifolds. Let $\sV$ be a pseudoeffective reflexive sheaf on $X$ such that 
$\sV \otimes \sO_{F} \simeq \sO_{F}^{\oplus \rk V}$   for a general fibre $F$ and we suppose that $\kappa (F) \geq 0$. 
Let $a\in \mathbb N^*$ such that $H^0 (F, a K_F) \neq 0$.

Then there exists an ample line bundle $A_Y$ on $Y$ such that the image of the restriction
\begin{equation}\label{surj}
	H^0 (X, a K_{X/Y} \otimes S^{[m]} \sV  \otimes f^* A_Y) \rightarrow H^0 (F, a K_F \otimes S^{[m]} \sV  \otimes A_Y) \simeq H^0 (F, a K_F \otimes \sO_{F}^{\oplus {m + \rk V-1 \choose \rk V}})
	\end{equation}
is not zero for a generic fiber $F$ and every $m\in \mathbb N^*$.

In particular, if $c_1 (K_{F})=0$, for every $\varepsilon>0$ the $\Q$-twist
$V(\varepsilon \cdot c_1(f^* A_Y))$ has Kodaira dimension at least $\rk \sV-1 +\dim Y$.
\end{proposition}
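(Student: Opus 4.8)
The plan is to deduce both assertions from the positivity of the direct image sheaf $\sG_m := f_*(aK_{X/Y}\otimes S^{[m]}\sV)$, combined with a global generation argument that is \emph{uniform in $m$}; the uniformity is exactly what lets a single ample $A_Y$ work for all $m$.

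First I would record the fibrewise picture. Since $\sV$ is pseudoeffective, so is $S^{[m]}\sV$, and hence it carries a singular Hermitian metric $h_m$ with semipositive curvature (equivalently, its tautological class is pseudoeffective). Over a general fibre $F$ the sheaf $\sV\otimes\sO_F$ is trivial, so $S^{[m]}\sV\otimes\sO_F \simeq \sO_F^{\oplus N_m}$ with $N_m = \binom{m+\rk \sV-1}{\rk \sV-1}$, a polynomial of degree $\rk \sV - 1$ in $m$. Moreover, by the structure of the metric described in Theorem \ref{genDPS}, the weight of $h_m$ restricted to $F$ is generically a pull-back, so its multiplier ideal is trivial at the generic point of $F$. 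Consequently every section of $aK_F\otimes\sO_F^{\oplus N_m}$ is $L^2$, and since $H^0(F,aK_F)\neq 0$ the fibre $(\sG_m)_{y}\simeq H^0(F, aK_F\otimes S^{[m]}\sV\otimes\sO_F)$ has dimension $h^0(F,aK_F)\cdot N_m \geq N_m$ for general $y\in Y$.

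Next I would invoke the positivity of twisted relative pluricanonical direct images \cite{PT18}, applied with pluricanonical order $a$ and the positively curved factor $S^{[m]}\sV$: this gives an $L^2$-metric $H_m$ on $\sG_m$ with semipositive curvature in the sense of singular Hermitian metrics. The decisive point is that $H_m$ contributes only \emph{nonnegative} curvature, so the geometric data one must dominate to extend sections is independent of $m$. I would therefore fix once and for all an ample $A_Y$ so positive that $A_Y\otimes K_Y^{-1}$ dominates this fixed geometry, and run an Ohsawa--Takegoshi-type extension at a general point $y_0\in Y$ (where $h_m$ is finite and the fibrewise multiplier ideal is trivial) to conclude that $\sG_m\otimes A_Y$ is globally generated at $y_0$ for every $m$ simultaneously. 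Surjectivity of the evaluation $H^0(Y,\sG_m\otimes A_Y)\to(\sG_m\otimes A_Y)_{y_0}$ then yields the nonvanishing of the image of \eqref{surj}, and more quantitatively $h^0(Y,\sG_m\otimes A_Y)\geq \rk \sG_m\geq N_m$.

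Finally, for the case $c_1(K_F)=0$ I would promote this to the Kodaira dimension bound. Choosing $a$ with $aK_F\sim\sO_F$, the relatively trivial line bundle $aK_{X/Y}$ is a pull-back $f^*D_0$; multiplying the $\geq N_m$ sections above by $H^0(Y,A_Y^{\otimes k})$ and checking their generic linear independence yields $h^0\big(X, S^{[m]}\sV\otimes f^*(D_0+(k+1)A_Y)\big)\gtrsim m^{\rk \sV-1}\,k^{\dim Y}$. Setting $k\approx m\varepsilon$, absorbing the fixed twist $f^*D_0$ by enlarging the ample part, and clearing denominators identifies these with sections of $S^{[m]}\big(\sV(\varepsilon\, c_1(f^*A_Y))\big)$, so the growth rate $m^{\rk \sV-1+\dim Y}$ gives $\kappa\big(X,\sV(\varepsilon f^*A_Y)\big)\geq \rk \sV-1+\dim Y$. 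I expect the main obstacle to be the middle step: establishing the uniform-in-$m$ global generation of $\sG_m\otimes A_Y$ from the merely semipositive $L^2$-metric (a Nadel/Ohsawa--Takegoshi extension for a singular Hermitian direct image), together with the verification, via Theorem \ref{genDPS}, that the fibrewise multiplier ideal of $h_m$ is trivial so that the full space $H^0(F,aK_F\otimes\sO_F^{\oplus N_m})$ is seen by the direct image.
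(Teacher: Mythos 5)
There is a genuine gap, and it sits exactly where you flagged your ``main obstacle'': the claim that the fibrewise multiplier ideal of the pseudoeffective metric is trivial, so that \emph{every} section of $aK_F\otimes\sO_F^{\oplus N_m}$ is $L^2$ and the full fibre of $\sG_m$ is seen by the direct image. Theorem \ref{genDPS} does not give this. It says that the restriction of the metric to $\PP(\sV\otimes\sO_F)\simeq F\times\PP^{r-1}$ is a pull-back of a closed positive current $T$ from the $\PP^{r-1}$-factor, but $T$ may be arbitrarily singular in the fibre directions of the projectivisation --- the model case, in the spirit of Example \ref{example-DPS}, is $T=[H]$ the current of integration along a hyperplane $H\subset\PP^{r-1}$, which occurs whenever the positive part of the class $\zeta$ is purely divisorial. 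In that case $\mathcal{I}(mh_2)|_{\PP^{r-1}}=\sO_{\PP^{r-1}}(-mH)$, so $H^0(\PP^{r-1},\sO(m)\otimes\mathcal{I}(mh_2))$ is one-dimensional: only the $m$-th power of the defining section survives. ``Trivial at the generic point of $F$'' does not imply global $L^2$-ness of sections, and your bounds $h^0(Y,\sG_m\otimes A_Y)\geq N_m$ and the uniform surjectivity onto the full space are false in this example. This is also why the proposition only asserts that the image of the restriction \eqref{surj} is \emph{nonzero}, not surjective. The paper's proof is built around precisely this phenomenon: it works upstairs on $P$, equips $\sO_P(\zeta)^{\otimes(m+r)}$ with the mixed metric $mh_2+rh_1$ (where $h_1$ is an auxiliary metric, smooth on general $g$-fibres with curvature $\geq -g^*\omega_Y$, constructed from global generation of $f_*\sV\otimes A_Y$), applies the line-bundle Ohsawa--Takegoshi extension of \cite{PT18} to extend only the sections lying in the multiplier-ideal-twisted space $H^0(F,aK_F)\otimes H^0(\PP^{r-1},\sO(m)\otimes\mathcal{I}(mh_2))$, and then invokes \cite[Lemma 4.4]{CCP19} to guarantee that this space is nonzero. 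Your alternative device --- a Nadel-type global generation for a singular semipositively curved metric on the direct image $\sG_m$ itself --- would at best generate a multiplier-submodule-twisted subsheaf, so it cannot close the gap either.

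The same defect propagates into your final step. With a twist $f^*A_Y$ of size independent of $m$, the fibre-direction contribution need not grow like $m^{r-1}$ (in the divisorial example above it is constant), so your count $h^0\gtrsim m^{r-1}k^{\dim Y}$ does not follow. The paper recovers the growth $(\varepsilon m)^{r-1}$ differently: it lets the twist grow, taking $m'=\varepsilon m$ copies of $g^*A_Y$ weighted by the fibrewise-smooth metric $h_1$, so that the extendable space on a fibre becomes $H^0(F,aK_F)\otimes H^0(\PP^{r-1},\sO(m+m')\otimes\mathcal{I}(mh_2))$; multiplying the one CCP19-surviving section of $\sO(m)\otimes\mathcal{I}(mh_2)$ by all of $H^0(\PP^{r-1},\sO(m'))$ yields $\sim(\varepsilon m)^{r-1}$ sections, and the $\dim Y$ directions come from further multiplication by pull-backs from $Y$. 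This growing-twist mechanism is exactly what the $\Q$-twist $\sV(\varepsilon\, c_1(f^*A_Y))$ in the statement encodes, and it is the piece your fixed-$A_Y$ scheme is missing. The outer architecture of your proposal (triviality of $\sV\otimes\sO_F$, positivity of direct images via \cite{PT18}, an extension theorem, a section count) matches the paper, but the two quantitative claims that drive your argument --- triviality of the fibrewise multiplier ideal and the resulting $N_m$-dimensional extension space --- are not available.
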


\begin{proof}
 Let $p: \mathbb P (\sV) \rightarrow X$ be the projection and let $\nu \circ r: P \rightarrow \mathbb P (\sV)$ be the resolution in Definition \ref{definitiontautological}. 
 Set $\pi = p \circ \nu \circ r :  P \rightarrow X$.

Let $\zeta$ be a tautological class on $P$, and set $g:=f \circ \pi$.

Note that there exists a possibly singular metric $h_1$ on $\mathcal O_{P} (\zeta)$ satisfying the following two conditions:

(i)$h_1$ is smooth on the generic fiber of $g$

(ii)$i\Theta_{h_1} (\mathcal O _{P} (\zeta))\geq -g^*\omega_Y$ for some  K\"ahler metric $\omega_Y$ on $Y$.  

Indeed we can find an ample line bundle $A_Y$ on $Y$ such that
$f_* \sV \otimes A_Y$ is globally generated. Since $\sV \otimes \sO_{F}$ is trivial, the natural morphism
$$
f^* (f_* \sV \otimes A_Y) \rightarrow \sV \otimes f^* A_Y
$$
is surjective on the general fibre. Thus $\sV \otimes f^* A_Y$ is globally generated near the general fibre $F$ and we can use the corresponding global sections of 
$\mathcal O _{P} (\zeta) \otimes \pi^*f^* A_Y$ to construct a singular metric satisfying the above two conditions.

\smallskip

By assumption $\sV$ is pseudoeffective, so there exists a possibly singular metric $h_2$
on $\mathcal O _{P} (\zeta)$ such that $\Theta_{h_2} (\mathcal O _{P} (\zeta)) \geq 0$ on $P$. 
Now we consider the line bundle 
$$
 a K_{P/Y} \otimes \mathcal O _{P}(\zeta)^{\otimes (m + r)} \otimes g^* A_Y
$$
We equip $O _{P}(\zeta)^{\otimes (m + r)}$ with the metric $mh_2 + r h_1$.
Let $G$ be a general fibre of $g$. Since $\sV$ is locally free near a general fibre, we have
$$
G \simeq \PP(\sV \otimes \sO_{F}), 
$$
in particular $\mathcal O _{P}(\zeta) \simeq \sO_{\PP(\sV \otimes \sO_{F})}(1)$.

By Ohsawa-Takegoshi extension theorem \cite[Thm.4.1.1]{PT18}, together with the fact that $h_1$ is smooth, we know that 
$$H^0 (P, a K_{P/Y} \otimes \mathcal O _{P}(\zeta)^{\otimes (m + r)} \otimes g^* A_Y ) \rightarrow H^0 (G, 
a K_{P/Y} \otimes \mathcal O _{P}(\zeta)^{\otimes (m + r)} \otimes \mathcal{I} (m h_2 +r h_1)\otimes g^* A_Y
)$$
$$=H^0 (G, 
a K_{P/Y} \otimes \mathcal O _{P}(\zeta)^{\otimes (m + r)} \otimes \mathcal{I} (m h_2 )\otimes g^* A_Y
)$$
is surjective for a general fibre $G$ and every $m\in \mathbb N^*$.
Note that $G =F\times \mathbb P^{r-1}$. we have
\begin{equation}\label{iden}
H^0 (G, 
a K_{P/Y} \otimes \mathcal O _{P}(\zeta)^{\otimes (m +r)}\otimes \mathcal{I} (m h_2)  \otimes g^* A_Y
)
\end{equation}
$$
\simeq
H^0 (F, a K_F ) \otimes H^0 (\mathbb P^{r-1},  \mathcal{O} (m) \otimes  \mathcal{I} (m h_2 )) .
$$
By using \cite[Lemma 4.4]{CCP19}, we know that $H^0 (\mathbb P^{r-1},  \mathcal{O} (m) \otimes  \mathcal{I} (m h_2 )) \neq 0$. 
We obtain thus \eqref{surj}.

\medskip

For the second part of the proposition, by the same argument, we know that the restriction
$$H^0 (P, a K_{P/Y} \otimes \mathcal O _{P}(\zeta)^{\otimes (m + m')} \otimes g^* m' A_Y ) \rightarrow H^0 (G, 
a K_{P/Y} \otimes \mathcal O _{P}(\zeta)^{\otimes (m + m')} \otimes \mathcal{I} (m h_2 +m' h_1)\otimes g^* m' A_Y
)$$
$$=H^0 (G, 
a K_{P/Y} \otimes \mathcal O _{P}(\zeta)^{\otimes (m + m')} \otimes \mathcal{I} (m h_2 )\otimes g^* m'A_Y
)$$
is surjective for every $m, m' \in\mathbb N$. In particular, if we take $m' =m\cdot \ep$, we know that 
$$H^0 (G, 
a K_{P/Y} \otimes \mathcal O _{P}(\zeta)^{\otimes (m + m')} \otimes \mathcal{I} (m h_2 )\otimes g^* m'A_Y
) \sim (\ep m)^{r-1} .$$
The second part is proved.
\end{proof}

\subsection{Towards a dichotomy}

\begin{proposition} \label{proposition-dichotomy}
Let $X$ be a projective manifold such that $q(X)=0$.
Assume that $X$ admits a fibration $\holom{f}{X}{\PP^1}$ with irreducible fibres.
Let $\sV$ be a pseudoeffective reflexive sheaf on $X$ such that 
$\sV \otimes \sO_F \simeq \sO_F^{\oplus \rk \sV}$   for a general fibre $F$.
Then one of the following holds
\begin{itemize}
\item We have $\kappa(X, \sV) \geq 0$
\item A tautological class of $\sV$ is nef in codimension one.
\end{itemize}
\end{proposition}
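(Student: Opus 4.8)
The plan is to translate everything to the tautological class and then run a divisorial Zariski decomposition. Let $\holom{\pi}{P}{X}$ be as in Definition \ref{definitiontautological} and let $\zeta$ be a tautological class of $\sV$ on $P$. Since $\sV$ is pseudoeffective, $\zeta$ is pseudoeffective by Lemma \ref{lemmapseff}, and by Definition \ref{def:Kodaira-1} the inequality $\kappa(X,\sV)\ge 0$ is equivalent to $\kappa(P,\zeta)\ge 0$. I would write the divisorial Zariski decomposition
$$
\zeta = \alpha + N ,
$$
where $\alpha$ is the positive part and $N\ge 0$ the negative part \cite{Bou04,Nak04}. By construction $\alpha$ is nef in codimension one, so if $N=0$ then the tautological class $\zeta$ itself is nef in codimension one and we are in the second alternative. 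It thus remains to prove that $N\ne 0$ forces $\kappa(P,\zeta)\ge 0$.

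First I would restrict to a general fibre. Set $g:=f\circ\pi:P\to\PP^1$ and let $G$ be a general fibre of $g$. Since $\sV\otimes\sO_F$ is trivial and $\pi$ is an isomorphism over a general fibre $F$ of $f$, we have $G\simeq F\times\PP^{r-1}$ with $r=\rk\sV$ and $\zeta|_G = \mathrm{pr}^*H$, where $\mathrm{pr}:G\to\PP^{r-1}$ is the projection and $H$ the hyperplane class. The key point is that $\zeta$ restricts to $0$ on each slice $F\times\{y\}$. Restricting the two pseudoeffective classes $\alpha$ and $N$ to a general such slice, which moves in a family covering $P$ so that the restrictions stay pseudoeffective, and using that the pseudoeffective cone is salient, I would conclude that both $\alpha|_{F\times\{y\}}$ and $N|_{F\times\{y\}}$ are numerically trivial. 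Hence the effective divisor $N|_G$ has trivial $N^1(F)$-component, i.e.
$$
N|_G \equiv (1-c)\,\zeta|_G, \qquad \alpha|_G \equiv c\,\zeta|_G
$$
for some $c\in[0,1]$. Because all fibres of $f$ are irreducible, the only vertical prime divisors of $g$ are the nef full fibres $G\equiv g^*\O_{\PP^1}(1)$, so every component of $N$ is horizontal and restricts nontrivially to $G$; therefore $c<1$.

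Next I would compare $N$ with $\zeta$ globally. The class $N-(1-c)\zeta$ restricts to $0$ on the general fibre, so the irreducibility of the fibres gives a numerical identity $N\equiv (1-c)\zeta + \lambda G$ for some $\lambda\in\R$. Substituting $\zeta=\alpha+N$ yields
$$
c\,N = (1-c)\,\alpha + \lambda\,G .
$$
Now $\alpha$ is nef in codimension one and $G$ is nef, hence both lie in the convex cone of classes nef in codimension one. If $c>0$ and $\lambda\ge 0$, the right-hand side would exhibit $N$ as nef in codimension one; but a nonzero negative part is never nef in codimension one (otherwise $\zeta=\alpha+N$ would be nef in codimension one, forcing the negative part of $\zeta$ to vanish). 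Hence $\lambda<0$; for $c=0$ the relation reads $\alpha=-\lambda G$, and pseudoeffectivity of $\alpha$ forces $\lambda\le 0$ as well. In all cases
$$
(1-c)\,\zeta \equiv N + |\lambda|\,G
$$
is an effective class with $1-c>0$. Since $q(X)=0$, numerical and linear equivalence of $\Q$-divisors agree up to torsion, so a multiple of $\zeta$ is linearly equivalent to an effective divisor and $\kappa(P,\zeta)=\kappa(X,\sV)\ge 0$.

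The step I expect to be the main obstacle is the passage from the fibrewise information to the global identity $N\equiv(1-c)\zeta+\lambda G$. This is exactly where the hypothesis that $f$ has \emph{irreducible fibres} is indispensable, and where the $\pi$-exceptional divisors lying over the codimension $\ge 2$ locus on which $\sV$ fails to be locally free have to be controlled; I expect they can be absorbed since they affect neither pseudoeffectivity nor the Kodaira dimension (Lemma \ref{lemmapseff}, Definition \ref{def:Kodaira-1}), but making this rigorous, together with the structural fact that a nonzero negative part cannot be nef in codimension one, is the delicate part of the argument.
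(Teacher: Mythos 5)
There is a genuine gap, and it sits exactly where you flagged it---but it is not a technicality that can be ``absorbed''; it breaks the architecture of your argument. First, your dichotomy is the wrong one. The second alternative of the proposition uses the weak notion of ``nef in codimension one'' from Remark \ref{remark-meaning}, which \emph{by design} ignores the $\pi$-exceptional divisors: by the construction of $\Lambda$ (cf.\ \cite[III,5.10(2)]{Nak04}), the restriction of $\zeta$ to every $\pi$-exceptional divisor is \emph{never} pseudoeffective. Consequently, whenever $\sV$ fails to be locally free, those exceptional divisors occur with positive coefficient in the negative part, so $N\neq 0$ \emph{always} in that situation, and your scheme would then be obliged to prove $\kappa(X,\sV)\geq 0$ in all such cases---a statement strictly stronger than the proposition, which allows the second alternative to hold with $N\neq 0$ supported on exceptional divisors. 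Your own argument detects the problem: for purely exceptional $N$ one has $N|_G=0$, so $c=1$, your claim ``$c<1$'' fails, and the putative identity $N\equiv \lambda G$ is false for an exceptional divisor.

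Second, the step you yourself identify as the main obstacle, namely $N\equiv(1-c)\zeta+\lambda G$, does not follow from irreducibility of the fibres and is false as stated. The fibres of $g=f\circ\pi$ on $P$ are \emph{not} irreducible: over a point $y$ one has $(f\circ\pi)^*y=Z_y+E$ with $Z_y$ the main component and $E$ exceptional, and both $Z_y$ and the components of $E$ restrict to $0$ on the general fibre $G$ without being numerically proportional to $G$. Moreover, even with genuinely irreducible fibres, ``restricts to $\equiv 0$ on the general fibre'' does not imply ``$\equiv\lambda G$'': on an elliptic surface with irreducible fibres and two sections $s_1,s_2$, the class $s_1-s_2$ has degree $0$ on fibres but $(s_1-s_2)^2<0$, so it is not a multiple of the fibre. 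This is why the paper's Lemma \ref{lemmanumericalclassfibres} carries the extra Hodge-index hypothesis $D^2\cdot A^{n-2}\geq 0$, which you have no way to verify for $N-(1-c)\zeta$, and why Lemma \ref{lemmaeffirreduciblefibres} works with an honest line bundle trivial on the fibre (via $f_*$ and effectivity) on $X$, where the fibres \emph{are} irreducible---not with numerical classes on $P$. The paper's actual proof avoids all of this by arguing divisor by divisor: a vertical bad divisor is excluded by a semicontinuity argument specializing sections of $\zeta$ from the general fibre (1st case), while a horizontal bad divisor $Z$ is converted, via $\mbox{Pic}$ of the projectivized bundle and Lemma \ref{lemmaeffirreduciblefibres}, into an actual section of $S^{[\lambda]}\sV\otimes\sO_X(f^*M_Y)$; the hard subcase $\deg M_Y>0$ then needs the Ohsawa--Takegoshi input of Proposition \ref{proposition-base-positivity} together with Boucksom's rigidity $\kappa(P,Z)=0$ \cite[Prop.3.13]{Bou04}. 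Nothing in your outline substitutes for these last two ingredients: in your scheme the entire difficulty is hidden in the sign of $\lambda$, which is exactly the point where the false numerical identity does the (illegitimate) work.
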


\begin{remark} \label{remark-meaning}
Let us start by clarifying the statement in the case where $\sV$ is not necessarily locally free: let $\holom{p}{\PP'(\sV)}{X}$ and
$$
\holom{r \circ \nu}{P}{\PP'(\sV)}
$$ 
be a birational morphism from a projective manifold $P$ as in Definition \ref{definitiontautological}
and let 
$$
\zeta := c_1(\sO_{P}(1) \otimes \sO_P(\Lambda))  \in N^1(P)
$$
be a tautological class of $\sV$. 
Set $\pi := p \circ r \circ \nu$. By the construction of $\Lambda$ 
\cite[III,5.10(2)]{Nak04}
the restriction of $\zeta$ to a divisor $Z$ that is $\pi$-exceptional is not pseudoeffective. As we have seen in the discussion after Definition \ref{definitiontautological} these exceptional divisors do not play any role for the pseudoeffectivity of the reflexive sheaf.
 We therefore say that $\zeta$ is nef in codimension one if for 
every prime divisor $Z \subset P$ that is not  $\pi$-exceptional, the restriction
$\zeta|_Z$ is pseudoeffective. Note that this is slightly weaker than $\zeta$ being modified nef in the sense of Boucksom \cite[Prop.2.4]{Bou04}.
\end{remark}

The proof of Proposition \ref{proposition-dichotomy} uses at a key point
the following basic fact:

\begin{lemma} \label{lemmaeffirreduciblefibres}
Let $X$ be a projective manifold of dimension $n$ that admits a fibration $\holom{f}{X}{C}$ onto a curve $C$ such that all the fibres are irreducible.

Let $D$ be a divisor class on $X$ such that 
$\sO_F(D) \simeq \sO_F$. Then we have
$D \sim_\Q \lambda F$ for some $\lambda \in \Q$.
\end{lemma}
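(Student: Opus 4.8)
The plan is to realise $D$, up to $\Q$-linear equivalence, as a pullback from $C$, by analysing the pushforward $f_*\sO_X(D)$ and then using the irreducibility of the fibres to identify the remaining vertical contribution with a rational multiple of a fibre. First I would record the cohomological input: a general fibre $F$ is smooth and connected, so $h^0(F,\sO_X)=h^0(F,\sO_F)=1$, and the hypothesis $\sO_F(D)\simeq\sO_F$ gives $h^0(F,D|_F)=1$. By upper semicontinuity the generic fibre carries the same value, so $f_*\sO_X(D)$ is a torsion-free sheaf of rank one on the smooth curve $C$, hence a line bundle $L$.

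Next I would feed the adjunction counit $f^*L=f^*f_*\sO_X(D)\to\sO_X(D)$ into the argument. Twisting by $f^*L^{-1}$ and using $f_*\sO_X=\sO_C$ with the projection formula yields $H^0(X,\sO_X(D)\otimes f^*L^{-1})=H^0(C,\sO_C)=\C$, whose canonical generator is a nonzero section. It cuts out an effective divisor $E$ with $\sO_X(D)\simeq\sO_X(E)\otimes f^*L$. Restricting this section to a general fibre $F$ makes it the constant $1$ in $H^0(F,\sO_F)$, so $E$ does not meet $F$; thus $E$ is vertical, $E=\sum_j b_j\Gamma_j$ with each $\Gamma_j$ contained in a fibre.

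Here the hypothesis that every fibre is irreducible enters decisively. For each relevant $c\in C$ the reduced fibre $\Gamma_c=f^{-1}(c)_{\mathrm{red}}$ is the unique prime divisor inside $f^{-1}(c)$, and $f^*c=m_c\Gamma_c$ for some multiplicity $m_c\in\N$; hence $\Gamma_c\sim_\Q\frac{1}{m_c}f^*c$. Summing, $E\sim_\Q f^*N$ for a $\Q$-divisor $N$ on $C$, and therefore $D\sim_\Q f^*(L+N)$ is a pullback from $C$. When $C\simeq\PP^1$, as in the situation of Proposition \ref{proposition-dichotomy}, any $\Q$-divisor on $C$ is $\Q$-linearly equivalent to $\lambda\cdot\{\mathrm{pt}\}$ with $\lambda\in\Q$, so $f^*(L+N)\sim_\Q\lambda F$ and we obtain $D\sim_\Q\lambda F$.

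I expect the crux to be this last step. Without irreducibility a fibre could break into several prime components whose classes are not proportional to $f^*(\mathrm{pt})$, and then $D$ would only be pinned down modulo the full subgroup generated by fibre components; irreducibility is precisely what collapses that subgroup to $\Q\cdot F$ after clearing the multiplicities $m_c$. The only other point demanding care is the base-change assertion that the generic fibre inherits $h^0=1$, which is what guarantees that $f_*\sO_X(D)$ is genuinely of rank one rather than zero, so that the line bundle $L$ and the effective divisor $E$ above exist.
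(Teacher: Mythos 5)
Your proof is correct and follows essentially the same route as the paper: the paper likewise notes that $f_*\sO_X(D)$ is a torsion-free rank-one sheaf, hence a line bundle, uses the evaluation morphism $f^*f_*\sO_X(D)\to\sO_X(D)$ (your canonical section of $\sO_X(D)\otimes f^*L^{-1}$ is exactly this map, repackaged via the projection formula) to write $D = f^*L + E$ with $E$ effective and supported on fibres, and then invokes the irreducibility of all fibres to conclude $E \sim_\Q f^*M$. If anything you are slightly more careful than the paper on the last step, making explicit that the passage from $D \sim_\Q f^*(\text{class on }C)$ to the stated conclusion $D \sim_\Q \lambda F$ uses $C \simeq \PP^1$, which is the situation in which the lemma is applied in Proposition \ref{proposition-dichotomy}.
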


\begin{proof}
Since $\sO_F(D) \simeq \sO_F$ the direct image sheaf $f_* \sO_X(D)$ is torsion-free of rank one, so locally free. Moreover the evaluation morphism
$$
f^* f_* \sO_X(D) \rightarrow \sO_X(D)
$$
is surjective on the general fibre, so we can write
$D= f^* f_* \sO_X(D) + E$, where $E$ is an effective divisor supported on fibres of $f$. 
Since $f$ has irreducible fibres we have $E \sim_\Q f^* M$ for some $\Q$-divisor class $M$ on $C$. 
\end{proof}

Note that Lemma \ref{lemmaeffirreduciblefibres} immediately implies Proposition \ref{proposition-dichotomy} in the case $\rk \sV=1$, so in the next proof we can suppose $\rk \sV>1$:

\begin{proof}[Proof of Proposition \ref{proposition-dichotomy}] 
We use the notation introduced in Remark \ref{remark-meaning}.
We argue by contradiction and assume that 
$\zeta$ is not nef in codimension one. Thus there exists 
a prime divisor $Z \subset P$ such that the restriction $\zeta|_Z$ is not pseudoeffective. Moreover $Z$ is not $\pi$-exceptional
(cf. Remark \ref{remark-meaning}), so $\pi(Z)$ has codimension at most one.

{\em 1st case. Assume that $\pi(Z)$ is contained in an $f$-fibre.}
Since we assume that $f$ has irreducible fibres, this implies that set-theoretically $\pi(Z)=\fibre{f}{y}$ for some $y \in Y$ and hence
$$
(f \circ \pi)^* y = Z + E
$$
where $E$ is a union of $\pi$-exceptional divisors. The restriction of $\zeta$
to any irreducible component of $E$ is not pseudoeffective (cf. again
Remark \ref{remark-meaning}), so the restriction of  $\zeta$ to every irreducible component of $
(f \circ \pi)^* y$ is not pseudoeffective. Yet by assumption $\sV \otimes \sO_F \simeq
\sO_F^{\oplus \rk \sV}$, so the restriction of $\zeta$ to a general $f \circ \pi$-fibre
is effective. Since $f \circ \pi$ is flat and $\sO_P(\zeta)$ is locally free we obtain by semicontinuity that $\zeta$ has a non-zero section on $(f \circ \pi)^* y$. In particular the restriction to at least one irreducible component is pseudoeffective, a contradiction.

{\em 2nd case. Assume that $\pi(Z)$ is not contained in an $f$-fibre.}
Let
\begin{equation} \label{help2}
\zeta = P + (\mu Z + E)
\end{equation}
be the divisorial Zariski decomposition of $\zeta$ where $P$ is the positive part. Since $\zeta|_Z$ is not pseudoeffective, we have $\mu>0$.

Let $F$ be a general $f$-fibre. Since  $\sV \otimes \sO_F \simeq \sO_F^{\oplus \rk \sV}$ we have
$$
\PP(\sV \otimes \sO_F) \simeq F \times \PP^{\rk \sV-1}
$$ 
and the restriction of $\zeta$ to  $\PP(\sV \otimes \sO_F)$ is 
isomorphic to $p_{\PP^{\rk \sV-1}}^* H$ where $H$ is a hyperplane
on $\PP^{\rk \sV-1}$. In particular
$\sO_{F \times t}(\zeta)$ is trivial for for every $t \in \PP^{\rk \sV-1}$.

Since $\pi(Z)$ is not contained in an $f$-fibre, the intersection of
$Z$ with $\PP(\sV \otimes \sO_F)$ is not empty. Thus restricting \eqref{help2} 
to $\PP(\sV \otimes \sO_F)$, we obtain that
\begin{equation} \label{help3}
Z \cap \PP(\sV \otimes \sO_F) \sim p_{\PP^{\rk \sV-1}}^* (\lambda H)
\end{equation} 
for some $\lambda>0$. In particular this implies that $\pi(Z)=X$. 

We now claim that we can write
$$
[Z] \sim \lambda \zeta + \pi^* M + E,
$$
where $M$ is a Cartier divisor class on $X$, and $E$ is a not necessarily effective $\pi$-exceptional divisor.
In fact the reflexive sheaf $\sV \rightarrow X$ is locally free in codimension two, 
so if $B \subset X$ is the locus where $\sV$ is not locally free, the 
preimage $\fibre{\pi}{B}$ consists of $\pi$-exceptional divisors.
Since 
$$
\mbox{Pic}(\PP(\sV \otimes \sO_{X \setminus B})) \simeq \pi^* \mbox{Pic}(X \setminus B)
\oplus \Z \zeta \simeq
\pi^* \mbox{Pic}(X)
\oplus \Z \zeta,
$$
the claim follows from the usual exact sequence \cite[II,Prop.6.5]{Har77}.

Restricting over a general fibre $F$, we obtain from \eqref{help3} that
$$
p_{\PP^{\rk \sV-1}}^* \sO_{\PP^{\rk \sV-1}}(\lambda)
\simeq
\sO_{\PP(\sV \otimes \sO_F)}(Z)
\simeq
(p_{\PP^{\rk \sV-1}}^* \sO_{\PP^{\rk \sV-1}}(1))^{\otimes \lambda}
\otimes \pi^* \sO_F(M),
$$
so $\sO_F(M) \simeq \sO_F$. Since $f$ has irreducible fibres, we know by Lemma \ref{lemmaeffirreduciblefibres} that $M \sim_\Q f^* M_Y$ with $M_Y$ a $\Q$-divisor class on $\PP^1$. In conclusion we obtain that $Z$ defines a section of
$$
S^{[\lambda]} \sV \otimes \sO_X(f^* M_Y)
$$
over $X \setminus B$. Since $B$ has codimension at least two and $S^{[\lambda]} \sV$ is reflexive, this section extends to $X$. 
Thus by construction of the tautological class, cf. \eqref{push-tautological}, we obtain a global section of $\sO_P(\lambda \zeta+(f \circ \pi)^* M_Y)$ which vanishes on $Z+E'$ with $E'$ an {\em effective} $\pi$-exceptional divisor.

If $\deg M_Y  \leq 0$, we are already done: for some $d \in \N$ the $\Q$-divisor class $d M_Y$ is integral and anti-effective, so 
$$
d \lambda \zeta \sim_\Q d \left(Z+E'+(f \circ \pi)^* M_Y^* \right)
$$
is effective.

If $\deg M_Y>0$ we know by Proposition \ref{proposition-base-positivity} that $\kappa(X, S^{[\lambda]} \sV \otimes \sO_X(f^* M_Y)) \geq \rk \sV-1 \geq 1$.
If two sections of a symmetric power of $S^{[\lambda]} \sV \otimes \sO_X(f^* M_Y)$
coincide over $X \setminus B$, they coincide over $X$. Since $Z$ is the unique
irreducible component of 
$$
\lambda \zeta+(f \circ \pi)^* M_Y  \sim_\Q Z+E
$$
that is not $\pi$-exceptional, we obtain that $\kappa(P, Z) \geq 1$. 
Yet $Z$ is in the negative part of the divisorial Zariski decomposition of $\zeta$,
so $\kappa(P, Z)=0$ \cite[Prop.3.13]{Bou04}. This gives the final contradiction.
\end{proof}


\begin{thebibliography}{BHPVdV04}

\bibitem[Ati57]{Ati57}
Michael~F. Atiyah.
\newblock Vector bundles over an elliptic curve.
\newblock {\em Proc. Lond. Math. Soc. (3)}, 7:414--452, 1957.

\bibitem[BD18]{BD18}
Damian Brotbek and Lionel Darondeau.
\newblock Complete intersection varieties with ample cotangent bundles.
\newblock {\em Invent. Math.}, 212(3):913--940, 2018.

\bibitem[Bea83]{Katata}
Arnaud Beauville.
\newblock Some remarks on {K{\"a}hler} manifolds with {{\(c_ 1=0\)}}.
\newblock Classification of algebraic and analytic manifolds, {Proc}. {Symp}.,
  {Katata}/{Jap}. 1982, {Prog}. {Math}. 39, 1-26 (1983)., 1983.

\bibitem[BHPVdV04]{BHPV04}
Wolf~P. Barth, Klaus Hulek, Chris A.~M. Peters, and Antonius Van~de Ven.
\newblock {\em Compact complex surfaces}, volume~4 of {\em Ergebnisse der
  Mathematik und ihrer Grenzgebiete. 3. Folge.}
\newblock Springer-Verlag, Berlin, second edition, 2004.

\bibitem[Bou04]{Bou04}
S{\'e}bastien Boucksom.
\newblock Divisorial {Z}ariski decompositions on compact complex manifolds.
\newblock {\em Ann. Sci. \'Ecole Norm. Sup. (4)}, 37(1):45--76, 2004.

\bibitem[Bru06]{Bru06}
Marco Brunella.
\newblock A positivity property for foliations on compact {K}\"{a}hler
  manifolds.
\newblock {\em Internat. J. Math.}, 17(1):35--43, 2006.

\bibitem[BTVA22]{BTVA22}
Nils Bruin, Jordan Thomas, and Anthony V{\'a}rilly-Alvarado.
\newblock Explicit computation of symmetric differentials and its application
  to quasihyperbolicity.
\newblock {\em Algebra Number Theory}, 16(6):1377--1405, 2022.

\bibitem[Cao15]{Cao15}
Junyan Cao.
\newblock On the approximation of {K}{\"a}hler manifolds by algebraic
  varieties.
\newblock {\em Math. Annal.}, 363(1):393--422, 2015.

\bibitem[CC14]{CC14}
Fr{\'e}deric Campana and Beno{\^{\i}}t Claudon.
\newblock Abelianity conjecture for special compact {K}\"ahler 3-folds.
\newblock {\em Proc. Edinb. Math. Soc. (2)}, 57(1):55--78, 2014.

\bibitem[CCM21]{CCM21}
Fr{\'e}d{\'e}ric Campana, Junyan Cao, and Shin-ichi Matsumura.
\newblock Projective klt pairs with nef anti-canonical divisor.
\newblock {\em Algebr. Geom.}, 8(4):430--464, 2021.

\bibitem[CCP19]{CCP19}
Fr{\'e}d{\'e}ric Campana, Junyan Cao, and Mihai Paun.
\newblock Subharmonicity of direct images and applications.
\newblock {\em arXiv preprint}, 2019.

\bibitem[CP17]{CP17}
Junyan Cao and Mihai P{\v a}un.
\newblock Kodaira dimension of algebraic fiber spaces over abelian varieties.
\newblock {\em Invent. Math.}, 207:345--387, 2017.

\bibitem[CP19]{CP19}
Fr{\'e}d{\'e}ric Campana and Mihai P{\u{a}}un.
\newblock Foliations with positive slopes and birational stability of orbifold
  cotangent bundles.
\newblock {\em Publ. Math., Inst. Hautes {\'E}tud. Sci.}, 129:1--49, 2019.

\bibitem[Deb01]{Deb01}
Olivier Debarre.
\newblock {\em Higher-dimensional algebraic geometry}.
\newblock Universitext. Springer-Verlag, New York, 2001.

\bibitem[Dem92a]{Dem92}
Jean-Pierre Demailly.
\newblock Regularization of closed positive currents and intersection theory.
\newblock {\em J. Algebraic Geom.}, 1(3):361--409, 1992.

\bibitem[Dem92b]{Dem92b}
Jean-Pierre Demailly.
\newblock Singular {Hermitian} metrics on positive line bundles.
\newblock In {\em Complex algebraic varieties. Proceedings of a conference,
  held in Bayreuth, Germany, April 2-6, 1990}, pages 87--104. Berlin etc.:
  Springer-Verlag, 1992.

\bibitem[Dem12]{Dem12}
Jean-Pierre Demailly.
\newblock {\em Analytic methods in algebraic geometry}, volume~1 of {\em
  Surveys of Modern Mathematics}.
\newblock International Press, Somerville, MA; Higher Education Press, Beijing,
  2012.

\bibitem[Den21]{Den21}
Ya~Deng.
\newblock Simpson correspondence for semistable higgs bundles over kähler
  manifolds.
\newblock 17(5):1899--1911, 2021.

\bibitem[DPS94]{DPS94}
Jean-Pierre Demailly, Thomas Peternell, and Michael Schneider.
\newblock Compact complex manifolds with numerically effective tangent bundles.
\newblock {\em J. Algebraic Geom.}, 3(2):295--345, 1994.

\bibitem[DPS01]{DPS01}
Jean-Pierre Demailly, Thomas Peternell, and Michael Schneider.
\newblock Pseudo-effective line bundles on compact {K}\"ahler manifolds.
\newblock {\em Internat. J. Math.}, 12(6):689--741, 2001.

\bibitem[Dru18]{Dru18}
St\'{e}phane Druel.
\newblock A decomposition theorem for singular spaces with trivial canonical
  class of dimension at most five.
\newblock {\em Invent. Math.}, 211(1):245--296, 2018.
\newblock doi: 10.1007/s00222-017-0748-y.

\bibitem[Fuj11]{Fuj11}
Osamu Fujino.
\newblock Semi-stable minimal model program for varieties with trivial
  canonical divisor.
\newblock {\em Proc. Japan Acad., Ser. A}, 87(3):25--30, 2011.

\bibitem[Har77]{Har77}
Robin Hartshorne.
\newblock {\em Algebraic geometry}.
\newblock Springer-Verlag, New York, 1977.
\newblock Graduate Texts in Mathematics, No. 52.

\bibitem[HLS22]{HLS22}
Andreas H\"{o}ring, Jie Liu, and Feng Shao.
\newblock Examples of {F}ano manifolds with non-pseudoeffective tangent bundle.
\newblock {\em J. Lond. Math. Soc. (2)}, 106(1):27--59, 2022.

\bibitem[HP19]{HP19}
Andreas H\"{o}ring and Thomas Peternell.
\newblock Algebraic integrability of foliations with numerically trivial
  canonical bundle.
\newblock {\em Invent. Math.}, 216(2):395--419, 2019.

\bibitem[HP20]{HP20}
Andreas H\"{o}ring and Thomas Peternell.
\newblock A nonvanishing conjecture for cotangent bundles.
\newblock {\em arXiv preprint, to appear in {A}nnales de la {F}acult{\'e} des
  {S}ciences de {T}oulouse}, 2020.

\bibitem[KM98]{KM98}
J{\'a}nos Koll{\'a}r and Shigefumi Mori.
\newblock {\em Birational geometry of algebraic varieties}, volume 134 of {\em
  Cambridge Tracts in Mathematics}.
\newblock Cambridge University Press, Cambridge, 1998.
\newblock With the collaboration of C. H. Clemens and A. Corti.

\bibitem[Kod63]{Kod}
K.~Kodaira.
\newblock On compact analytic surfaces. {II}, {III}.
\newblock {\em Ann. of Math. (2) 77 (1963), 563--626; ibid.}, 78:1--40, 1963.

\bibitem[Kol95]{Kol95}
J{\'a}nos Koll{\'a}r.
\newblock {\em Rational curves on algebraic varieties}, volume~32 of {\em
  Ergeb. Math. Grenzgeb., 3. Folge}.
\newblock Berlin: Springer-Verlag, 1995.

\bibitem[Laz04]{Laz04b}
Robert Lazarsfeld.
\newblock {\em Positivity in algebraic geometry. {II}}, volume~49 of {\em
  Ergebnisse der Mathematik und ihrer Grenzgebiete.}
\newblock Springer-Verlag, Berlin, 2004.
\newblock Positivity for vector bundles, and multiplier ideals.

\bibitem[LOY20]{LOY20}
Jie Liu, Wenhao Ou, and Xiaokui Yang.
\newblock Projective manifolds whose tangent bundle contains a strictly nef
  subsheaf.
\newblock {\em arXiv preprint 2004.08507, to appear in JAG}, 2020.

\bibitem[LP18]{LP18}
Vladimir Lazi{\'c} and Thomas Peternell.
\newblock Abundance for varieties with many differential forms.
\newblock {\em {\'E}pijournal de G{\'e}om. Alg{\'e}br., EPIGA}, 2:35, 2018.
\newblock Id/No 1.

\bibitem[Nak04]{Nak04}
Noboru Nakayama.
\newblock {\em Zariski-decomposition and abundance}, volume~14 of {\em MSJ
  Memoirs}.
\newblock Mathematical Society of Japan, Tokyo, 2004.

\bibitem[P{\v a}u16]{Pau16}
Mihai P{\v a}un.
\newblock Singular hermitian metrics and positivity of direct images of
  pluricanonical bundles.
\newblock {\em https://arxiv.org/pdf/1706.08814.pdf}, 2016.

\bibitem[Pet12]{Pet12}
Thomas Peternell.
\newblock Varieties with generically nef tangent bundles.
\newblock {\em J. Eur. Math. Soc. (JEMS)}, 14(2):571--603, 2012.

\bibitem[PT18]{PT18}
Mihai P{\u{a}}un and Shigeharu Takayama.
\newblock Positivity of twisted relative pluricanonical bundles and their
  direct images.
\newblock {\em J. Algebr. Geom.}, 27(2):211--272, 2018.

\bibitem[Sak79]{Sak79}
Fumio Sakai.
\newblock Symmetric powers of the cotangent bundle and classification of
  algebraic varieties.
\newblock In {\em Algebraic geometry ({P}roc. {S}ummer {M}eeting, {U}niv.
  {C}openhagen, {C}openhagen, 1978)}, volume 732 of {\em Lecture Notes in
  Math.}, pages 545--563. Springer, Berlin, 1979.

\bibitem[Shi21]{Shi21}
Takahiro Shibata.
\newblock Q-abelian and {$\mathbb Q$}-{F}ano finite quotients of abelian
  varieties.
\newblock {\em arXiv preprint 2112.09826}, 2021.

\bibitem[Sim92]{Sim92}
Carlos~T. Simpson.
\newblock Higgs bundles and local systems.
\newblock {\em Inst. Hautes \'Etudes Sci. Publ. Math.}, (75):5--95, 1992.

\bibitem[Wu22]{Wu22}
Xiaojun Wu.
\newblock Pseudo-effective and numerically flat reflexive sheaves.
\newblock {\em The Journal of Geometric Analysis}, 32, 2022.

\bibitem[Xie18]{Xie18}
Song-Yan Xie.
\newblock On the ampleness of the cotangent bundles of complete intersections.
\newblock {\em Inventiones mathematicae}, 212:941–996, 2018.

\end{thebibliography}
\end{document}